\documentclass[12pt,oneside]{amsart}
\usepackage[margin=1in]{geometry}
\usepackage{amsmath,amssymb,amsthm,enumitem}
\usepackage[hidelinks,hypertexnames=false]{hyperref}
\setlist{itemsep=.06125in}
\numberwithin{equation}{section}
\theoremstyle{plain}
\newtheorem{theorem}{Theorem}[section]
\newtheorem{lemma}[theorem]{Lemma}
\newtheorem{corollary}[theorem]{Corollary}
\newtheorem{proposition}[theorem]{Proposition}
\theoremstyle{definition}
\newtheorem{definition}[theorem]{Definition}
\theoremstyle{remark}
\newtheorem{remark}[theorem]{Remark}
\newtheorem{notation}[theorem]{Notation}
\newtheorem*{theoremA}{Theorem A}
\newtheorem*{theoremB}{Theorem B}
\newtheorem*{theoremC}{Theorem C}
\newcommand{\R}{\mathbb{R}}
\newcommand{\N}{\mathbb{N}}
\newcommand{\Z}{\mathbb{Z}}
\newcommand{\eps}{\varepsilon}

\date{}

\title{Point Configurations in Brownian Traces}

\author{P. Kosenko}
\address{Department of Mathematics, University of British Columbia, Vancouver, BC, Canada}
\email{pkosenko@math.ubc.ca}

\author{J.M. Medina}
\address{Universidad de Buenos Aires, IAM-CONICET, and Universidad Cat\'{o}lica Argentina}
\email{jmedina@conicet.gov.ar}

\author{A. Yavicoli}
\address{Department of Mathematics, University of British Columbia, Vancouver, BC, Canada}
\email{yavicoli@math.ubc.ca}

\subjclass[2020]{Primary 28A78; Secondary 28A80, 60J65, 60J45}
\keywords{Brownian motion, random fractals, finite point configurations,
arithmetic progressions, Hausdorff dimension, homothetic and similar copies}

\begin{document}

\begin{abstract}
We study finite and countable point configurations in the trace of Brownian
motion in $\R^d$, $d\geq2$.  In the plane, the set of translations placing a
prescribed finite configuration on the trace has Hausdorff dimension two in
every nonempty open set; we prove analogous dimension-preservation results
for radial and bilipschitz patterns, and every prescribed countable
configuration has a dense translation set.  For $d\geq3$ we prove a sharp
occurrence criterion for regular scale-invariant families of configurations,
in terms of the ranks of all labelled subconfigurations.  It gives exact
thresholds for arithmetic progressions and for homothetic and similar copies
of finite sets.  In particular we recover and extend a result of Benjamini
and Kozma \cite{BK}: almost surely the longest arithmetic progression in the
trace has length five in $\R^3$, three in $\R^4$ and $\R^5$, and two in
$\R^d$ for $d\geq6$.
\end{abstract}

\maketitle

\tableofcontents

\section{Introduction}
\label{section:introduction}

Every set of positive Lebesgue measure in $\R^d$ contains a homothetic copy
$a+tF$, $t>0$, of each prescribed finite set $F$: restricting to a subset
$E$ of finite positive measure, continuity of translations in $L^1$ gives
$\int\prod_{f\in F}\mathbf 1_E(a+tf)\,da\to\operatorname{Leb}(E)>0$ as
$t\downarrow0$.  For Lebesgue-null sets, size alone does not decide whether
a pattern occurs.  Falconer constructed, for each nondegenerate triangle, a
compact planar set of full Hausdorff dimension containing no similar copy of
it \cite{FalconerAvoid}; Keleti constructed compact subsets of
$\R$ of full dimension avoiding similar copies of any countable family of
patterns with at least three points \cite[Cor.~1.2]{Keleti}; polynomial,
nonlinear and linear avoidance results are due to M\'ath\'e, Fraser and
Pramanik, and Yavicoli \cite[Thm.~2.3]{Mathe},
\cite[Thms.~1.1--1.3]{FraserPramanikAvoid}, \cite[Thm.~A]{YavicoliAvoid}.
Conversely, very thin perfect sets can contain every finite polynomial
pattern, or arbitrarily long arithmetic progressions
\cite[Thms.~3.8--3.9]{MolterYavicoli}, \cite[Thm.~1.1]{Lai}.  Fourier
dimension does not decide the question either: Shmerkin constructed a
full-dimensional Salem subset of $\R/\Z$ without three-term arithmetic
progressions \cite[Thm.~B]{ShmerkinAP}, and Denson extended this to
nonlinear configurations \cite[Thms.~1.2--1.4]{Denson}.

Positive results need additional structure.  {\L}aba and Pramanik proved
that a measure on $[0,1]$ with sufficiently large Hausdorff dimension, in
the sense of a Frostman condition, and sufficiently fast Fourier decay
supports a three-term progression \cite[Thm.~1.2]{LabaPramanik}; this was
extended to more general linear and polynomial configurations
\cite[Thm.~1.6]{ChanLabaPramanik}, \cite[Thm.~1.1]{FraserGuoPramanik}, and
under related hypotheses Carnovale and Senger showed that the set of
lengths of common differences of three-term progressions in the support
has positive Hausdorff measure in an explicit range of dimensions, and
positive Lebesgue measure in dimension $d\ge2$
\cite[Cor.~1.6]{CarnovaleSenger}.  Large Hausdorff dimension alone forces
certain curved configurations \cite[Thm.~1]{KucaOrponenSahlsten},
\cite[Thm.~1.2]{Zhu}, and large Newhouse thickness forces long
progressions and homothetic copies of finite sets
\cite[Thm.~4]{YavicoliThick}.

Random fractals are a natural setting for sharp answers.  Shmerkin and
Suomala found thresholds for finite patterns, angles, distances and simplex
volumes in fractal percolation \cite[Thm.~1.1]{ShmerkinSuomala}: for a
fractal-percolation set in $\R^d$ of almost-sure dimension $s$, conditional
on nonextinction, a homothetic copy of a fixed $k$-point set, $k\ge2$,
occurs when
\[
 k(d-s)<d+1
\]
\cite[Cor.~5.9]{ShmerkinSuomala}, and the reverse inequality gives
almost-sure avoidance of each fixed pattern
\cite[Cor.~5.14]{ShmerkinSuomala}.  With $s=2$ this is exactly the
homothetic threshold $k(d-2)<d+1$ obtained below for Brownian traces.  The
Brownian trace is nevertheless a very different random fractal: it is
generated by a continuous path, has a different dependence and intersection
structure, and has Hausdorff dimension two in every ambient dimension
$d\geq2$.  Beyond this numerical parallel, we show that it admits a sharp
criterion, in terms of subconfiguration ranks, for general scale-invariant
families of patterns.

Let $W=(W(t))_{t\geq0}$ be Brownian motion in $\R^d$ started at the origin
and let $B=W([0,\infty))$ be its trace.  The following facts hold almost
surely.  In every dimension $d\geq2$, $\dim_HB=2$ \cite[Thm.~4.29(b)]{PM}.
In the plane, $B$ is dense by neighbourhood recurrence
\cite[Thm.~3.20 and Rem.~3.21]{PM} and Lebesgue null \cite[Thm.~2.24]{PM},
hence not closed.  For $d\geq3$, transience gives $|W(t)|\to\infty$
\cite[Thm.~3.20]{PM}, so $B$ is unbounded and, by path continuity, closed.
Planar recurrence and spatial transience thus lead to different questions
about patterns in the trace.

Benjamini and Kozma proved that the planar trace contains arithmetic
progressions of every finite length, and that the trace in $\R^3$ contains
five-term progressions but no six-term progression
\cite[Prop.~1 and Lemmas~2 and 4]{BK}.  In the plane we describe the
parameter sets of these progressions and treat general finite and countable
configurations.  In space we determine the maximal length in every dimension
and place it within a criterion for general scale-invariant families.

\medskip\noindent\emph{The plane.}\ 
Our main planar input is the dimension formula
\[
 \dim_H(E\cap B)=\dim_H E\qquad\text{almost surely}
\]
for every fixed Borel set $E\subseteq\R^2$ (Lemma~\ref{Lemma:dimBrownie}).
It follows from Hawkes' preimage theorem \cite{Hawkes}, in the form
\cite[Eq.~(1.9)]{KX}, which gives the dimension of the time set
$\{t:W(t)\in E\}$, and from Kaufman's uniform dimension-doubling theorem
\cite{Kaufman}, \cite[Thm.~9.28]{PM}, which applies simultaneously to all
time sets and hence to this random one.  To place several prescribed points
on the trace we reveal the path up to a stopping time and impose the next
condition on its continuation, which by the strong Markov property
\cite[Thm.~2.16]{PM} is a new Brownian motion independent of the past; each
condition can be imposed without decreasing the Hausdorff dimension of the
parameter set (Lemma~\ref{lem:intersection}).  We
apply this to patterns $x,\lambda_1x,\ldots,\lambda_kx$ with $x$ in a
radial set (Theorem~\ref{thm:radial}), to sets linked by bilipschitz maps
(Theorem~\ref{theo:bilip}) and to translates: for every finite
$F\subset\R^2$, almost surely
\[
 \dim_H(T_F\cap U)=2,
 \qquad
 T_F:=\{a\in\R^2:a+F\subseteq B\},
\]
for every nonempty open $U\subseteq\R^2$
(Corollary~\ref{cor:planartranslates}).

For arithmetic progressions $a,a+\vec d,\ldots,a+(n-1)\vec d$ we also vary
the common difference, or gap vector, $\vec d$.  For every $n\geq2$, the set
of pairs $(a,\vec d)$ giving an $n$-term progression on $B$ has Hausdorff
dimension four in every nonempty open subset of $\R^4$; moreover, on one
probability-one event, for Lebesgue-almost every $\vec d$ and every
$n\geq2$ the set $\{a\in U:a+j\vec d\in B\text{ for }0\leq j<n\}$ has
Hausdorff dimension two for every nonempty open $U\subseteq\R^2$
(Theorem~\ref{thm:planarAdim}).  The starting point may depend on $n$, so
this does not produce an infinite progression.  Infinitely many conditions
are handled by the same construction: for every fixed countable
$F\subseteq\R^2$, $T_F$ is almost surely dense
(Theorem~\ref{prop:countable}).  Taking $F=\{j\vec d:j\geq0\}$, for each
fixed $\vec d\neq0$ there is almost surely a dense set of starting points of
infinite progressions with common difference $\vec d$, and on one
probability-one event this holds for Lebesgue-almost every $\vec d$.  The
parameter set $\mathcal A_\infty$ of nondegenerate infinite progressions is
dense in $\R^4$ and satisfies $2\leq\dim_H\mathcal A_\infty\leq4$
(Theorem~\ref{thm:infiniteAP}).

\medskip\noindent\emph{Dimensions $d\geq3$.}\ 
The one-ball hitting formula \cite[Cor.~3.19]{PM} says that a Brownian path
starting at distance comparable to one from a ball of radius $\eps$ hits it
with probability comparable to $\eps^{d-2}$.  Requiring the path to enter
$l$ well-separated prescribed balls therefore costs a factor comparable to
$\eps^{l(d-2)}$; this is the source of the exponent $d-2$ below.  (A
different expression of the two-dimensionality of the spatial trace is the
capacity equivalence of Pemantle, Peres and Shapiro: $W([0,1])$ has positive
$f$-capacity for exactly the same radial kernels $f$ as the unit square
\cite[Thm.~1.1]{PPS}.)

From dimension three on, occurrence of a family of patterns is governed by
the number of parameters of the whole pattern and of each of its labelled
subpatterns.  Locally label the points and parametrise the family by a
$C^1$ map
\[
 \Phi=(\phi_1,\ldots,\phi_k):
 U\subseteq\R^D\longrightarrow\operatorname{Conf}_k(\R^d),
\]
where $\operatorname{Conf}_k(\R^d)$ is the space of ordered $k$-tuples of
distinct points.  When $\Phi$ is a local embedding, $D$ is the number of
independent real parameters needed to place the pattern; it is not the
affine or Hausdorff dimension of the underlying finite set.  For $k$-term
arithmetic progressions,
$\Phi(a,\vec d)=(a,a+\vec d,\ldots,a+(k-1)\vec d)$ with
$(a,\vec d)\in\R^d\times(\R^d\setminus\{0\})$, so $D=2d$.  For the positive
homothetic copies $\Phi(a,t)=(a+tf_i)_{i=1}^k$, $a\in\R^d$, $t>0$, of a
fixed set $F=\{f_1,\ldots,f_k\}$ of $k\ge2$ distinct points, $D=d+1$.

For a $C^1$ map $G$ we write $DG(x)$ for its differential at $x$ and, when
the rank is constant, $\operatorname{rank}DG$ for this rank.  For a nonempty
set of labels $I$, let $\Phi_I=(\phi_i)_{i\in I}$ record the positions of
the points labelled by $I$, and put
\[
 r_I:=\operatorname{rank}D\Phi_I,
 \qquad
 \Delta_I:=r_I-(d-2)\,\# I .
\]
When the rank is constant, $r_I$ is the local dimension of the possible
placements of the selected points, and the \emph{excess} $\Delta_I$ compares
it with the probabilistic cost $(d-2)\,\# I$ of placing them on the trace.
For the full label set $I_0=\{1,\ldots,k\}$ we have $r_{I_0}=D$, and we call
$\Delta:=\Delta_{I_0}=D-k(d-2)$ the \emph{full excess}.  A positive full
excess is not sufficient: a proper subpattern may vary in too few dimensions
and have $\Delta_I\leq0$, and since every occurrence of the full pattern
realises that subpattern, it prevents the full pattern from occurring.

A family is \emph{scale invariant} if it is closed under
$(x_1,\ldots,x_k)\mapsto(\lambda x_1,\ldots,\lambda x_k)$ for every
$\lambda>0$.  We impose the following regularity, which excludes singular or
redundant parametrisations: the family is covered by countably many locally
labelled $C^1$ parametrisations $\Phi$ as above, each an embedding, each
$\phi_i$ of rank $d$, each $\Phi_I$ of constant rank on each local chart,
and, after relabelling if necessary, all charts have the same $D$ and the
same ranks $r_I$.  Dilation is one of the parameters, not a separate
coordinate.  Definition~\ref{def:atlas} calls this a \emph{rank-regular
fixed-rank scale atlas}; the progression, homothety and similarity families
satisfy it by direct rank computations.

\begin{theoremA}[sharp rank criterion; Theorem~\ref{thm:trichotomy} and
Corollary~\ref{cor:subcriterion}]
Let $d\geq3$ and let $\mathcal F$ be a fixed scale-invariant family of
$k$-point configurations in $\R^d$ with a rank-regular fixed-rank scale
atlas.  Almost surely the trace of Brownian motion contains a member of
$\mathcal F$ if
\[
 r_I>(d-2)\,\# I
 \qquad\text{for every nonempty }I\subseteq\{1,\ldots,k\}.
\]
If this inequality fails for some nonempty $I$, including at equality, then
almost surely the trace contains no member of $\mathcal F$.
\end{theoremA}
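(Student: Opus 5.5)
The plan has two directions: an avoidance argument by a first-moment count at small scales, and an occurrence argument by a second-moment estimate on a local chart. Both rely on the standard multi-ball hitting estimates for Brownian motion in $d\ge3$. The probability that $W$ enters each of $l$ balls of radius $\eps$ centred at points $y_1,\dots,y_l$, with mutual separation and distance from the origin bounded below, is comparable to $\eps^{l(d-2)}$. The implied constants depend on the separations through Green-function products $\sum_{\sigma}\prod_j|y_{\sigma(j)}-y_{\sigma(j-1)}|^{2-d}$, obtained by summing over the orders in which the balls are visited. This follows from the one-ball formula \cite[Cor.~3.19]{PM} and the strong Markov property \cite[Thm.~2.16]{PM}.

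\emph{Avoidance.} Suppose $r_I\le(d-2)\#I$ for some $I$. Any member of $\mathcal F$ on $B$ has its $I$-subconfiguration on $B$, so it suffices to show that $B$ a.s.\ contains no point of $\Phi_I(U)$ for each chart. Restrict to compact pieces on which the points of the configuration are separated and away from $0$; countably many such pieces cover everything. Since $\Phi_I$ has constant rank $r_I$, the image of a compact piece is contained in a finite union of $C^1$ images of $r_I$-dimensional balls. So at scale $\eps$ it is covered by $O(\eps^{-r_I})$ product-of-balls cells $\prod_{i\in I}B(y_i,C\eps)$. Each cell is hit with probability $O(\eps^{(d-2)\#I})$, so the expected number of hit cells is $O(\eps^{(d-2)\#I-r_I})$. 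When the inequality is strict this tends to $0$ and we are done.

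At equality the count is only $O(1)$, and I expect this to be the main obstacle. I would try to exploit scale invariance. Because the trace law is scale invariant, an occurrence at all has probability $0$ or $1$ by a Blumenthal-type 0--1 argument applied at small scales. Then I would show that occurrences inside a fixed annulus have probability bounded by $c\,\mathbb P(\text{hit in a slightly smaller set})$ with $c<1$. A cleaner route is a refined count: the cells actually hit in the critical case lie in a set whose number at scale $\eps$ is $o(\eps^{-r_I})$. This would follow from the $\eps^{d-2}$ estimate combined with the fact that the conditional hitting probability of a ball given a nearby hit tends to $1$. Concretely, I would show that the expected number of hit cells at scale $\eps$ that are ``isolated'' at scale $\eps/2$ tends to zero, in the style of the Benjamini--Kozma argument for $d=3$, six-term progressions.

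\emph{Occurrence.} Assume the strict inequality holds for all $I$. Fix a chart and a compact parameter set $K\subset U$ of positive $D$-dimensional measure, with $\Phi(K)$ separated and away from $0$. Let $Z_\eps=\int_K\mathbf 1\{W\text{ hits }B(\phi_i(u),\eps)\ \forall i\}\,du$. Then $\mathbb E Z_\eps\asymp\eps^{k(d-2)}$. For the second moment, $\mathbb E Z_\eps^2$ is an integral over pairs $(u,v)$. The path must hit the $2k$ balls, and for the subset $I$ of labels where $\phi_i(u)$ and $\phi_i(v)$ are within $\rho$, the balls merge. The Green-product bound gives roughly $\eps^{2k(d-2)}\prod_{i\in I}\rho^{-(d-2)}$. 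The measure of the set of $v$ with $|\phi_i(u)-\phi_i(v)|\lesssim\rho$ for $i\in I$ is $\lesssim\rho^{r_I}$, by the constant rank of $\Phi_I$ and the embedding property. A dyadic sum in $\rho$ converges precisely because $r_I>(d-2)\#I$. Hence $\mathbb E Z_\eps^2\lesssim(\mathbb E Z_\eps)^2$, and Paley--Zygmund gives a positive probability, uniform in $\eps$, of near-occurrences. Since $B$ is closed, compactness turns these into an exact occurrence with positive probability. Finally, Brownian scaling together with scale invariance of $\mathcal F$ lets us apply this at scales $2^{-n}$. By the Blumenthal 0--1 law (or independence of increments along disjoint dyadic time windows), occurrence then happens almost surely.
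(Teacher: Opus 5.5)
\textbf{Main gap: the critical case.} The gap is the case $r_I=(d-2)\,\#I$, which the statement explicitly covers. Your first-moment count handles strict inequality correctly. At equality it gives only $O(1)$ expected hit cells, and none of the routes you sketch closes the argument:
\begin{itemize}
\item a $0$--$1$ law only says the answer is $0$ or $1$;
\item the ``$c<1$'' comparison comes with no mechanism;
\item the claim that the conditional probability of hitting a ball given a nearby hit tends to $1$ fails in $d\ge3$ for balls at distance $\rho\gg\eps$. Given a hit of $B(y,\eps)$, the ball $B(y',\eps)$ with $|y-y'|=\rho$ is hit with probability about $(\eps/\rho)^{d-2}$.
\end{itemize}
What a refined count would actually need is that, conditionally on an exact occurrence, the number of hit cells at scale $\eps$ is of order $\log(1/\eps)$, which is the logarithmic divergence of the energy. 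Proving that requires conditioning on $\#I$ realising times that are not stopping times of one filtration, so it is genuinely a multiparameter problem.

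The paper resolves it with multiparameter potential theory, in three steps:
\begin{itemize}
\item For independent exponentially killed Brownian motions started from a Lebesgue density, Fitzsimmons--Salisbury bound the probability of hitting $E\subset(\R^d)^s$ by $2^{3s}\operatorname{Cap}_s(E)$, for the product kernel $\prod_iu_1(x_i-y_i)$.
\item Since $u_1(z)\gtrsim|z|^{-(d-2)}$, every probability measure on an $r$-dimensional $C^1$ patch with $r\le(d-2)s$ has energy $\gtrsim\int_0^1\rho^{r-(d-2)s-1}\,d\rho=\infty$. This diverges also at equality.
\item One trace is reduced to independent paths by cutting into disjoint time intervals and conditioning on endpoints. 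This gives independent Brownian bridges, which on interior subintervals are absolutely continuous with respect to Brownian motions started from a density (Lemmas~\ref{lem:productcapacity}--\ref{lem:onepathcapacity}).
\end{itemize}
Without this, or an equivalent multiparameter argument, the critical half of the criterion is unproved.

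\textbf{Smaller points.}
\begin{itemize}
\item \emph{Configurations through the origin.} Compact pieces away from the origin do not ``cover everything'': $0=W(0)\in B$, so such configurations need separate treatment. At most one label $j\in I$ can sit at the origin. Because $\psi_j$ is a submersion, the other labels range over an image of dimension $r_I-d$ against cost $(d-2)(\#I-1)$. That is excess $\Delta_I-2<0$, strictly subcritical even when $\Delta_I=0$ (Proposition~\ref{prop:obstruction}).
\item \emph{Second moment needs several scales.} One scale $\rho$ with one set $I$ of close labels does not bound the integrand. The paired-hit probability is about $\eps^{2k(d-2)}\prod_i\max(\rho_i,\eps)^{-(d-2)}$, with each label at its own distance $\rho_i$, and labels outside $I$ contribute unbounded factors. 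You need to order labels by distance and use the flag bound $\prod_jt_j^{r_{I_j}-r_{I_{j-1}}}$ on nested prefixes (Lemma~\ref{lem:measurebox}). Abel summation then turns the exponent into $\sum_j(n_j-n_{j+1})\Delta_{I_j}$ with $n_{k+1}=0$, so convergence uses positivity of every prefix excess (Lemma~\ref{lem:resonance}).
\item \emph{The probability-one upgrade is fine.} Your Blumenthal argument is sound and simpler than the paper's ergodicity lemma. By scaling, the events that a copy at scale $2^{-n}$ is realised by time $4^{-n}T$ all have the same positive probability, and their $\limsup$ lies in $\mathcal F_{0+}$.
\end{itemize}
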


In the applications below, positivity of the full excess implies positivity
for all proper subpatterns, so a single inequality has to be checked.  We
also prove a one-sided dimension bound: on each normalised rank chart
(Definition~\ref{def:family}) with $\Delta\geq0$, the set of parameters
producing patterns on the trace has
Hausdorff and upper Minkowski dimension at most $\Delta$
(Proposition~\ref{prop:realisedupper}).

For progressions, a single term has rank $d$ and any two distinct terms
determine $(a,\vec d)$, so every subpattern with at least two points has
rank $2d$.  All inequalities in Theorem~A therefore hold exactly when
$2d-k(d-2)>0$.

\begin{theoremB}[longest progression; Corollary~\ref{cor:APall}]
Almost surely, for $d\geq3$, the longest arithmetic progression in the trace
of Brownian motion in $\R^d$ has length $\lceil2d/(d-2)\rceil-1$: five in
$\R^3$, three in $\R^4$ and $\R^5$, and two in $\R^d$ for $d\ge6$.  Almost
surely the planar trace contains progressions of every finite length.
\end{theoremB}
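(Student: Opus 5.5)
The plan is to derive Theorem~B from Theorem~A for $d\geq3$ and from the planar results for $d=2$.

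\emph{Setup for $d\geq3$.} Fix $k\geq2$ and let $\mathcal F_k$ be the family of $k$-term progressions $(a,a+\vec d,\ldots,a+(k-1)\vec d)$ with $\vec d\neq0$. This family is scale invariant. It is covered by the single chart $\Phi(a,\vec d)=(a+j\vec d)_{j=0}^{k-1}$ on $\R^d\times(\R^d\setminus\{0\})$, together with the relabelling that reverses the order of the terms. The map $\Phi$ is an injective immersion, and it is proper onto its image away from $\vec d=0$, so it is an embedding.

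\emph{Rank computation.} Each coordinate $\phi_j(a,\vec d)=a+j\vec d$ has differential $(h,v)\mapsto h+jv$, which has rank $d$. For $I$ with $\#I\geq2$, pick $i\neq j$ in $I$. The differentials of $\phi_i$ and $\phi_j$ jointly determine $h$ and $v$, because $(i-j)v$ and then $h$ can be recovered. Hence $r_I=2d$, and this is constant. So the atlas is rank-regular of fixed rank, with $D=2d$.

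\emph{Reduction to one inequality.} The conditions of Theorem~A become
\[
d-(d-2)>0 \quad\text{for } \#I=1, \qquad 2d-(d-2)\#I>0 \quad\text{for } 2\leq\#I\leq k.
\]
The first holds trivially. For $\#I\geq2$ the left side decreases in $\#I$, so all conditions reduce to the single inequality $2d>k(d-2)$. By Theorem~A, the trace almost surely contains a $k$-term progression if $k<2d/(d-2)$, and almost surely contains none if $k\geq2d/(d-2)$.

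\emph{Longest length.} The largest integer $k$ with $k<2d/(d-2)$ is $\lceil 2d/(d-2)\rceil-1$. Intersecting countably many almost-sure events gives:
\begin{itemize}
\item a progression of length $k^*:=\lceil 2d/(d-2)\rceil-1$ exists;
\item no progression of length $k^*+1$ exists.
\end{itemize}
Any longer progression contains a $(k^*+1)$-term subprogression, so none exists either. Evaluating the formula:
\begin{itemize}
\item $d=3$: $2d/(d-2)=6$, so the length is $5$;
\item $d=4$: the ratio is $4$, so the length is $3$;
\item $d=5$: the ratio is $10/3$, so the length is $3$;
\item $d\geq6$: $2<2d/(d-2)\leq3$, so the length is $2$.
\end{itemize}
At $d=6$ the ratio equals $3$, and the equality case of Theorem~A excludes $3$-term progressions.

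\emph{The plane.} For each $n$, Corollary~\ref{cor:planartranslates} applied to $F=\{0,\vec d,\ldots,(n-1)\vec d\}$ with a fixed $\vec d\neq0$ gives almost surely a nonempty set $T_F$. Intersecting over $n\in\N$ yields progressions of every finite length. Theorem~\ref{thm:planarAdim} gives the same conclusion.

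\emph{Main obstacle.} The only step needing care is verifying the rank-regular atlas hypotheses of Definition~\ref{def:atlas}. In particular, one must handle the labelling convention: the progression is an unordered set, so both orderings must be covered. This is done by the reversal relabelling, which preserves all the ranks. The rest is arithmetic.
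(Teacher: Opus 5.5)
Your proposal is correct and follows the paper's route: the ranks $r_I=d$ for singletons and $r_I=2d$ otherwise (Lemma~\ref{lem:APnondeg}) reduce everything to the single inequality $2d>k(d-2)$, and Corollary~\ref{cor:subcriterion} together with Proposition~\ref{prop:obstruction} (which covers the critical cases $d=3,4,6$) gives Corollary~\ref{cor:APall}. For the plane you use Corollary~\ref{cor:planartranslates} for translates of a fixed progression instead of the paper's Corollary~\ref{cor:planarAP} (progressions from the origin in some direction), and either works.
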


Similarly, any two labels of a positive homothetic pattern determine $a$ and
$t$, so every subpattern with at least two points has rank $d+1$.  Hence,
for every fixed set of $k\geq2$ distinct points, the trace almost surely
contains a positive homothetic copy exactly when $k<(d+1)/(d-2)$
(Corollary~\ref{cor:homothetic}).

Finally we allow rotations.  For a finite set
$F=\{f_1,\ldots,f_k\}\subset\R^d$ let
\[
 m=\dim\operatorname{span}\{f_i-f_1:2\leq i\leq k\}
\]
be its affine dimension.  Translation and dilation contribute $d+1$
parameters and the rotation group $\binom d2$; after translating $f_1$ to
the origin, the rotations fixing the $m$-dimensional direction space of $F$
pointwise do not move the pattern and account for $\binom{d-m}{2}$
redundant parameters.  Hence the number of independent parameters of a
similar copy is
\[
 D_m=d+1+\binom d2-\binom{d-m}{2}
     =d+1+\tfrac12m(2d-m-1).
\]
Allowing reflections adds finitely many choices and does not change this
dimension.

\begin{theoremC}[similar copies; Theorem~\ref{thm:similar}]
Let $d\ge3$ and let $F\subset\R^d$ be a set of $k\geq2$ distinct points of
affine dimension $m\geq1$.  Almost surely the trace of Brownian motion
contains a similar copy of $F$ if $k<D_m/(d-2)$, and almost surely contains
none if $k\geq D_m/(d-2)$.
\end{theoremC}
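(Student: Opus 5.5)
The plan is to deduce Theorem~C from Theorem~A by checking its hypotheses for the family $\mathcal F$ of similar copies of $F$ and computing all ranks $r_I$.

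\textbf{Atlas.} Parametrise similar copies by $\Phi(a,t,R)=(a+tR(f_i-f_1))_{i=1}^k$ with $a\in\R^d$, $t>0$ and $R\in O(d)$, taking the finitely many components for reflections. Quotient $O(d)$ by the pointwise stabiliser of the direction space $V=\operatorname{span}\{f_i-f_1\}$, which is a copy of $O(d-m)$. The quotient is the Stiefel manifold $V_m(\R^d)$ of orthonormal $m$-frames, of dimension $\tfrac12 m(2d-m-1)$. Cover it by countably many local sections. The resulting map $(a,t,\text{frame})\mapsto$ configuration is an injective immersion, since the configuration determines $a$, $t$ and the images of a basis of $V$. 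Its domain has dimension $D_m$. It is scale invariant, and each $\phi_i$ has rank $d$ through the $a$-coordinate. All charts are related by the group action, so the ranks are constant and the same across charts. This gives a rank-regular fixed-rank scale atlas.

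\textbf{Ranks.} For a label set $I$, let $m_I$ be the affine dimension of $\{f_i:i\in I\}$. By the same stabiliser count, the set of similar copies of the subconfiguration $F_I$ has dimension $D_{m_I}$ when $\#I\ge2$. The map from copies of $F$ to copies of $F_I$ is a submersion, being equivariant under the transitive action of the similarity group. Hence $r_I=D_{m_I}$ for $\#I\ge2$, $r_I=d$ for $\#I=1$, and $r_{I_0}=D_m$.

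\textbf{Monotonicity.} By Theorem~A it remains to show that $\Delta_{I_0}>0$ forces $\Delta_I>0$ for every $I$. For singletons, $\Delta_I=d-(d-2)=2>0$. For $\#I=j\ge2$ with $m_I=\mu\le\min(m,j-1)$, we need
\[
 D_\mu-(d-2)j>0 \quad\text{whenever}\quad D_m-(d-2)k>0 .
\]
Adding the points of $I_0\setminus I$ one at a time, each new point raises $\#I$ by one and raises $\mu$ by $0$ or $1$. So it suffices to check two things. First, $D_\mu$ is increasing in $\mu$ with increments $D_{\mu+1}-D_\mu=d-1-\mu\le d-2$ for $\mu\ge1$. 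Each added point therefore changes the excess by at most $(d-1-\mu)-(d-2)\le 0$, i.e.\ the excess is nonincreasing along the chain. Hence $\Delta_I\ge\Delta_{I_0}>0$. Second, going from $\mu=0$, a single point, to $\mu=1$ changes $r$ from $d$ to $D_1=2d$, an increment of $d\ge d-2$. That step may increase the excess. But the bound $\Delta_I\ge\Delta_{I_0}$ is only needed for $\#I\ge2$, where the chain starts at $\mu\ge1$, and singletons were handled directly.

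So the condition of Theorem~A reduces to $D_m>(d-2)k$, which gives occurrence when $k<D_m/(d-2)$. Failure of the full inequality, including equality, gives almost sure absence by Theorem~A.

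\textbf{Main obstacle.} The hardest step should be the verification in the Atlas paragraph that the charts are genuine embeddings with constant ranks $r_I$. The key points are the rank identification $r_I=D_{m_I}$ via the submersion onto copies of $F_I$, and the handling of reflections and of the non-free rotation action. I would treat these through the Stiefel-manifold model and equivariance.
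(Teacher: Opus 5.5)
Your proposal is correct and follows essentially the same route as the paper. You reduce to the rank criterion (Theorem~A), build the atlas by quotienting the rotations by the pointwise stabiliser of the direction space, obtain $r_I=d+1+\tfrac12 e_I(2d-e_I-1)$ for $\#I\ge2$ from the stabiliser dimension $\binom{d-e_I}{2}$, and show $\Delta_I\ge\Delta$ for non-singleton $I$. The differences are cosmetic: the paper computes the stabiliser infinitesimally as the kernel of $\widetilde A_I$ at $(0,1,\mathrm{Id})$ rather than via your equivariant submersion onto the orbit of $F_I$, and your point-by-point chain is the incremental form of its bound $\Delta_I\ge\Delta+\tfrac12(m-e)(m+e-3)$.
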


No subset of $F$ gives a stronger restriction: a subset $I$ with $n\geq2$
points and affine dimension $e$ has rank $r_I=d+1+\frac12e(2d-e-1)$ and
satisfies $n\leq k-(m-e)$, from which Section~\ref{subsec:similar} deduces
$\Delta_I\geq\Delta+\frac12(m-e)(m+e-3)\geq\Delta$; singletons have excess
$2$.  We also note that the distance set $\{|x-y|:x,y\in B\}$ of the
spatial trace is almost surely $[0,\infty)$ (Remark~\ref{rem:distance}).

\medskip\noindent\emph{Method and organisation.}\ 
In the plane, the intersection formula and the strong Markov property let us
place the prescribed points one at a time on successive portions of the
path.  In dimensions $d\geq3$ we count approximate patterns at resolution
$\eps$: ball-hitting estimates give the expected number of candidates, the
ranks $r_I$ bound the number of pairs of candidates sharing nearby labelled
points, and a second-moment argument gives occurrence with positive
probability; compactness turns approximate patterns into an exact one, and
ergodicity of Brownian scaling upgrades positive probability to probability
one.  For nonoccurrence, a product capacity defined by an explicit energy
shows that when $\Delta_I\leq0$ the allowed positions of the points
labelled by $I$ are not reached by independent Brownian paths, and Brownian
bridges transfer this to a single trace.

Section~\ref{section:plane} treats the plane.  Section~\ref{section:Rd}
develops the spatial rank criterion: the second-moment argument in positive
excess, the local parameter-dimension bound, and the capacity argument in
nonpositive excess.  Section~\ref{section:consequences} applies the
criterion to progressions, homothetic and similar copies, corners and the
distance set.

\section{Patterns in the trace of planar Brownian motion}
\label{section:plane}

\subsection{Preliminaries}

For $x\in\R^d$ let $\Omega=C([0,\infty),\R^d)$, $W(t,\omega)=\omega(t)$,
and let $\mathbb P_x$ be Wiener measure on $\Omega$ concentrated on paths
starting at $x$; under $\mathbb P_x$ the coordinate process
$W=(W(t))_{t\geq0}$ has continuous paths and independent increments with
$W(t)-W(s)\sim N(0,(t-s)I_d)$ for $0\le s<t$.  We write $\mathbb E_x$ for
expectation and omit the subscript when $x=0$.  Let $(\mathcal F_t)_{t\geq0}$
be the usual augmentation of the natural filtration
$\sigma(W(s):0\le s\le t)$; it is right-continuous and contains every subset
of a null event.  Put $\mathcal F=\sigma(\bigcup_{t\ge0}\mathcal F_t)$; a
statement holds \emph{almost surely} if it fails only on a $\mathbb P_x$-null
event.  A random time $\tau$ is a \emph{stopping time} if
$\{\tau\le t\}\in\mathcal F_t$ for every $t$, and
\[
 \mathcal F_\tau
 :=\{A\in\mathcal F:A\cap\{\tau\le t\}\in\mathcal F_t
                         \text{ for every }t\ge0\}
\]
is the information available at time $\tau$.  The strong Markov property
states that for every almost surely finite stopping time $\tau$ the process
$(W(\tau+t)-W(\tau))_{t\ge0}$ is a Brownian motion independent of
$\mathcal F_\tau$ \cite[Thm.~2.16]{PM}.  Brownian scaling states that for
every $\lambda>0$, under $\mathbb P_x$ the process
$\bigl(x+\lambda^{-1}(W(\lambda^2t)-x)\bigr)_{t\ge0}$ has the law of
$(W(t))_{t\ge0}$ \cite[Lem.~1.7]{PM}.

In this section $d=2$, the path starts at the origin, and
\[
 B:=W([0,\infty)).
\]
For $E\subseteq\R^n$, $s\ge0$ and $\delta>0$, let
\[
 \mathcal H^s_\delta(E):=\inf\Bigl\{\sum_i(\operatorname{diam}U_i)^s:
 E\subseteq\bigcup_iU_i,\ \operatorname{diam}U_i\le\delta\Bigr\},
\]
the infimum being over countable covers.  The $s$-dimensional
\emph{Hausdorff measure} of $E$ is
$\mathcal H^s(E):=\lim_{\delta\downarrow0}\mathcal H^s_\delta(E)$, which
exists because $\mathcal H^s_\delta(E)$ increases as $\delta$ decreases;
here $0^0:=1$, so that $\mathcal H^0$ is counting measure.  The
\emph{Hausdorff dimension} of $E$ is
\[
 \dim_HE:=\inf\{s\ge0:\mathcal H^s(E)=0\}
 =\sup\{s\ge0:\mathcal H^s(E)=\infty\},
\]
with $\sup\varnothing=0$; in particular $\dim_H\varnothing=0$.  Hausdorff
dimension is monotone, invariant under bilipschitz maps, and countably
stable:
\begin{equation}\label{eq:countablestability}
 \dim_H\Bigl(\bigcup_{n=1}^{\infty}E_n\Bigr)
 =\sup_{n\geq1}\dim_H E_n ;
\end{equation}
see \cite[Chapter~3]{Falconer}.  Here a map $g$ between subsets of
Euclidean spaces is \emph{bilipschitz} if
$c|x-y|\le|g(x)-g(y)|\le C|x-y|$ for some constants $0<c\le C<\infty$.
A set is \emph{deterministic} if it is fixed before the Brownian path is
sampled.

\begin{lemma}[planar intersection formula]\label{Lemma:dimBrownie}
For every deterministic Borel set $E\subseteq\R^2$,
\[
\dim_H(E\cap B)=\dim_H E
\qquad\text{almost surely}.
\]
\end{lemma}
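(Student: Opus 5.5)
The plan follows the route indicated in the introduction: compute the dimension of the random time set $A_E:=\{t\ge0:W(t)\in E\}$ via Hawkes' theorem, and then push it forward with Kaufman's uniform dimension doubling. Since $E\cap B=W(A_E)$, this gives $\dim_H(E\cap B)=2\dim_H A_E$ as soon as Kaufman's theorem applies, and the remaining task is to show $\dim_H A_E=\tfrac12\dim_H E$ almost surely on the event $A_E\neq\varnothing$.

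\emph{Step 1 (upper bound).} The inequality $\dim_H(E\cap B)\le\dim_H E$ is trivial by monotonicity, so only the lower bound needs work. If $\dim_H E=0$ there is nothing to prove. Otherwise, by countable stability \eqref{eq:countablestability} and inner regularity (Davies' theorem that Borel sets of dimension $s$ contain compact subsets of dimension arbitrarily close to $s$), it suffices to treat a compact $K\subseteq E$ with $\dim_H K>s$ for each fixed $s<\dim_H E$. Replacing $K$ by a translate is not allowed, since $E$ is fixed, but we may work with a countable dense family of such compact pieces. We may also assume that $K$ lies in a ball not containing the origin.

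\emph{Step 2 (time set).} By Hawkes' preimage theorem in the form \cite[Eq.~(1.9)]{KX}, for planar Brownian motion and a compact $K$ with $\dim_H K>0$ we have, on the event $\{W\text{ hits }K\}$,
\[
 \dim_H\{t:W(t)\in K\}=\tfrac12\dim_H K
\]
almost surely. Since $\dim_H K>0$, $K$ is nonpolar for planar Brownian motion, because positive logarithmic capacity follows from positive dimension. Neighbourhood recurrence together with the strong Markov property then implies that $W$ hits $K$ almost surely: the path returns infinitely often to a fixed ball around $K$, and each return gives a hitting chance bounded below, by a Harnack/compactness argument. Hence, almost surely, $\dim_H A_K\ge\tfrac12\dim_H K>s/2$.

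\emph{Step 3 (Kaufman).} Kaufman's theorem \cite[Thm.~9.28]{PM} states that, almost surely, $\dim_H W(A)=2\dim_H A$ simultaneously for all $A\subseteq[0,\infty)$. Applying it on this single full-measure event to the random set $A_K$ gives $\dim_H(K\cap B)=\dim_H W(A_K)=2\dim_H A_K>s$. Intersecting over a countable sequence $s_n\uparrow\dim_H E$ completes the proof.

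The step I expect to need the most care is Step 2. The issue is checking that the cited form of Hawkes' theorem gives the lower bound almost surely on the hitting event, rather than only with positive probability or in $L^\infty$ of the probability. If the citation only yields "with positive probability", I would upgrade it by the strong Markov property at successive returns to a neighbourhood of $K$, using independence of the continuations together with the fact that a time set containing a copy of a large-dimensional set from one excursion suffices. This gives infinitely many independent trials, each succeeding with probability bounded below, so success occurs almost surely.
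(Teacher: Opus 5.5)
Your proposal is correct and is essentially the paper's argument: Hawkes' preimage theorem gives $\dim_H W^{-1}(E)=\tfrac12\dim_H E$, and Kaufman's dimension doubling, which holds simultaneously for all time sets on one full-measure event, is then applied to the random time set $W^{-1}(E)$, whose image is $E\cap B$. The paper reads the cited Hawkes formula directly as an almost-sure statement for the Borel set $E$, so it does not use your reduction to compact $K$, your recurrence argument that $K$ is hit almost surely, or your strong-Markov upgrade; these steps are sound, and the upgrade is the right repair if the citation is read only as an essential-supremum (positive-probability) statement.
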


\begin{proof}
We may assume $E\ne\varnothing$.  Put
$T_E:=W^{-1}(E)=\{t\geq0:W(t)\in E\}$.  Hawkes' preimage theorem
\cite{Hawkes}, in the form \cite[Eq.~(1.9)]{KX}, applied to planar
Brownian motion, the isotropic $2$-stable process in $\R^2$, gives
\[
 \dim_HT_E=\tfrac12\dim_HE
 \qquad\text{almost surely}.
\]
Kaufman's uniform
dimension-doubling theorem \cite{Kaufman}, \cite[Thm.~9.28]{PM} gives an
event $\Omega_K$ of probability one on which $\dim_HW(A)=2\dim_HA$ for every
$A\subseteq[0,1]$ simultaneously.  Applying it to the shifted path on each
interval $[m,m+1]$, intersecting the countably many events and using
\eqref{eq:countablestability}, we obtain $\dim_HW(A)=2\dim_HA$ for every
$A\subseteq[0,\infty)$ on one probability-one event.  Since
$W(T_E)=E\cap B$, taking $A=T_E$ proves the formula.  The simultaneous
quantifier is essential, because $T_E$ depends on the sampled path.
\end{proof}

The exceptional null set in Lemma~\ref{Lemma:dimBrownie} may depend on
$E$.  The next lemma, based on the strong Markov property, is what makes
the inductive constructions below possible.  A \emph{random compact set}
$K$, possibly empty, is measurable if $\{K\cap U\ne\varnothing\}$ is
measurable for every open $U\subseteq\R^2$; equivalently, it is a
measurable map into the hyperspace of compact sets with the
Hausdorff-metric Borel structure, the empty set being an isolated point.
Finite-time Brownian ranges, their continuous images and compact
intersections are random compact sets.

\begin{lemma}[intersection lemma]\label{lem:intersection}
Let $\tau$ be an almost surely finite stopping time and let $K$ be an
$\mathcal F_\tau$-measurable random compact subset of $\R^2$.  If
$q\geq0$ and $\dim_H K>q$ almost surely, then there is a stopping time
$\sigma$ with $\mathbb P(\tau<\sigma<\infty)=1$ such that
\[
K':=K\cap W([\tau,\sigma])
\]
is an $\mathcal F_\sigma$-measurable random compact set with
$\dim_H K'>q$ almost surely.
\end{lemma}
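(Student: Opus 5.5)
We use the strong Markov property to reduce to a deterministic $K$, apply Lemma~\ref{Lemma:dimBrownie} to the fresh path, and then cut that path off at a finite stopping time.

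\textbf{Step 1: decouple $K$ from the future.} Put $\widetilde W(t):=W(\tau+t)-W(\tau)$. By the strong Markov property, $\widetilde W$ is a planar Brownian motion independent of $\mathcal F_\tau$. Moreover
\[
 W([\tau,\infty))=W(\tau)+\widetilde B,\qquad \widetilde B:=\widetilde W([0,\infty)).
\]
Conditionally on $\mathcal F_\tau$, the pair $(K,W(\tau))$ is frozen, so $E:=K-W(\tau)$ is a deterministic compact set with $\dim_H E>q$.

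\textbf{Step 2: the untruncated intersection.} Lemma~\ref{Lemma:dimBrownie}, applied to $\widetilde W$ and this fixed $E$, gives
\[
 \dim_H(E\cap\widetilde B)=\dim_H E>q
\]
for almost every realisation of $\widetilde W$. To integrate over the conditioning we need joint measurability of the event $\{(K,y,\widetilde\omega):\dim_H((K-y)\cap\widetilde B)>q\}$. Fubini for the independent pair $\bigl((K,W(\tau)),\widetilde W\bigr)$ then gives $\dim_H\bigl(K\cap W([\tau,\infty))\bigr)>q$ almost surely. This measurability is the first technical point.

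One way to get it is to write
\[
 \dim_H(A)>q\iff \exists\,\text{rational } s>q \text{ with } \mathcal H^s_\infty(A)>0.
\]
Alternatively, work with the compact pieces $K\cap W([\tau,\tau+n])$. On these, $\mathcal H^s_\infty$ is upper semicontinuous in the Hausdorff metric, so measurability reduces to Borel functions of compact sets.

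\textbf{Step 3: truncate at a finite time.} By countable stability \eqref{eq:countablestability},
\[
 K\cap W([\tau,\infty))=\bigcup_n K\cap W([\tau,\tau+n]),
\]
so almost surely some finite $n$ already has $\dim_H\bigl(K\cap W([\tau,\tau+n])\bigr)>q$. A random $n$ chosen this way is not a stopping time. To get one we must detect the dimension excess by time $\tau+n$ using only $\mathcal F_{\tau+n}$.

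Define
\[
 \sigma:=\inf\bigl\{\tau+n: n\in\N,\ n\ge1,\ \dim_H\bigl(K\cap W([\tau,\tau+n])\bigr)>q\bigr\}.
\]
The event $\{\dim_H(K\cap W([\tau,\tau+n]))>q\}$ is $\mathcal F_{\tau+n}$-measurable. This uses that $K$ is $\mathcal F_\tau$-measurable, that the path segment is $\mathcal F_{\tau+n}$-measurable, and the measurability from Step~2. Hence $\{\sigma\le t\}=\bigcup_n\bigl(\{\sigma=\tau+n\}\cap\{\tau+n\le t\}\bigr)\in\mathcal F_t$, so $\sigma$ is a stopping time.

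By construction $\sigma>\tau$, and $\sigma<\infty$ almost surely by Step~2. The set $K'=K\cap W([\tau,\sigma])$ is compact, and it is $\mathcal F_\sigma$-measurable because $\sigma$ takes values in the countable family $\tau+\N$. Finally, $\dim_H K'>q$ almost surely by the definition of $\sigma$.

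\textbf{Main obstacle.} The probability is routine; the real difficulty is the measurability of $K\mapsto\dim_H K$-type events in the hyperspace of compact sets. I would first try to show that
\[
 \{C\in\mathcal K(\R^2):\dim_H C>q\}=\bigcup_{s\in\mathbb Q,\,s>q}\{C:\mathcal H^s_\infty(C)>0\}
\]
is Borel. Each $\{C:\mathcal H^s_\infty(C)\ge\eta\}$ is closed by upper semicontinuity of $\mathcal H^s_\infty$ on compact sets, which follows from approximating covers by finitely many open sets. If that is delicate, it suffices to have universal measurability, since the filtrations are completed.
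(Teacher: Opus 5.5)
Your proposal is correct and follows essentially the paper's own route: strong Markov property and independence to apply Lemma~\ref{Lemma:dimBrownie} to the frozen set $K-W(\tau)$, countable stability over the compact pieces $K\cap W([\tau,\tau+m])$, and stopping at the first integer $m$ where their dimension exceeds $q$. The differences are minor. You obtain Borel measurability of $\{C:\dim_H C>q\}$ directly from upper semicontinuity of $\mathcal H^s_\infty$ on the hyperspace, whereas the paper cites Mattila--Mauldin. You should also redefine $\sigma$ (say $\sigma=\tau+1$) on the null event where no such $m$ exists, and handle $\{\tau=\infty\}$ likewise; these events lie in $\mathcal F_0$ by completeness, and the paper does this so that $K'$ is compact everywhere rather than only almost surely.
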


\begin{proof}
Since the filtration is complete, we may modify $\tau$ and $K$ on the null
event $\{\tau=\infty\}$ and assume $\tau<\infty$ everywhere.  Put
$\widetilde W(t)=W(\tau+t)-W(\tau)$, a planar Brownian motion independent
of $\mathcal F_\tau$ by the strong Markov property.

\emph{Conditioning.}  For a compact set $L$ and a continuous path $\gamma$
put
\[
 D(L,\gamma):=\sup_{m\ge1}\dim_H\bigl(L\cap\gamma([0,m])\bigr)
 =\dim_H\bigl(L\cap\gamma([0,\infty))\bigr),
\]
the equality by \eqref{eq:countablestability}.  This is Borel in
$(L,\gamma)$: finite path ranges are compact, intersection is a Borel
operation on the hyperspace, and Hausdorff dimension is a Borel function
there \cite[Thm.~2.1(b)]{MattilaMauldin}.  Hence
$h(L):=\mathbb P\{D(L,\widetilde W)\ne\dim_HL\}$ is a Borel function of
$L$, and Lemma~\ref{Lemma:dimBrownie} says that $h(L)=0$ for every compact
$L$.  Since $K-W(\tau)$ is $\mathcal F_\tau$-measurable and
$\widetilde W$ is independent of $\mathcal F_\tau$,
\[
 \mathbb P\bigl(
  D(K-W(\tau),\widetilde W)\ne\dim_H(K-W(\tau))
 \bigm|\mathcal F_\tau\bigr)
 =h\bigl(K-W(\tau)\bigr)=0
 \quad\text{almost surely}.
\]
As $(K\cap W([\tau,\infty)))-W(\tau)=(K-W(\tau))\cap\widetilde W([0,\infty))$
and Hausdorff dimension is translation invariant, this says that
$\dim_H(K\cap W([\tau,\infty)))=\dim_HK>q$ almost surely.

\emph{Stopping.}  For $m\ge1$ put $R_m:=K\cap W([\tau,\tau+m])$.  These
are increasing $\mathcal F_{\tau+m}$-measurable random compact sets (the
range over $[\tau,\tau+m]$ is determined by $\tau$ and the path stopped at
$\tau+m$) with union $K\cap W([\tau,\infty))$, so by
\eqref{eq:countablestability},
$\sup_m\dim_HR_m=\dim_H(K\cap W([\tau,\infty)))>q$ almost surely, and
hence $\dim_HR_m>q$ for some finite $m$; this is where the strict
inequality $\dim_HK>q$ is used.  Let
$N_0:=\min\{m\ge1:\dim_HR_m>q\}$, with $\min\varnothing=\infty$.  The
event $A:=\{N_0=\infty\}$ is null, hence lies in $\mathcal F_0$ by
completeness.  Put $N:=N_0$ on $A^c$, $N:=1$ on $A$, $\sigma:=\tau+N$ and
$K':=R_N$.  Then $\sigma<\infty$ everywhere and $\dim_HK'>q$ outside $A$.
For $m\ge1$, $\{N_0\le m\}=\{\dim_HR_m>q\}\in\mathcal F_{\tau+m}$ and
$A\in\mathcal F_0\subseteq\mathcal F_{\tau+m}$, so
$\{N\le m\}=(\{N_0\le m\}\cap A^c)\cup A\in\mathcal F_{\tau+m}$ and
$\{N=m\}=\{N\le m\}\setminus\{N\le m-1\}\in\mathcal F_{\tau+m}$.  Hence
\[
 \{\sigma\le t\}
 =\bigcup_{m\ge1}\bigl(\{N=m\}\cap\{\tau+m\le t\}\bigr)
 \in\mathcal F_t ,
\]
so $\sigma$ is a stopping time, and for every open $U\subseteq\R^2$,
\[
 \{K'\cap U\ne\varnothing,\ \sigma\le t\}
 =\bigcup_{m\ge1}
 \{N=m,\ \tau+m\le t,\ R_m\cap U\ne\varnothing\}
 \in\mathcal F_t ,
\]
so $K'$ is $\mathcal F_\sigma$-measurable.  Finally $\sigma>\tau$ because
$N\ge1$.
\end{proof}

Countable intersections of probability-one events, for instance over all
pattern lengths or over rational $q<\dim_HE$, are used below without
further comment.

\subsection{Radial patterns}

For $\theta\in S^1$ let $L_\theta=\{r\theta:r>0\}$.  A Borel set
$C\subseteq\R^2\setminus\{0\}$ is \emph{radial} if $C\cap L_\theta$ is a
single point for every $\theta\in S^1$.

\begin{theorem}[radial patterns]\label{thm:radial}
Let $C$ be a deterministic Borel radial set, let
$\lambda_1,\ldots,\lambda_k>0$, and put $\lambda_0=1$.  Then almost surely
\[
X(C;\lambda_1,\ldots,\lambda_k)
:=\bigl\{x\in C:\lambda_jx\in B\text{ for }0\leq j\leq k\bigr\}
\]
has Hausdorff dimension $\dim_H C$.  If moreover the radial projection
$\pi|_C:C\to S^1$, $\pi(x)=x/|x|$, is bilipschitz, then the set of
directions $\pi(X(C;\lambda_1,\ldots,\lambda_k))$ also has Hausdorff
dimension $\dim_H C$.
\end{theorem}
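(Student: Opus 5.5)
The upper bound is immediate: $X(C;\lambda_1,\ldots,\lambda_k)\subseteq C$, so its dimension is at most $\dim_H C$ by monotonicity. For the lower bound, the plan is to fix $q<\dim_H C$ (we may assume $\dim_H C>0$, since otherwise there is nothing to prove, as $\dim_H\varnothing=0$). I will produce, almost surely, a random compact subset of $X(C;\lambda_1,\ldots,\lambda_k)$ of dimension $>q$. Intersecting the probability-one events over rational $q<\dim_H C$ then gives the first claim. Radiality is not needed for this part; it enters only in the second claim.

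\emph{Step 1 (compact deterministic start).} Since $C$ is Borel, it contains a compact set $K_0$ with $\dim_H K_0>q$. This is the classical inner regularity of Hausdorff dimension for Borel (indeed analytic) sets, due to Davies and Howroyd. $K_0$ is deterministic, hence $\mathcal F_0$-measurable.

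\emph{Step 2 (iterating the intersection lemma).} Put $\sigma_{-1}:=0$ and $Y_{-1}:=K_0$. For $j=0,1,\ldots,k$ in turn, apply Lemma~\ref{lem:intersection} with $\tau=\sigma_{j-1}$ and the random compact set $\lambda_jY_{j-1}$. This set is $\mathcal F_{\sigma_{j-1}}$-measurable, being a continuous image of such a set, and has dimension $\dim_H Y_{j-1}>q$, because $x\mapsto\lambda_jx$ is bilipschitz. The lemma yields a stopping time $\sigma_j>\sigma_{j-1}$ and an $\mathcal F_{\sigma_j}$-measurable compact set
\[
 K'_j=\lambda_jY_{j-1}\cap W([\sigma_{j-1},\sigma_j])
\]
with $\dim_H K'_j>q$ almost surely. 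Set $Y_j:=\lambda_j^{-1}K'_j\subseteq Y_{j-1}$, so again $\dim_H Y_j>q$.

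By construction, every $x\in Y_k$ lies in $K_0\subseteq C$ and satisfies $\lambda_jx\in B$ for all $0\le j\le k$, where $\lambda_0=1$ handles the first step. Hence $Y_k\subseteq X(C;\lambda_1,\ldots,\lambda_k)$ and $\dim_H X>q$ almost surely.

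\emph{Step 3 (directions).} If $\pi|_C$ is bilipschitz, then its restriction to $X\subseteq C$ is bilipschitz onto $\pi(X)$. Since Hausdorff dimension is invariant under bilipschitz maps, $\dim_H\pi(X)=\dim_H X=\dim_H C$.

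\emph{Main obstacle.} The main point to check is the hypotheses of Lemma~\ref{lem:intersection} at each stage. The sets must be compact, which is why we pass to the compact subset $K_0$: $C$ itself need not be closed. They must also be measurable with respect to the correct stopped $\sigma$-field, which follows because scaling is a continuous map on the hyperspace. Finally, the strict inequality $\dim_H>q$ is preserved through each step because scaling does not change dimension. The Davies--Howroyd compact-subset result is the only external ingredient beyond the earlier lemmas.
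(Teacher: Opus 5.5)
Your proof is correct and follows the paper's argument essentially step for step. You take a compact $K_0\subseteq C$ with $\dim_HK_0>q$, apply Lemma~\ref{lem:intersection} $k+1$ times in succession to the dilates $\lambda_jY_{j-1}$ after the previous stopping time, and finish with bilipschitz invariance for the directions. The only difference is cosmetic: the paper gets $K_0$ from a Frostman measure on $C$, inner regularity and the mass distribution principle, whereas you cite the Davies--Howroyd compact-subset theorem directly, which is an equivalent input.
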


\begin{proof}
If $\dim_HC=0$ the claim follows from
$X(C;\lambda_1,\ldots,\lambda_k)\subseteq C$.  Let $\dim_HC>0$, fix
$0\le q<\dim_HC$ and choose $q<s<\dim_HC$.  Frostman's lemma
\cite{Falconer} gives a nonzero finite Borel measure $\nu$
carried by $C$ with $\nu(B(x,r))\le Cr^s$, and by inner regularity some
compact $K_0\subseteq C$ has $\nu(K_0)>0$; the ball estimate for
$\nu|_{K_0}$ and the mass distribution principle \cite[Chapter~4]{Falconer}
give $\dim_HK_0\ge s>q$.  Put
$\tau_0=0$, $\mu_1=1$ and $\mu_{j+1}=\lambda_j$ for $1\le j\le k$.  Given
a compact $\mathcal F_{\tau_{j-1}}$-measurable $K_{j-1}\subseteq C$ of
dimension greater than $q$, where $1\le j\le k+1$, apply
Lemma~\ref{lem:intersection} after time $\tau_{j-1}$ to $\mu_jK_{j-1}$,
which is an $\mathcal F_{\tau_{j-1}}$-measurable random compact set of the
same dimension because multiplication by $\mu_j$ is bilipschitz.  This
gives a finite stopping time $\tau_j>\tau_{j-1}$ for which
\[
K_j:=\mu_j^{-1}
   \bigl(\mu_jK_{j-1}\cap W([\tau_{j-1},\tau_j])\bigr)
\]
is a compact subset of $K_{j-1}$ of dimension greater than $q$.  If
$x\in K_{k+1}$ then $\mu_jx\in B$ for $1\le j\le k+1$, that is,
$\lambda_jx\in B$ for $0\le j\le k$.  Hence
$K_{k+1}\subseteq X(C;\lambda_1,\ldots,\lambda_k)$ and
$\dim_HX(C;\lambda_1,\ldots,\lambda_k)>q$.  Intersecting over rational
$q<\dim_HC$ and using $X(C;\lambda_1,\ldots,\lambda_k)\subseteq C$ gives
equality.  The last assertion follows from bilipschitz invariance of
Hausdorff dimension, since $\pi|_X$ is a restriction of $\pi|_C$.
\end{proof}

The bilipschitz hypothesis cannot be dropped, since radial projection need
not preserve Hausdorff dimension for a general radial set; it holds for a
circle centred at the origin.

\begin{corollary}\label{cor:chain}
For any $d_1,\ldots,d_k>0$, almost surely the set of directions
$\theta\in S^1$ with $d_i\theta\in B$ for $1\leq i\leq k$ has Hausdorff
dimension one.
\end{corollary}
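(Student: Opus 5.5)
The plan is to apply Theorem~\ref{thm:radial} to the unit circle $C=S^1$ after normalising one of the radii. Put $\lambda_i:=d_i/d_1$ for $2\le i\le k$ and let $C:=d_1S^1=\{x\in\R^2:|x|=d_1\}$. This is a deterministic compact, hence Borel, subset of $\R^2\setminus\{0\}$. It meets each ray $L_\theta$ in exactly the one point $d_1\theta$, so it is radial, and $\dim_HC=1$.

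The radial projection $\pi|_C:x\mapsto x/|x|=x/d_1$ is the restriction of a linear map of norm $d_1^{-1}$ with inverse of norm $d_1$, so it is bilipschitz. This is exactly the circle case mentioned in the remark after Theorem~\ref{thm:radial}.

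Apply Theorem~\ref{thm:radial} with the scaling factors $\lambda_1,\dots,\lambda_{k-1}$ given by $d_2/d_1,\dots,d_k/d_1$, with $\lambda_0=1$. For $x=d_1\theta\in C$, the condition $\lambda_jx\in B$ for all $j$ is the same as $d_i\theta\in B$ for $1\le i\le k$. Hence
\[
\pi\bigl(X(C;\lambda_1,\ldots,\lambda_{k-1})\bigr)
=\{\theta\in S^1:d_i\theta\in B\text{ for }1\le i\le k\}.
\]
The second assertion of Theorem~\ref{thm:radial} then gives that this set has Hausdorff dimension $\dim_HC=1$ almost surely.

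If $k=1$ there are no extra factors. In that case we use only the first assertion, or the same theorem with the empty list of $\lambda$'s, for which the proof still works with the single step $j=1$.

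No step is a real obstacle. The only things to check are the bookkeeping of the indices and that the circle satisfies the radial and bilipschitz hypotheses, both of which are immediate.
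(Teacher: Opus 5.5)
Your proof is correct and is essentially the paper's: both apply Theorem~\ref{thm:radial} to a circle centred at the origin, where radial projection is bilipschitz. The only difference is bookkeeping. The paper takes $C=S^1$ and $\lambda_i=d_i$, which (because $\lambda_0=1$) gives the smaller set on which also $\theta\in B$, and so relies implicitly on monotonicity of dimension. Your normalisation $C=d_1S^1$, $\lambda_j=d_{j+1}/d_1$ produces exactly the target set.
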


\begin{proof}
Apply Theorem~\ref{thm:radial} with $C=S^1$ and $\lambda_i=d_i$; on $S^1$
the radial projection is the identity.
\end{proof}

\begin{corollary}\label{cor:planarAP}
Fix $\lambda>0$.  Almost surely, simultaneously for every $m\geq1$, the set
of directions $\theta\in S^1$ with
\[
 0,\lambda\theta,2\lambda\theta,\ldots,m\lambda\theta\in B
\]
has Hausdorff dimension one.  In particular $B$ almost surely contains
arbitrarily long arithmetic progressions starting at the origin whose
common difference has length $\lambda$.
\end{corollary}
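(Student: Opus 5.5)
The natural route is to apply Corollary~\ref{cor:chain}, or directly Theorem~\ref{thm:radial} with $C=S^1$, once for each $m$, and then intersect the countably many resulting probability-one events.

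For fixed $m\ge1$, take $C=S^1$ and $\lambda_i=i\lambda$ for $1\le i\le m$. Theorem~\ref{thm:radial} concerns points $x\in C$ with $\lambda_jx\in B$ for $0\le j\le m$, where $\lambda_0=1$. This would force $\theta$ itself onto $B$, which is not part of the progression, so it is cleaner to use Corollary~\ref{cor:chain} with $d_i=i\lambda$, $1\le i\le m$. That gives, almost surely, that
\[
\Theta_m:=\{\theta\in S^1: i\lambda\theta\in B\text{ for }1\le i\le m\}
\]
has Hausdorff dimension one. The origin lies in $B$ because $W(0)=0$, so the term $0\cdot\lambda\theta$ is automatic. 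Hence $\Theta_m$ is exactly the set of directions described in the statement.

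Each $\Theta_m$ has dimension one on an event $\Omega_m$ of probability one. The set $\bigcap_{m\ge1}\Omega_m$ still has probability one, and on it the claim holds simultaneously for every $m$; this is the countable-intersection remark made after Lemma~\ref{lem:intersection}. The dimension is at most one trivially, since $\Theta_m\subseteq S^1$.

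For the final assertion, dimension one forces $\Theta_m\neq\varnothing$, because $\dim_H\varnothing=0$. Any $\theta\in\Theta_m$ gives the $(m+1)$-term progression $0,\lambda\theta,\dots,m\lambda\theta$ in $B$, whose common difference has length $\lambda$.

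No step is a real obstacle. The only point needing care is the bookkeeping of $\lambda_0=1$ in Theorem~\ref{thm:radial}, which is avoided by using Corollary~\ref{cor:chain} and the fact that $0\in B$.
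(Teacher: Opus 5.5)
Your proof is correct and is essentially the paper's argument: apply Corollary~\ref{cor:chain} with $d_i=i\lambda$, use $W(0)=0$ to put the term $0$ on $B$, and intersect the probability-one events over $m$. Your worry about $\lambda_0=1$ in Theorem~\ref{thm:radial} was unnecessary, though harmless. The extra requirement $\theta\in B$ only shrinks the set, and a subset of dimension one already gives the result; this is exactly how Corollary~\ref{cor:chain} itself is derived.
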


\begin{proof}
For each $m$ apply Corollary~\ref{cor:chain} with $d_i=i\lambda$,
$1\le i\le m$; since $W(0)=0$, the points
$0,\lambda\theta,\ldots,m\lambda\theta$ form a progression on $B$.
Intersect over $m$.  The direction may depend on $m$.
\end{proof}

\subsection{Patterns in bilipschitz-related sets}

Let $E_1,\ldots,E_k$ be deterministic Borel sets and, for $1\le i<k$, let
$g_i:E_i\to E_{i+1}$ be a fixed bilipschitz bijection.  Write
\[
g_{i,j}:=g_{j-1}\circ\cdots\circ g_i\quad(i<j),
\qquad g_{j,j}:=\operatorname{id}_{E_j}.
\]

\begin{theorem}\label{theo:bilip}
Almost surely the set
\[
S:=\bigl\{y\in E_k:g_{i,k}^{-1}(y)\in B
                      \text{ for every }1\leq i\leq k\bigr\}
\]
has Hausdorff dimension $\dim_H E_1$.
\end{theorem}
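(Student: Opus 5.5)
We follow the proof of Theorem~\ref{thm:radial}, replacing the dilations $x\mapsto\mu_jx$ by the bilipschitz maps $g_{1,j}$. Everything is transported to $E_1$. Put $h_j:=g_{1,j}:E_1\to E_j$ for $1\le j\le k$. Each $h_j$ is a bilipschitz bijection, since a composition of bilipschitz bijections is bilipschitz. The defining condition of $S$ can be rewritten as follows. We have $y\in S$ if and only if $x:=g_{1,k}^{-1}(y)\in E_1$ satisfies $h_i(x)\in B$ for every $i$, because $g_{i,k}^{-1}(y)=g_{1,i}(g_{1,k}^{-1}(y))$. Hence
\[
 S=h_k(X),\qquad X:=\{x\in E_1:h_i(x)\in B\text{ for }1\le i\le k\}.
\]
By bilipschitz invariance it therefore suffices to show $\dim_HX=\dim_HE_1$ almost surely. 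The upper bound holds because $X\subseteq E_1$, and the case $\dim_HE_1=0$ is trivial.

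For the lower bound, fix rational $q<\dim_HE_1$ and pick a compact $K_0\subseteq E_1$ with $\dim_HK_0>q$. This uses Frostman's lemma and inner regularity exactly as in the proof of Theorem~\ref{thm:radial}. Inductively, for $1\le j\le k$, suppose we have an $\mathcal F_{\tau_{j-1}}$-measurable compact $K_{j-1}\subseteq K_0$ with $\dim_HK_{j-1}>q$, where $\tau_0=0$. Consider $h_j(K_{j-1})$. Because $h_j$ is bilipschitz, hence continuous, on $E_1$, this is a compact subset of $E_j$ of the same dimension. It is $\mathcal F_{\tau_{j-1}}$-measurable as a continuous image of a random compact set under a fixed map.

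Apply Lemma~\ref{lem:intersection} to $h_j(K_{j-1})$ after time $\tau_{j-1}$. This gives a stopping time $\tau_j>\tau_{j-1}$ and the set
\[
 K_j:=h_j^{-1}\bigl(h_j(K_{j-1})\cap W([\tau_{j-1},\tau_j])\bigr)\subseteq K_{j-1}.
\]
The set $K_j$ is compact, since $h_j^{-1}$ is continuous on $h_j(E_1)=E_j$. It is $\mathcal F_{\tau_j}$-measurable and has $\dim_HK_j>q$ almost surely. After $k$ steps every $x\in K_k$ satisfies $h_j(x)\in B$ for all $j$, so $K_k\subseteq X$ and $\dim_HX>q$. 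Intersecting the countably many probability-one events over rational $q$ finishes the proof.

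The main point needing care is measurability. We must check that $h_j(K_{j-1})$ and the pullback $K_j$ are random compact sets in the sense required by Lemma~\ref{lem:intersection}. Here $h_j$ is only defined on the Borel set $E_1$, not on all of $\R^2$.

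To handle this, extend $h_j$ to a Lipschitz map on $\overline{E_1}$, and note that $K_{j-1}\subseteq E_1$ always. For open $U$, the event $\{h_j(K_{j-1})\cap U\ne\varnothing\}$ equals $\{K_{j-1}\cap h_j^{-1}(U)\ne\varnothing\}$. The set $h_j^{-1}(U)$ is relatively open in $E_1$, so it equals $V\cap E_1$ for some open $V$. Since $K_{j-1}\subseteq E_1$, the event reduces to $\{K_{j-1}\cap V\ne\varnothing\}$, which is measurable. The same argument applies to $K_j$.
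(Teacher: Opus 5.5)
Your proof is correct and is essentially the paper's argument. The paper runs the same iteration of Lemma~\ref{lem:intersection} forward, keeping $K_j\subseteq g_{j-1}(K_{j-1})\cap B\subseteq E_j$ and recovering the chain by backward induction at the end. You instead pull everything back to $E_1$ via $h_j=g_{1,j}$ so that the sets are nested as in Theorem~\ref{thm:radial}; the two constructions produce the same sets up to applying $h_j$, and your explicit check that $h_j^{\pm1}$ preserve measurability of the random compact sets is a detail the paper leaves implicit.
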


\begin{proof}
All $E_i$ have the same dimension, so if $\dim_HE_1=0$ the claim follows
from $S\subseteq E_k$.  Let $\dim_HE_1>0$, fix $0\le q<\dim_HE_1$ and, as
in the previous proof, choose a compact $D_1\subseteq E_1$ with
$\dim_HD_1>q$.  Lemma~\ref{lem:intersection} applied to $D_1$ at time
zero gives a compact $K_1\subseteq D_1\cap B$ of dimension greater than
$q$, measurable at a finite stopping time $\tau_1$.  Inductively, given
$K_{j-1}\subseteq E_{j-1}\cap B$, apply the lemma after $\tau_{j-1}$ to
$g_{j-1}(K_{j-1})$, which is an $\mathcal F_{\tau_{j-1}}$-measurable random
compact set of dimension greater than $q$ because $g_{j-1}$ is fixed,
continuous and bilipschitz.  This gives a compact
$K_j\subseteq g_{j-1}(K_{j-1})\cap B$ with $\dim_HK_j>q$, measurable at a
finite stopping time $\tau_j$.  For $y\in K_k$ put $y_i=g_{i,k}^{-1}(y)$;
the inclusions $K_j\subseteq g_{j-1}(K_{j-1})$ give, by backward induction,
$y_i\in K_i\subseteq B$ for every $i$.  Hence $K_k\subseteq S$ and
$\dim_HS>q$.  Intersecting over rational $q$ gives
$\dim_HS\ge\dim_HE_1$; the reverse inequality follows from
$S\subseteq E_k$.
\end{proof}

A \emph{rational closed disk} is a closed disk with rational centre and
positive rational radius; there are countably many, and every nonempty open
subset of $\R^2$ contains one.

\begin{corollary}\label{cor:planartranslates}
For every deterministic finite set $F\subseteq\R^2$, almost surely the
translation set
\[
T_F:=\{a\in\R^2:a+F\subseteq B\}
\]
is locally of full dimension: $\dim_H(T_F\cap V)=2$ for every nonempty open
set $V\subseteq\R^2$.
\end{corollary}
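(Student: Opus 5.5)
We apply Theorem~\ref{theo:bilip} with translations as the bilipschitz maps. Write $F=\{f_1,\ldots,f_k\}$; if $F=\varnothing$ then $T_F=\R^2$ and there is nothing to prove, so assume $k\ge1$.

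By countable stability and the fact that every nonempty open set contains a rational closed disk, it suffices to show that for each fixed rational closed disk $\bar D$ we have $\dim_H(T_F\cap\bar D)=2$ almost surely. We then intersect the countably many probability-one events. Indeed, if $V$ is open and nonempty, it contains some rational closed disk $\bar D$, and
\[
 2\ge\dim_H(T_F\cap V)\ge\dim_H(T_F\cap\bar D)=2.
\]

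Fix $\bar D$. Set $E_i:=\bar D+f_i$ for $1\le i\le k$; these are deterministic compact sets of dimension $2$. Let
\[
 g_i:E_i\to E_{i+1},\qquad g_i(x)=x+f_{i+1}-f_i.
\]
Each $g_i$ is an isometric bijection, hence bilipschitz, and the composition is $g_{i,k}(x)=x+f_k-f_i$. Theorem~\ref{theo:bilip} then gives, almost surely,
\[
 S=\{y\in E_k:\ y-f_k+f_i\in B\text{ for all }i\},\qquad \dim_H S=\dim_H E_1=2.
\]
The map $y\mapsto a:=y-f_k$ is an isometry carrying $S$ onto $\{a\in\bar D:a+f_i\in B\ \forall i\}=T_F\cap\bar D$. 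Hence $\dim_H(T_F\cap\bar D)=2$ almost surely.

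No step here is a real obstacle. The only points to check are that the disk and the translation maps are deterministic, so Theorem~\ref{theo:bilip} applies with its exceptional null set depending only on $(F,\bar D)$, and that only countably many disks are needed. The upper bound $\dim_H\le2$ is trivial since all sets involved lie in $\R^2$.
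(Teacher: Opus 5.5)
Your proposal is correct and follows essentially the same route as the paper: fix a rational closed disk, take $E_i=\bar D+f_i$ with translations as the bilipschitz maps, apply Theorem~\ref{theo:bilip}, and translate $S$ by $-f_k$ to obtain $T_F\cap\bar D$. Then intersect over the countably many rational disks. One minor point: only monotonicity of Hausdorff dimension is needed in your reduction step, not countable stability.
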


\begin{proof}
Write $F=\{x_1,\ldots,x_k\}$; the empty case is trivial.  Fix a rational
closed disk $Q$, put $E_i=x_i+Q$ and $g_i(z)=z+(x_{i+1}-x_i)$.
Theorem~\ref{theo:bilip} gives $S\subseteq E_k$ of dimension two such that
$g_{i,k}^{-1}(y)=y+(x_i-x_k)\in B$ for all $y\in S$ and $1\le i\le k$;
thus $S-x_k\subseteq T_F\cap Q$ and $\dim_H(T_F\cap Q)=2$.  Intersect over
all rational closed disks.
\end{proof}

\subsection{Starting points, gap vectors and countable configurations}
\label{subsec:planarquestions}

We use the following slicing bound; compare \cite{Falconer}.

\begin{lemma}[fibres and dimension]\label{lem:fibre}
Let $s,t\geq0$, let $A\subseteq\R^m\times\R^n$ be Borel, and for
$y\in\R^n$ write $A_y:=\{x\in\R^m:(x,y)\in A\}$.  Suppose there is a Borel
set $Y\subseteq\R^n$ with $\mathcal H^s(Y)>0$ such that $A_y\ne\varnothing$
and $\dim_HA_y\geq t$ for every $y\in Y$.  Then $\dim_HA\geq s+t$.
\end{lemma}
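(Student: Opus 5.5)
We prove this with the standard Frostman and mass-distribution approach, following Falconer's slicing inequality. Fix $s'<s$ when $s>0$ and $t'<t$ when $t>0$; it suffices to show $\dim_HA\ge s'+t'$. If $t=0$ the claim is $\dim_HA\ge s$. That case follows because the coordinate projection $p(x,y)=y$ is $1$-Lipschitz and $p(A)\supseteq Y$, so $\dim_HA\ge\dim_H p(A)\ge\dim_HY\ge s$. We may therefore assume $t>0$.

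The main step is a covering argument by contradiction. Suppose $\mathcal H^{s'+t'}(A)=0$. For each $\delta>0$ and $\eta>0$, cover $A$ by sets $U_i$ with $\operatorname{diam}U_i\le\delta$ and $\sum_i(\operatorname{diam}U_i)^{s'+t'}<\eta$; we may take the $U_i$ to be closed cubes $P_i\times Q_i$ of side $\rho_i$. For each $y$, the fibre $A_y$ is covered by the cubes $P_i$ with $y\in Q_i$, so
\[
 \mathcal H^{t'}_{\delta}(A_y)\le\sum_i\rho_i^{t'}\mathbf 1_{Q_i}(y).
\]
Integrate this against an $s'$-Frostman measure $\nu$ on a compact piece of $Y$. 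Such a $\nu$ exists because $\mathcal H^s(Y)>0$ and $Y$ is Borel (Frostman's lemma for Borel/analytic sets), and it satisfies $\nu(Q)\le c\,(\operatorname{diam}Q)^{s'}$. This gives
\[
 \int\mathcal H^{t'}_\delta(A_y)\,d\nu(y)\le c\sum_i\rho_i^{t'+s'}\lesssim\eta .
\]
The integral is an upper integral if measurability is in doubt. Let $\eta\to0$ along a sequence and use Fatou or countable subadditivity; then let $\delta\to0$ with monotone convergence. This yields $\mathcal H^{t'}(A_y)=0$ for $\nu$-almost every $y$. That contradicts $\dim_HA_y\ge t>t'$, which gives $\mathcal H^{t'}(A_y)=\infty$ for every $y\in Y$, since $\nu(Y)>0$. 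Hence $\mathcal H^{s'+t'}(A)>0$ and $\dim_HA\ge s'+t'$. Letting $s'\uparrow s$ and $t'\uparrow t$ finishes the proof. When $s=0$, a Frostman measure is not needed: take $\nu$ to be a point mass at some $y\in Y$, which is nonempty because $\mathcal H^0(Y)>0$. Then $\dim_HA\ge\dim_HA_y\ge t$ directly, since $A_y\times\{y\}\subseteq A$ is an isometric copy.

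The only delicate point is measurability of $y\mapsto\mathcal H^{t'}_\delta(A_y)$. I expect to sidestep it by working with the explicit dominating function $\sum_i\rho_i^{t'}\mathbf 1_{Q_i}$, which is Borel, and with outer integrals. The inequality chain uses only monotonicity of the outer integral and countable subadditivity of $\nu$: the set $\{y:\mathcal H^{t'}_\delta(A_y)>0\}$ has $\nu$-outer measure zero. We then take the union over rational $\delta$, noting that $\mathcal H^{t'}(A_y)=0$ if and only if $\mathcal H^{t'}_\delta(A_y)=0$ for some (equivalently every) $\delta$. This avoids all measurability issues. The use of cubes changes diameters only by a constant depending on $m,n$, which is harmless.
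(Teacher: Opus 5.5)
Your proposal is correct and follows essentially the same route as the paper: you dispose of $s=0$ with the isometric copy $A_y\times\{y\}$ and of $t=0$ with the Lipschitz projection, then argue by contradiction using a Frostman measure on $Y$, a cover of $A$ with $\sum(\operatorname{diam})^{s'+t'}<\eta$, and the Borel majorant of $\mathcal H^{t'}_\delta(A_y)$ integrated against $\nu$, obtaining $\mathcal H^{t'}(A_y)=0$ for $\nu$-almost every $y$, which contradicts $\dim_HA_y\ge t>t'$ on $Y$. The differences from the paper are cosmetic: you use cubes instead of balls, and you spell out the upper-integral limits as $\eta\downarrow0$ and $\delta\downarrow0$ in more detail.
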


\begin{proof}
If $s=0$, pick $y\in Y$: $A$ contains the isometric copy
$A_y\times\{y\}$, so $\dim_HA\geq t$.  If $t=0$, the Lipschitz projection
to the second coordinate maps $A$ onto a set containing $Y$, so
$\dim_HA\geq s$.  Let $s,t>0$ and fix $0<s'<s$, $0<t'<t$.  Frostman's
lemma \cite{Falconer} gives a nonzero finite Borel measure $\nu$
carried by $Y$ with $\nu(B(y,r))\le Cr^{s'}$ for all $y$ and $r$.  Suppose
$\mathcal H^{s'+t'}(A)=0$.  Given $\delta,\eta>0$, cover $A$ by balls
$B_i\subset\R^{m+n}$ of radii $r_i<\delta$ with
$\sum_ir_i^{s'+t'}<\eta$.  For each $y$ the nonempty sections $(B_i)_y$
cover $A_y$ and have diameter at most $2r_i$, so
\[
 \int\mathcal H^{t'}_{2\delta}(A_y)\,d\nu(y)
 \le2^{t'}\sum_ir_i^{t'}\nu(\pi_2(B_i))
 \le C'\sum_ir_i^{s'+t'}<C'\eta ,
\]
where the left-hand side is an upper integral; the pointwise Borel majorant
$2^{t'}\sum_ir_i^{t'}\mathbf 1_{\pi_2(B_i)}(y)$ is all that is needed.
Letting $\eta\downarrow0$ and then $\delta\downarrow0$ gives
$\mathcal H^{t'}(A_y)=0$ for $\nu$-almost every $y$, contradicting
$\dim_HA_y\ge t>t'$ on $Y$.  Hence $\dim_HA\geq s'+t'$; let
$s'\uparrow s$ and $t'\uparrow t$.
\end{proof}

Fix $n\geq2$ and define
\[
\mathcal A_n
:=\bigl\{(a,\vec d)\in\R^2\times(\R^2\setminus\{0\}):
          a+j\vec d\in B\text{ for }0\leq j\leq n-1\bigr\},
\]
the set of parameters of $n$-term progressions on $B$ with starting point
$a$ and common difference, or \emph{gap vector}, $\vec d$.  We call the
first factor $\R^2$, in which $a$ varies, the \emph{translation space}, and
the second, in which $\vec d$ varies, the \emph{gap space}.  For fixed
$\vec d\ne0$ the fibre is
\[
 (\mathcal A_n)_{\vec d}
 :=\{a\in\R^2:(a,\vec d)\in\mathcal A_n\}
 =\{a\in\R^2:a+j\vec d\in B\text{ for }0\leq j<n\}.
\]

\emph{Measurability.}  Let $\Omega_{\mathrm{path}}:=C_0([0,\infty),\R^2)$
be the space of continuous paths starting at the origin, with the topology
of uniform convergence on bounded time intervals, and regard Brownian motion
as a random element $\omega$ of it with law Wiener measure.  For every
$M\geq1$ the map $(\omega,x)\mapsto\min_{0\le t\le M}|\omega(t)-x|$ is
continuous, so
\[
 \mathfrak B:=\{(\omega,x):x\in B(\omega)\}
 =\bigcup_{M\ge1}
   \Bigl\{(\omega,x):\min_{0\le t\le M}|\omega(t)-x|=0\Bigr\}
\]
is Borel, and hence so is
\[
 \mathfrak A_n:=\{(\omega,a,\vec d):\vec d\neq0,
       \ a+j\vec d\in B(\omega),\ 0\le j\le n-1\}.
\]
We need two standard facts.  A subset of a Polish space is \emph{analytic}
if it is the projection of a Borel subset of a product with another Polish
space; every analytic set is universally measurable, that is, measurable
with respect to the completion of every finite Borel measure
\cite[Thm.~21.10]{Kechris}.  In particular such a set is measurable with
respect to the completed product of Wiener measure and Lebesgue measure on
a ball, so Tonelli's theorem for complete product measures applies to it.
Second, for fixed $(\omega,\vec d)$ the section
$(\mathcal A_n(\omega))_{\vec d}$ is a finite intersection of translates
of $B(\omega)$, hence a countable union of compact sets, and the
section-dimension theorem \cite[Thm.~6.1]{MattilaMauldin}, applied to the
Borel set $\mathfrak A_n$, shows that
$(\omega,\vec d)\mapsto\dim_H\bigl((\mathcal A_n(\omega))_{\vec d}\bigr)$
is Borel; the same holds after intersecting the section with a fixed open
set.  This justifies Tonelli's theorem in the next proof.  The
infinite-progression argument of Section~\ref{subsec:infinite}, whose
sections need not be countable unions of compact sets, uses the analytic-set
fact instead.

\begin{theorem}[finite progression parameters]\label{thm:planarAdim}
Almost surely, simultaneously for every $n\geq2$:
\begin{enumerate}
\item $\mathcal A_n$ is locally of full dimension:
$\dim_H(\mathcal A_n\cap O)=4$ for every nonempty open set
$O\subseteq\R^4$; in particular, $\mathcal A_n$ is dense in $\R^4$;
\item for Lebesgue-almost every $\vec d\in\R^2$, the fibre
$(\mathcal A_n)_{\vec d}$ is locally of full dimension:
$\dim_H\bigl((\mathcal A_n)_{\vec d}\cap U\bigr)=2$ for every nonempty
open set $U\subseteq\R^2$.
\end{enumerate}
\end{theorem}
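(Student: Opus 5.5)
The plan is to prove (2) first for each fixed gap vector, then to use Fubini over gap vectors, and to deduce (1) from (2) through the slicing bound Lemma~\ref{lem:fibre}.

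\emph{Fixed gap vector.}  Fix $\vec d\ne0$ and $n\ge2$.  The fibre $(\mathcal A_n)_{\vec d}$ is exactly the translation set $T_F$ of $F=\{0,\vec d,\ldots,(n-1)\vec d\}$.  Hence Corollary~\ref{cor:planartranslates} shows that, for this fixed $\vec d$, almost surely $\dim_H((\mathcal A_n)_{\vec d}\cap U)=2$ for every nonempty open $U$.  Since every nonempty open set contains a rational closed disk, it suffices to check this for the countably many rational disks $Q$.

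\emph{Fubini over gap vectors.}  For each rational disk $Q$ and each $n$, consider the set $G_{n,Q}$ of pairs $(\omega,\vec d)$ with $\dim_H((\mathcal A_n(\omega))_{\vec d}\cap Q^\circ)<2$.  By the measurability discussion preceding the theorem (the section-dimension theorem of Mattila--Mauldin), $G_{n,Q}$ is Borel.  By the previous step, each $\vec d$-section of $G_{n,Q}$ is Wiener-null.  Tonelli then shows that $G_{n,Q}$ is null for Wiener $\otimes$ Lebesgue.  Applying Tonelli the other way, for almost every $\omega$ the $\omega$-section of $G_{n,Q}$ is Lebesgue-null.  Intersecting over the countably many pairs $(n,Q)$ gives a single probability-one event on which (2) holds simultaneously for all $n$.

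\emph{From fibres to the full parameter set.}  For (1), work on the event from the previous step and fix a nonempty open $O\subseteq\R^4$.  Shrink $O$ to a product $U\times V$ with $U,V\subseteq\R^2$ open and $0\notin V$.  The set $Y$ of $\vec d\in V$ whose fibre satisfies $\dim_H((\mathcal A_n)_{\vec d}\cap U)=2$ has full Lebesgue measure in $V$, so $\mathcal H^2(Y)>0$.  Each such fibre is in particular nonempty.  Lemma~\ref{lem:fibre}, applied with $s=t=2$ to the Borel set $A=\mathcal A_n\cap(U\times V)$ (with coordinates ordered as translation $\times$ gap), gives $\dim_HA\ge4$.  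The upper bound $\dim_H A\le4$ is trivial, and density follows since $A$ is nonempty.  Two small points remain.  First, $\mathcal A_n(\omega)$ is Borel for each fixed $\omega$, being a section of $\mathfrak A_n$.  Second, $Y$ is Borel, or at least contains a Borel set of positive measure, by the Borel measurability of the dimension map.

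\emph{Main obstacle.}  I expect the measurability justification in the Fubini step to be the delicate point.  We need the joint measurability of $(\omega,\vec d)\mapsto\dim_H$ of the fibre intersected with an open set, and the preceding discussion supplies exactly this.  Beyond that, the argument is bookkeeping of quantifiers: the exceptional null set must be chosen independently of $n$, $U$ and $O$.  This is handled by reducing to countably many rational disks and boxes.
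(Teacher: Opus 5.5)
Your proposal is correct and follows the paper's proof essentially step by step. You use Corollary~\ref{cor:planartranslates} for each fixed gap vector, then Tonelli over gap vectors via the Mattila--Mauldin measurability of the section dimensions, then a countable intersection over $n$ and rational disks, and finally Lemma~\ref{lem:fibre} with $s=t=2$ on a product $U\times V$. The paper obtains a compact $Y$ by inner regularity where you rely on Borel sections directly, which is equally valid.

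One small omission: the $\vec d=0$ section of $G_{n,Q}$ is not Wiener-null. In fact it is all of path space, because $\mathcal A_n$ excludes $\vec d=0$. This is harmless since $\{0\}$ is Lebesgue null, as the paper notes.
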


Equivalently, for almost every sampled trace there is a Lebesgue-null set
$N_B\subseteq\R^2$, which we may take to contain $0$, such that (2) holds
for every $\vec d\notin N_B$ and every $n\geq2$.  One gap vector $\vec d\notin N_B$ thus supports
progressions of every finite length, but with starting points $a_n$ that
may depend on $n$; this does not give an infinite progression, which is the
subject of Section~\ref{subsec:infinite}.

\begin{proof}
Let $\mathcal U$ and $\mathcal V$ be the countable bases of open disks with
rational centres and positive rational radii in translation and gap space.
Fix $n\geq2$ and $U\in\mathcal U$.  For each fixed $\vec d\ne0$,
Corollary~\ref{cor:planartranslates} applied to the deterministic set
$\{0,\vec d,\ldots,(n-1)\vec d\}$ gives
$\mathbb P\bigl(\dim_H((\mathcal A_n)_{\vec d}\cap U)=2\bigr)=1$.  By the
measurability discussion above, Tonelli's theorem gives, for every $R>0$,
\[
 \mathbb E\Bigl[
 \operatorname{Leb}\bigl\{\vec d\in B(0,R):
 \dim_H((\mathcal A_n)_{\vec d}\cap U)<2\bigr\}
 \Bigr]
 =\int_{B(0,R)}
 \mathbb P\bigl(
 \dim_H((\mathcal A_n)_{\vec d}\cap U)<2
 \bigr)\,d\vec d=0 ,
\]
the point $\vec d=0$ being Lebesgue null.  The nonnegative random variable
inside the expectation is therefore zero almost surely.  Intersecting over
$R\in\N$, $n\geq2$ and $U\in\mathcal U$ gives one probability-one event on
which $\dim_H((\mathcal A_n)_{\vec d}\cap U)=2$ for Lebesgue-almost every
$\vec d$, simultaneously for all $n$ and $U$; since every nonempty open set
contains some $U\in\mathcal U$, this proves (2).

For (1), fix $U\in\mathcal U$ and $V\in\mathcal V$.  Almost every
$\vec d\in V$ has a fibre of dimension two inside $U$, so by inner
regularity of Lebesgue measure there is a compact $Y\subset V$ of positive
Lebesgue measure consisting of such vectors.  Lemma~\ref{lem:fibre} with
$s=t=2$, applied to $\mathcal A_n\cap(U\times V)$, gives
$\dim_H(\mathcal A_n\cap(U\times V))\geq4$, and the reverse inequality is
trivial.  Every nonempty open subset of $\R^4$ contains such a product
$U\times V$.
\end{proof}

The same construction handles countably many prescribed points.
Intersecting the finite-prefix events would only give ``for every $N$ there
is a translation $a_N$''; using successive finite time intervals produces
one translation that works for every $N$.

\begin{theorem}[countable configurations]\label{prop:countable}
For every fixed countable set $F\subseteq\R^2$, almost surely the
translation set
\[
T_F:=\{a\in\R^2:a+F\subseteq B\}
\]
is dense in $\R^2$.  In particular, the trace contains a translate of $F$.
\end{theorem}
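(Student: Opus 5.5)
Enumerate $F=\{x_1,x_2,\ldots\}$; the finite case is Corollary~\ref{cor:planartranslates}, so assume $F$ is infinite.  Fix a rational closed disk $Q$.  We will build a decreasing sequence of random compact sets $Q\supseteq K_1\supseteq K_2\supseteq\cdots$ and stopping times $0=\tau_0<\tau_1<\tau_2<\cdots$.  Each $K_j$ is nonempty, in fact of dimension greater than some fixed $q\in(0,2)$, and is $\mathcal F_{\tau_j}$-measurable.  The sets will satisfy $x_i+K_j\subseteq W([\tau_{i-1},\tau_i])\subseteq B$ for every $i\le j$.  Then $\bigcap_jK_j$ is a decreasing intersection of nonempty compact sets, hence nonempty.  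Any $a$ in it satisfies $a+x_i\in B$ for every $i$, so $a\in T_F\cap Q$.  Intersecting the resulting probability-one events over the countably many rational disks $Q$ gives density of $T_F$, since every nonempty open set contains such a disk.

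\textbf{Inductive step.}  Start with $K_0:=Q$, which has dimension $2>q$.  Given $K_{j-1}$ and $\tau_{j-1}$, apply Lemma~\ref{lem:intersection} after time $\tau_{j-1}$ to the translate $x_j+K_{j-1}$.  This set is $\mathcal F_{\tau_{j-1}}$-measurable, random compact, and has dimension greater than $q$, because translation by the deterministic vector $x_j$ is an isometry and preserves measurability.  The lemma yields a stopping time $\tau_j$ with $\tau_{j-1}<\tau_j<\infty$ almost surely, and an $\mathcal F_{\tau_j}$-measurable compact set
\[
 (x_j+K_{j-1})\cap W([\tau_{j-1},\tau_j])
\]
of dimension greater than $q$ almost surely.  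Set
\[
 K_j:=\bigl((x_j+K_{j-1})\cap W([\tau_{j-1},\tau_j])\bigr)-x_j .
\]
Then $K_j\subseteq K_{j-1}$, $K_j$ is compact and $\mathcal F_{\tau_j}$-measurable, and $\dim_HK_j>q$ almost surely, so the induction continues.  By the nesting, every $a\in K_j$ satisfies $a+x_i\in B$ for all $i\le j$.

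\textbf{Main obstacle.}  The delicate point is running the induction over infinitely many steps while keeping only countably many null sets.  The lemma's hypothesis that $\dim_HK_{j-1}>q$ holds almost surely is guaranteed by its conclusion at step $j-1$.  Each step discards one null event, and these are countably many, so on a single probability-one event every $K_j$ is nonempty.  Compactness and nestedness then give a nonempty intersection; this is the only place where compactness, rather than mere dimension, is used.

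Two technical details remain.  First, the lemma requires $\tau_{j-1}$ to be almost surely finite, which it is at each step.  Second, we need no control on $\sup_j\tau_j$: the points $a+x_i$ lie on $W([\tau_{i-1},\tau_i])$, a subset of $B$, whether or not $\tau_j$ tends to infinity.  The final statement, that the trace contains a translate of $F$, is immediate from $T_F\neq\varnothing$.
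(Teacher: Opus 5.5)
Your proposal is correct and is essentially the paper's proof. Both build nested compact sets $K_n=\bigl((x_n+K_{n-1})\cap W([\tau_{n-1},\tau_n])\bigr)-x_n$ by repeatedly applying Lemma~\ref{lem:intersection} on successive time intervals, get a nonempty intersection by compactness, and then intersect over a countable base of disks. The differences are cosmetic: you treat finite $F$ through Corollary~\ref{cor:planartranslates}, whereas the paper enumerates with repetitions, and you use a general threshold $q\in(0,2)$ where the paper takes $q=1$.
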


\begin{proof}
If $F=\varnothing$ then $T_F=\R^2$.  Otherwise enumerate
$F=\{x_1,x_2,\ldots\}$, with repetitions if $F$ is finite.  Fix a
nonempty open set $V\subseteq\R^2$, a closed disk $K_0\subset V$ of positive
radius, and put $\tau_0=0$.  Given an $\mathcal F_{\tau_{n-1}}$-measurable
compact $K_{n-1}$ with $\dim_HK_{n-1}>1$, apply
Lemma~\ref{lem:intersection} with $q=1$ after $\tau_{n-1}$ to
$x_n+K_{n-1}$: it gives a finite stopping time $\tau_n>\tau_{n-1}$ such
that
\[
K_n:=\bigl((x_n+K_{n-1})
       \cap W([\tau_{n-1},\tau_n])\bigr)-x_n
\]
is an $\mathcal F_{\tau_n}$-measurable compact subset of $K_{n-1}$ with
$\dim_HK_n>1$.  All countably many stages succeed on one probability-one
event, on which $K_0\supseteq K_1\supseteq\cdots$ are nonempty compact
sets, so $\bigcap_nK_n\ne\varnothing$.  Any $a$ in this intersection
satisfies $a+x_n\in W([\tau_{n-1},\tau_n])\subseteq B$ for every $n$, so
$a\in T_F\cap V$.  Intersecting over a countable base of open sets $V$
proves density.  The intersection of the nested compact sets $K_n$ is
nonempty, but its Hausdorff dimension may be zero; this is why the argument
gives density rather than a dimension bound.
\end{proof}

\begin{remark}
The finite time intervals are what make the compactness argument work: the
nested sets $T_{F_N}=\{a:a+\{x_1,\ldots,x_N\}\subseteq B\}$ need not be
compact, because $B$ is not closed.  Indeed, $B$ is dense by neighbourhood
recurrence \cite[Thm.~3.20]{PM}, while every fixed point is polar at positive
times \cite[Cor.~2.26]{PM}, so Tonelli's theorem gives
$\mathbb E\bigl[\operatorname{Leb}(B\cap[-R,R]^2)\bigr]
=\int_{[-R,R]^2}\mathbb P(x\in B)\,dx=0$ for every $R$; hence
$\operatorname{Leb}(B)=0$ almost surely, and a dense closed subset of
$\R^2$ would be all of $\R^2$.  Each $K_n$ above, in contrast, is defined
through the range over a finite time interval and is compact.
\end{remark}

\subsection{Infinite progressions}\label{subsec:infinite}

Let
\[
\mathcal A_\infty
:=\bigl\{(a,\vec d)\in\R^2\times(\R^2\setminus\{0\}):
          a+j\vec d\in B\text{ for every }j\geq0\bigr\}.
\]
In the path-space notation above,
\[
 \mathfrak A_\infty
 :=\{(\omega,a,\vec d):(a,\vec d)\in\mathcal A_\infty(\omega)\}
 =\bigcap_{j=0}^{\infty}
   \{(\omega,a,\vec d):\vec d\ne0,\ a+j\vec d\in B(\omega)\}
\]
is Borel, so $\mathcal A_\infty(\omega)$ is Borel for every path $\omega$.

\begin{theorem}[infinite progressions]\label{thm:infiniteAP}
For each fixed $\vec d\neq0$, the fibre
\[
(\mathcal A_\infty)_{\vec d}
:=\{a\in\R^2:a+j\vec d\in B\text{ for every }j\geq0\}
\]
is almost surely dense in $\R^2$.  Moreover, there is an event $\Omega_0$
of probability one on which the fibre is dense for all $\vec d$ outside a
Lebesgue-null set that may depend on the path; on $\Omega_0$,
$\mathcal A_\infty$ is dense in $\R^4$ and
\[
2\leq\dim_H\mathcal A_\infty\leq4.
\]
\end{theorem}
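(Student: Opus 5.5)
The proof has four parts: the fixed-$\vec d$ density, a Tonelli upgrade to almost every $\vec d$ on one event, density of $\mathcal A_\infty$ in $\R^4$, and the dimension bounds.

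\emph{Fixed gap vector.} For a fixed $\vec d\ne0$, the fibre $(\mathcal A_\infty)_{\vec d}$ is exactly $T_F$ for the deterministic countable set $F=\{j\vec d:j\ge0\}$. Theorem~\ref{prop:countable} therefore gives its almost sure density directly.

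\emph{Almost every gap vector.} Fix a rational open disk $U\in\mathcal U$ and $R\in\N$, and consider the random set
\[
 N_{U,R}:=\{\vec d\in B(0,R)\setminus\{0\}:(\mathcal A_\infty)_{\vec d}\cap U=\varnothing\}.
\]
I would apply Tonelli to the indicator of
\[
 \{(\omega,\vec d):\exists a\in U,\ (\omega,a,\vec d)\in\mathfrak A_\infty\}.
\]
This set is the projection of the Borel set $\mathfrak A_\infty\cap(\Omega_{\mathrm{path}}\times U\times\R^2)$, so it is analytic and hence universally measurable. Its complement is measurable for the completed product measure, which is exactly the analytic-set fact flagged in the measurability paragraph. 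Tonelli gives
\[
 \mathbb E\,\operatorname{Leb}(N_{U,R})=\int_{B(0,R)}\mathbb P\bigl((\mathcal A_\infty)_{\vec d}\cap U=\varnothing\bigr)\,d\vec d=0,
\]
since the integrand vanishes for every $\vec d\neq0$ by the first step. Hence $\operatorname{Leb}(N_{U,R})=0$ almost surely. Intersecting over the countably many $U$ and $R$ defines $\Omega_0$. On $\Omega_0$, the exceptional set $N_B=\{0\}\cup\bigcup_{U,R}N_{U,R}$ is Lebesgue null, and every $\vec d\notin N_B$ has a fibre meeting every $U$, so that fibre is dense.

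\emph{Density in $\R^4$.} Take a nonempty product $U\times V$ of rational disks. Since $V\setminus N_B$ has positive measure, it is nonempty. Choosing $\vec d$ in it and $a\in(\mathcal A_\infty)_{\vec d}\cap U$ gives $(a,\vec d)\in\mathcal A_\infty\cap(U\times V)$. Every nonempty open subset of $\R^4$ contains such a product, so $\mathcal A_\infty$ is dense.

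\emph{Dimension bounds.} The upper bound $\dim_H\mathcal A_\infty\le4$ is trivial because $\mathcal A_\infty\subseteq\R^4$. For the lower bound I would use Lemma~\ref{lem:fibre} with $s=2$ and $t=0$. Take $Y$ to be a compact subset of $\R^2\setminus N_B$ with positive Lebesgue measure; it exists by inner regularity, and $\mathcal A_\infty$ is Borel for each path. Every fibre over $Y$ is nonempty, so the lemma gives $\dim_H\mathcal A_\infty\ge2$. Equivalently, the Lipschitz projection of $\mathcal A_\infty$ to gap space contains a set of positive area.

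The main obstacle is the measurability needed for Tonelli. The fibres here need not be $\sigma$-compact, so the section-dimension theorem used before is unavailable. The plan only ever uses the event that a fibre meets $U$, which is analytic as a projection. A stronger statement, such as full fibre dimension, would require finer measurability and is deliberately not attempted.
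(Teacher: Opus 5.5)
Your proposal is correct and follows the paper's proof essentially step by step. The fixed-$\vec d$ case comes from Theorem~\ref{prop:countable}. Tonelli is applied to the analytic, hence universally measurable, set of pairs $(\omega,\vec d)$ whose fibre meets a rational disk. Density in $\R^4$ comes from products $U\times V$. The bound $\dim_H\mathcal A_\infty\ge2$ comes from the Lipschitz projection to gap space, which is exactly the $t=0$ case of Lemma~\ref{lem:fibre} that you invoke.
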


\begin{proof}
Fix $\vec d\neq0$.  The countable set $F_{\vec d}:=\{j\vec d:j\geq0\}$ is
deterministic, so by Theorem~\ref{prop:countable} its translation set,
which is $(\mathcal A_\infty)_{\vec d}$, is almost surely dense.

For the second assertion, let $U$ be a rational closed disk and
\[
 G_U:=\{(\omega,\vec d):\vec d\neq0,
       (\mathcal A_\infty(\omega))_{\vec d}\cap U\neq\varnothing\},
\]
the set of pairs $(\omega,\vec d)$ for which the path contains an infinite
progression with gap vector $\vec d$ and starting point in $U$.  It is the
projection of the Borel set
$\{(\omega,\vec d,a):(\omega,a,\vec d)\in\mathfrak A_\infty,\ a\in U\}$,
hence analytic and therefore measurable with respect to the completed
product of Wiener and Lebesgue measure \cite[Thm.~21.10]{Kechris}.  By
the first part, $\mathbb P((\omega,\vec d)\notin G_U)=0$ for every fixed
$\vec d\neq0$, so Tonelli's theorem gives, for every $R\geq1$,
\[
 \mathbb E\Bigl[
  \operatorname{Leb}\{\vec d\in B(0,R)\setminus\{0\}:
                  (\omega,\vec d)\notin G_U\}\Bigr]
 =\int_{B(0,R)\setminus\{0\}}
   \mathbb P((\omega,\vec d)\notin G_U)\,d\vec d=0 ,
\]
and the random Lebesgue measure inside is zero almost surely.  Intersecting
over $R\in\N$ and the countably many $U$ gives $\Omega_0$, on which, for
Lebesgue-almost every $\vec d\neq0$, the fibre meets every rational closed
disk and is therefore dense.

On $\Omega_0$ the projection of $\mathcal A_\infty$ to gap space contains a
set of full Lebesgue measure, hence has dimension two; the projection is
Lipschitz, so $\dim_H\mathcal A_\infty\geq2$, and the upper bound is
trivial.  Finally, a nonempty open subset of $\R^4$ contains a product
$U\times V$ of nonempty open subsets of translation and gap space; almost
every $\vec d\in V$ has a dense fibre, which meets $U$, so
$\mathcal A_\infty$ meets $U\times V$.
\end{proof}

\section{A dimensional threshold for Brownian patterns}
\label{section:Rd}

From dimension three on, patterns on the trace are governed by the number
of effective parameters with which they can be placed: redundant parameters
must be removed, the scale parameter counts like every other parameter, and
a subconfiguration can be overdetermined even when the full parameter count
is favourable.  The criterion below therefore involves the ranks of all
subconfigurations.  The prototype is the theorem of Benjamini and Kozma.

\begin{theorem}[Benjamini--Kozma {\cite[Lemmas~2 and 4]{BK}}]\label{thm:BK}
The trace of Brownian motion in $\R^3$ almost surely contains an arithmetic
progression of length $5$ and almost surely contains none of length $6$.
\end{theorem}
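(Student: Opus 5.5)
The plan is to prove both halves directly in the case $d=3$, $D=2d=6$, where a $k$-term progression has full excess $\Delta=6-k$. We parametrise progressions by $(a,\vec d)\in\R^3\times(\R^3\setminus\{0\})$. By scale invariance and Brownian scaling, it suffices to work with $a$ in a fixed compact set and $|\vec d|\in[1,2]$, with $a+j\vec d$ at distance comparable to one from the origin. Ergodicity of the scaling flow (the tail/zero--one law under $W\mapsto\lambda^{-1}W(\lambda^2\cdot)$) then upgrades positive probability of occurrence at some scale to probability one.

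\emph{Existence for $k=5$.} We run a second-moment argument at resolution $\eps$. Partition parameter space into cells of size $\eps$ and let $Z_\eps$ count cells for which every ball $B(a+j\vec d,\eps)$, $0\le j\le4$, is hit by $W$ within a fixed time horizon.

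\begin{itemize}
\item \emph{First moment.} The cell count is $\eps^{-6}$. The probability of hitting five separated $\eps$-balls is $\asymp\eps^{5}$, by the one-ball formula applied successively with the strong Markov property, summed over the $5!$ visiting orders. Hence $\mathbb E Z_\eps\asymp\eps^{-1}$.
\item \emph{Second moment.} For two cells at parameter distance $\rho\ge\eps$, any two distinct labels determine the parameters. So at most one pair of labelled points can be within $O(\rho)$ of each other, and all others are separated by $\gtrsim\rho$. The joint hitting probability of the ten balls is then bounded by $\eps^{10}\rho^{-\#\{\text{shared}\}}$, with the number of shared points at most one. The $\rho$-shell contains $\rho^{5}\eps^{-6}$ cells. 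Summing gives $\mathbb E Z_\eps^2\lesssim(\mathbb E Z_\eps)^2+\eps^{-6}\sum_\rho\rho^{5}\eps^{-6}\eps^{10}\rho^{-1}$, which is $O(\eps^{-2})$.
\end{itemize}

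Paley--Zygmund then gives $\mathbb P(Z_\eps>0)\ge c>0$ uniformly in $\eps$. Because we use a fixed time horizon, the range is compact, and a subsequence of approximate progressions (with $|\vec d|\ge1$) converges to an exact one. Fatou then gives positive probability of an exact progression.

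\emph{Nonexistence for $k=6$.} Here $\Delta=0$. Every proper subset of at least two labels has rank $6$ and excess $6-\#I>0$, so the obstruction is the full pattern and the first moment only gives $\mathbb E Z_\eps\asymp1$. This is not enough, so we need a capacity argument. We first reduce to six independent Brownian paths, one per label. Split the path at rational times and use Brownian bridges: the law of the path segments between consecutive hits is absolutely continuous with respect to independent pieces, locally. Nonoccurrence then reduces to showing that the $6$-dimensional set $\Gamma=\{(a+j\vec d)_{j=0}^{5}\}\subset(\R^3)^6$ is polar for the $6$-parameter additive process $(W_0(t_0),\dots,W_5(t_5))$. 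Each coordinate is hit like a set of codimension $1$ in $\R^3$, so we need $\Gamma$ to have zero capacity for the product kernel $\prod_j|x_j-y_j|^{-1}$. The energy of any measure on $\Gamma$ is then estimated in parameter coordinates. Since $|x_j-y_j|\lesssim|(a,\vec d)-(a',\vec d')|=:\rho$, the kernel is at least $\rho^{-6}$ near the diagonal, and a $6$-dimensional parameter set always has infinite $\rho^{-6}$-energy. The degree-$6$ homogeneity matches exactly, so equality produces the logarithmic divergence.

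The main obstacle I expect is the nonexistence half at equality $\Delta=0$. Zero product capacity must be converted into almost-sure avoidance by a single trace. This needs a Kakutani-type theorem for the additive Brownian process (Khoshnevisan--Xiao style, product capacity with the Newtonian kernel in each factor). It also needs careful bookkeeping over all orderings in which the single path could visit the six points, which is where the Brownian-bridge absolute-continuity comparison must be made uniform.
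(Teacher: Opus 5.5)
Your architecture matches the paper's: for $k=5$, a second-moment count at resolution $\eps$, Paley--Zygmund, compactness and the scaling zero--one law; for $k=6$, Brownian bridges plus a product Newtonian energy that diverges logarithmically on a $6$-dimensional parameter set. However, your second-moment step contains a false estimate.

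You correctly note that two labels determine $(a,\vec d)$, so at most one label can be displaced by much less than $\rho=|\omega-\omega'|$. But the other four labels are then displaced by $\asymp\rho$, not by $\asymp1$, and their two target balls are strongly correlated. A path that has entered $B(x_i,\eps)$ reaches $B(y_i,\eps)$ with probability $\asymp\eps/|x_i-y_i|$, not $\eps$. The correct joint bound is \eqref{eq:pairedhit}:
\[
 \mathbb P(H_x\cap H_y)\lesssim\eps^{5}\prod_{i=1}^{5}\frac{\eps}{|x_i-y_i|+\eps}.
\]
For five labels each displaced by $\asymp\rho\ll1$ this is $\asymp\eps^{10}\rho^{-5}$; visiting $x_1,y_1,x_2,y_2,\dots$ in turn gives the matching lower bound. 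This is far larger than your $\eps^{10}$ or $\eps^{10}\rho^{-1}$.

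The count has to be redone with the flag estimate. There are at most $\eps^{-6}\delta^3\rho^3$ net points $\omega'$ whose closest label is displaced by $\asymp\delta$ and the rest by $\asymp\rho$. Summing over dyadic scales gives
\[
 \mathbb E Z_\eps^2\lesssim\eps^{-6}\sum_{\eps\lesssim\delta\le\rho\lesssim1}
 \eps^{-6}\delta^3\rho^3\cdot\eps^5\cdot\frac{\eps}{\delta}\Bigl(\frac{\eps}{\rho}\Bigr)^4
 =\eps^{-2}\sum_{\delta\le\rho}\delta^2\rho^{-1}\lesssim\eps^{-2},
\]
so your conclusion survives. It survives only because every prefix excess $\Delta_{I_j}=2,4,3,2,1$ is positive, which is exactly Lemma~\ref{lem:resonance}. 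With your bound the correlations vanish, and the full excess alone would appear to control the second moment. That is false for general families, and it is why Theorem~A needs every $\Delta_I$. So this is a real error in the argument, even though it can be repaired here.

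Your nonexistence sketch also omits configurations through the starting point. The bridge comparison tests hits at distinct \emph{positive} times, starting from a vector with a Lebesgue density. A $6$-term progression with one term at $W(0)=0$ uses time $0$, so it is not covered. The paper treats this case separately in Proposition~\ref{prop:obstruction}. Points are polar, so at most one term sits at the origin. The other five terms then range over a $3$-parameter family (with $a=-j_0\vec d$), and $3<5=(d-2)\cdot5$, so this family is polar by the same capacity lemma.

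Finally, your worry about making the bridge comparison uniform over visiting orders is unnecessary. As in Lemma~\ref{lem:onepathcapacity}, it is enough to cover the distinct time tuples by countably many products of disjoint rational intervals and use plain absolute continuity on each product, since null events are preserved.
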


\subsection{Hitting estimates in \texorpdfstring{$\R^d$}{Rd}}
\label{subsec:prelim}

Let $W=(W(t))_{t\ge0}$ be Brownian motion in $\R^d$, $d\ge3$, started at
the origin, let $B:=W([0,\infty))$ be its trace, and put
\[
q:=d-2 .
\]
We keep the probability conventions of Section~\ref{section:plane}.
Brownian motion is invariant under orthogonal maps \cite[Lem.~12.11]{PM}
and transient: $|W(t)|\to\infty$ almost surely \cite[Thm.~3.20]{PM}.
Transience and path continuity make $B$ closed, although unbounded: if
$W(t_n)$ converges in $\R^d$, then $(t_n)$ is bounded, and a convergent
subsequence of times puts the limit in $B$.  Points are \emph{polar}: for
every $x\in\R^d$, $\mathbb P_0\{W(t)=x\text{ for some }t>0\}=0$, by
\cite[Cor.~2.26]{PM}, Brownian scaling and a countable union over integer
time intervals; in particular the trace visits the origin only at time $0$.
The exact ball-hitting formula is
\begin{equation}\label{eq:ballformula}
 \mathbb P_z\bigl(B\cap\overline{B(w,\eps)}\ne\varnothing\bigr)
 =\left(\frac{\eps}{|z-w|}\right)^q,
 \qquad |z-w|>\eps,
\end{equation}
see \cite[Cor.~3.19]{PM}.  Thus $q$ is the hitting codimension of a point:
a labelled subconfiguration $I$ has probabilistic cost $q\,\# I$, to be
compared with the rank $r_I$ counting the parameters available to place it.

\begin{lemma}[single-ball hitting]\label{lem:single}
For $z,w\in\R^d$ and $\eps>0$ set
\[
g_\eps(z,w):=\left(\frac{\eps}{|z-w|+\eps}\right)^q.
\]
Then
\[
g_\eps(z,w)\le
\mathbb P_z\bigl(B\cap B(w,\eps)\ne\varnothing\bigr)
\le 2^qg_\eps(z,w),
\]
and the same comparison holds uniformly when the starting point is moved by
at most $\eps$.
\end{lemma}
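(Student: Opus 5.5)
The plan is to derive both inequalities from the exact formula \eqref{eq:ballformula}, comparing the open ball $B(w,\eps)$ with closed balls of nearby radii. Put $\rho:=|z-w|$.

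For the upper bound, first take $\rho>\eps$. The event of hitting the open ball $B(w,\eps)$ is contained in the event of hitting $\overline{B(w,\eps)}$. Hence \eqref{eq:ballformula} bounds the probability by $(\eps/\rho)^q$. Since $\rho+\eps\le2\rho$ in this range, $(\eps/\rho)^q\le 2^q(\eps/(\rho+\eps))^q=2^qg_\eps(z,w)$. If instead $\rho\le\eps$, then $g_\eps\ge 2^{-q}$, so $2^qg_\eps\ge1$ and the upper bound is trivial.

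For the lower bound, first take $\rho\ge\eps$. For every $\eps'<\eps$ we have $\overline{B(w,\eps')}\subseteq B(w,\eps)$ and $\rho>\eps'$. Then \eqref{eq:ballformula} gives probability at least $(\eps'/\rho)^q$. Letting $\eps'\uparrow\eps$ yields $(\eps/\rho)^q\ge(\eps/(\rho+\eps))^q$. If $\rho<\eps$, the path starts inside the open ball at time $0$, so the probability is $1\ge g_\eps$.

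For the uniformity statement, I read it as saying that if $|z'-z|\le\eps$ then $\mathbb P_{z'}(B\cap B(w,\eps)\ne\varnothing)$ is comparable to $g_\eps(z,w)$ with constants depending only on $d$. The key step is to check that $g_\eps(z',w)$ and $g_\eps(z,w)$ are comparable. Because $|z'-w|+\eps$ lies between $(|z-w|+\eps)-\eps$ and $(|z-w|+\eps)+\eps$, and $|z-w|+\eps\ge\eps$, we get
\[
\tfrac12\le\frac{|z'-w|+\eps}{|z-w|+\eps}\le 2 .
\]
The lower endpoint needs care: the bound $|z'-w|+\eps\ge\tfrac12(|z-w|+\eps)$ holds when $|z-w|\ge\eps$ by the triangle inequality. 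When $|z-w|<\eps$, we have $|z'-w|+\eps\ge\eps\ge\tfrac12(|z-w|+\eps)$ directly. Therefore $g_\eps(z',w)$ lies between $2^{-q}g_\eps(z,w)$ and $2^qg_\eps(z,w)$. Applying the two-sided bound at $z'$ then gives the comparison with $g_\eps(z,w)$, with constants $2^{-q}$ and $4^q$.

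There is no real obstacle. The only point requiring care is the open-versus-closed ball boundary case, which the limit $\eps'\uparrow\eps$ handles; the reader should note that the moved-start comparison carries the constants $2^{\pm q}$ rather than the sharp ones.
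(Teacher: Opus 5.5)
Your proof is correct and follows the paper's argument: both reduce to the exact formula \eqref{eq:ballformula} when $|z-w|\ge\eps$, use $|z-w|<|z-w|+\eps\le 2|z-w|$, and get the moved-start version from the factor-two comparability of $|z-w|+\eps$ and $|z'-w|+\eps$. The only cosmetic difference is that the paper writes the open-ball event as an increasing union of closed-ball events to get the exact value $(\eps/|z-w|)^q$. You instead sandwich that probability between closed-ball probabilities, which amounts to the same thing.
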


\begin{proof}
Let $r=|z-w|$.  If $r<\eps$ the ball is hit at time zero.  If $r\ge\eps$,
the event of hitting $B(w,\eps)$ is the increasing union over
$\rho\uparrow\eps$ of the events of hitting $\overline{B(w,\rho)}$, so by
\eqref{eq:ballformula} it has probability $(\eps/r)^q$; since
$r<r+\eps\le2r$, this lies between $g_\eps(z,w)$ and $2^qg_\eps(z,w)$.  If
$|z-z'|\le\eps$, then $|z-w|+\eps$ and $|z'-w|+\eps$ differ by at most a
factor of two, which gives the uniform version.
\end{proof}

\begin{notation}\label{not:asymp}
$A\lesssim B$ means $A\le CB$ for a constant $C$ independent of the small
scale $\eps$; $A\gtrsim B$ means the reverse inequality, and $A\asymp B$
means both.  Constants may depend on $d$, on the number of points and on a
fixed compact parameter chart, but never on $\eps$.
\end{notation}

We use the Paley--Zygmund inequality \cite[Lem.~3.23]{PM} in the form
\begin{equation}\label{eq:PZ}
\mathbb P(Z>0)\ge\frac{\mathbb E(Z)^2}{\mathbb E(Z^2)}
\qquad(Z\ge0,\ 0<\mathbb E(Z^2)<\infty),
\end{equation}
which is Cauchy--Schwarz applied to
$\mathbb EZ=\mathbb E(Z\mathbf1_{\{Z>0\}})$.

\subsection{Several targets and two nearby copies}\label{subsec:master}

Keeping the labels in disjoint spatial boxes prevents a second
parametrisation of the same configuration, such as the reversal of a
progression, from creating extra multiplicities in the pair count.

\begin{lemma}[multi-ball and paired-copy estimates]\label{lem:master}
Fix $k$ and $a>0$, and let $V_1,\dots,V_k$ be fixed bounded sets such that
\[
 \operatorname{dist}(V_i,0)\ge a,
 \qquad \operatorname{dist}(V_i,V_l)\ge a\quad(i\ne l).
\]
For $x_i\in V_i$, let $H_x$ be the event that all balls $B(x_i,\eps)$ are
hit.  Then, uniformly for $0<\eps<a/10$,
\begin{equation}\label{eq:onecopyhit}
 \mathbb P(H_x)\asymp\eps^{qk},
\end{equation}
and for any second choice $y_i\in V_i$,
\begin{equation}\label{eq:pairedhit}
 \mathbb P(H_x\cap H_y)
 \lesssim
 \eps^{qk}\prod_{i=1}^k g_\eps(x_i,y_i).
\end{equation}
\end{lemma}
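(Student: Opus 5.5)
We prove both estimates by decomposing according to the order in which the balls are first hit, and applying the strong Markov property at each successive hitting time. Lemma~\ref{lem:single} and its uniform version supply the single-step bounds.

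\emph{One copy.} For the upper bound in \eqref{eq:onecopyhit}, fix a permutation $\pi$ of $\{1,\dots,k\}$. Consider the event that the balls $B(x_{\pi(1)},\eps),\dots,B(x_{\pi(k)},\eps)$ are first hit in this order. Let $\tau_j$ be the first hitting time of the $j$-th ball after $\tau_{j-1}$. At time $\tau_{j-1}$ the path lies within $\eps$ of $x_{\pi(j-1)}$, and $|x_{\pi(j-1)}-x_{\pi(j)}|\ge a$. The strong Markov property and the uniform version of Lemma~\ref{lem:single} therefore bound the conditional probability of the next hit by $2^q(\eps/(a/2))^q\lesssim\eps^q$. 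The first step starts at $0$ with $\operatorname{dist}(V_i,0)\ge a$, so it is bounded the same way. Multiplying over the $k$ steps and summing over the $k!$ orders gives $\lesssim\eps^{qk}$.

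For the lower bound, use one fixed order $1,\dots,k$. Each conditional hitting probability is at least $g_\eps$ evaluated at distance at most $\operatorname{diam}(\bigcup V_i\cup\{0\})+\eps$. This quantity is bounded uniformly because the $V_i$ are bounded, so each step contributes $\gtrsim\eps^q$.

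\emph{Two copies.} The event $H_x\cap H_y$ requires hitting all $2k$ balls $B(x_i,\eps)$ and $B(y_i,\eps)$. Sum over orderings of the $2k$ targets, taking the first hit of each ball in the order in which they are first reached. The bound for one fixed order is a product of single-step factors. A step between targets carrying different labels $i\ne l$, or the first step from $0$, costs $\lesssim\eps^q$, since the relevant points are at distance $\gtrsim a$ once $\eps<a/10$. A step from a ball with label $i$ to the other ball with the same label $i$ costs $\lesssim g_\eps(x_i,y_i)$ by the uniform version of Lemma~\ref{lem:single}, because the starting point is within $\eps$ of $x_i$ or $y_i$ and $g_\eps$ is symmetric.

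The key step is the bookkeeping that shows every ordering yields at most $\eps^{qk}\prod_i g_\eps(x_i,y_i)$. For each label $i$, the ball among $B(x_i,\eps),B(y_i,\eps)$ that is reached second contributes a factor bounded by $\min(C\eps^q,\,C g_\eps(x_i,y_i))$, depending on whether it was reached from the same label or not. Since $g_\eps(x_i,y_i)\ge(\eps/(\operatorname{diam}+\eps))^q\gtrsim\eps^q$ on bounded sets, we have $\eps^q\lesssim g_\eps(x_i,y_i)$, and either case is $\lesssim g_\eps(x_i,y_i)$. The ball with label $i$ that is reached first is necessarily entered from a different label or from $0$, hence costs $\lesssim\eps^q$.

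Altogether the $2k$ factors split into $k$ first-visit factors, each $\lesssim\eps^q$, and $k$ second-visit factors, each $\lesssim g_\eps(x_i,y_i)$. Summing over the $(2k)!$ orderings changes only the constant.

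One technical point needs care. When $|x_i-y_i|<2\eps$ the two balls overlap, and a ``second visit'' may occur at time zero of the step. In that case we simply bound the factor by $1\le 4^q g_\eps(x_i,y_i)$. Ties among hitting times are handled by breaking them arbitrarily, since we only need an upper bound on a union.

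I expect the main obstacle to be this bookkeeping: making sure that same-label transitions are the only ones allowed to be cheap, and that the uniform version of Lemma~\ref{lem:single} legitimately applies when the restart point is anywhere on the boundary of the previous ball.
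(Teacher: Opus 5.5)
Your proposal is correct and follows essentially the same route as the paper: decompose by first-hit order, apply the strong Markov property with the one-ball bounds at each transition, and use $g_\eps(x_i,y_i)\gtrsim\eps^q$ on bounded sets to absorb second visits that are not reached from the same label. The differences are cosmetic. The paper discards the second target for labels with $|x_i-y_i|\le2\eps$, so the remaining closed targets are disjoint and no ties occur, and it uses closed balls of radius $\eps/4$ for the lower bound. You instead keep all $2k$ targets, bound overlapping-pair factors by $1\lesssim g_\eps(x_i,y_i)$, and organise the count as first versus second visits; your ``$\min$'' should read ``either/or'', i.e.\ a maximum, which is still $\lesssim g_\eps(x_i,y_i)$.
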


\begin{proof}
All constants depend only on $d,k,a$ and the sets $V_1,\ldots,V_k$.  If
$u\in V_i$, $v\in V_l$ with $i\ne l$, and $z\in\overline{B(u,\eps)}$, then
$|z-v|\ge a-\eps$, so \eqref{eq:ballformula} gives
\begin{equation}\label{eq:differentlabeltransition}
 \mathbb P_z\bigl(B\cap\overline{B(v,\eps)}\ne\varnothing\bigr)
 \le \left(\frac{\eps}{a-\eps}\right)^q
 \le C\eps^q,
\end{equation}
also for $z=0$.  Conversely, boundedness of the $V_i$ and
\eqref{eq:ballformula} with radius $\eps/4$ give
\begin{equation}\label{eq:differentlabeltransitionlower}
 \mathbb P_z\bigl(B\cap\overline{B(v,\eps/4)}\ne\varnothing\bigr)
 \ge c\eps^q
\end{equation}
for $z=0$ and for $z$ within $\eps/4$ of a centre carrying a different
label.  Both estimates are uniform in the centres.

\emph{Lower bound in \eqref{eq:onecopyhit}.}  Put $\tau_0=0$ and
\[
 \tau_i:=\inf\{t\ge\tau_{i-1}:
 W(t)\in\overline{B(x_i,\eps/4)}\},
 \qquad 1\le i\le k,
\]
with all later times infinite once one of them is.  By the strong Markov
property and \eqref{eq:differentlabeltransitionlower}, each of the $k$
transitions has conditional probability at least $c\eps^q$, so
$\mathbb P(\tau_k<\infty)\ge c^k\eps^{qk}$; and $\{\tau_k<\infty\}\subseteq H_x$
because the closed balls of radius $\eps/4$ lie inside the open balls of
radius $\eps$.

\emph{Upper bound in \eqref{eq:onecopyhit}.}  On $H_x$ all the closed balls
$\overline{B(x_i,\eps)}$ are hit; they are pairwise disjoint, so their
first-hit order has no ties.  Fix an order and drop the requirement that
later balls are not hit earlier.  Iterating
\eqref{eq:differentlabeltransition} with the strong Markov property bounds
the resulting sequential hitting event by $C^k\eps^{qk}$; summing over the
at most $k!$ orders proves the upper bound.

\emph{Proof of \eqref{eq:pairedhit}.}  Put
\[
 R:=\{i:|x_i-y_i|>2\eps\},\qquad s:=\# R.
\]
For $i\notin R$ we have $g_\eps(x_i,y_i)\ge3^{-q}$, so we may drop the
second target ball $B(y_i,\eps)$ for such labels at the cost of a constant.
The $k+s$ remaining closed targets are pairwise disjoint; fix their
first-hit order.  A transition from the origin to a target, or between
different labels, is bounded by $C\eps^q$ as above.  For an equal-label pair
with $|x_i-y_i|>2\eps$ and $z\in\overline{B(x_i,\eps)}$,
\eqref{eq:ballformula} gives
\[
 \mathbb P_z\bigl(B\cap\overline{B(y_i,\eps)}\ne\varnothing\bigr)
 \le \left(\frac{\eps}{|x_i-y_i|-\eps}\right)^q
 \le 3^q g_\eps(x_i,y_i),
\]
and the same holds with $x_i,y_i$ interchanged.  Suppose the two
equal-label targets are consecutive exactly for the labels in
$J\subseteq R$, and put $c=\#J$.  Among the $k+s$ transitions (counting the
initial one from the origin) there are $c$ equal-label transitions and
$k+s-c$ others, so this order contributes at most
$C\eps^{q(k+s-c)}\prod_{i\in J}g_\eps(x_i,y_i)$.  Since all centres lie in a
fixed bounded set, $g_\eps(x_i,y_i)\ge c_0\eps^q$ for $i\in R\setminus J$,
and hence
\[
 \eps^{q(k+s-c)}\prod_{i\in J}g_\eps(x_i,y_i)
 =\eps^{qk}\prod_{i\in J}g_\eps(x_i,y_i)
   \prod_{i\in R\setminus J}\eps^q
 \le C\eps^{qk}\prod_{i\in R}g_\eps(x_i,y_i).
\]
Summing over the at most $(2k)!$ orders and restoring the factors with
$i\notin R$, each bounded below by $3^{-q}$, proves \eqref{eq:pairedhit}.
\end{proof}

\subsection{Pattern charts and effective dimension}\label{subsec:family}

The next definitions record the number $D$ of independent parameters of a
placed pattern and, for every set of labels $I$, the dimension $r_I$ of the
possible placements of that subconfiguration.  The probabilistic argument
depends on the family only through these ranks.

Write $\operatorname{Conf}_k(\R^d)$ for the ordered configurations of $k$
distinct points in $\R^d$.  Unordered families are treated through local
lifts to this space; different lifts are harmless because the atlas is
countable.  A family is \emph{scale invariant} if it contains
$(x_1,\ldots,x_k)$ exactly when it contains
$(\lambda x_1,\ldots,\lambda x_k)$, for every $\lambda>0$.  A configuration
is \emph{nonanchored} if none of its points is the origin.  A $C^1$ map has
\emph{constant rank} if its differential has the same rank at every point
of its domain; it is a \emph{submersion} if its differential is surjective
at every point, and an \emph{embedding} if it is injective, its
differential is injective at every point, and it is a homeomorphism onto
its image.  By the constant-rank theorem, a map of constant rank $r$
sends a small neighbourhood of any point onto an $r$-dimensional $C^1$
submanifold.

\begin{definition}[normalised pattern chart]\label{def:family}
Let $U\subset\R^D$ be open, let
\[
 \Phi=(\phi_1,\dots,\phi_k):U\longrightarrow
 \operatorname{Conf}_k(\R^d)
\]
be $C^1$, and let $K\subset U$ be a compact parameter cube with nonempty
interior.  We call $(K,\Phi)$ a \emph{normalised rank chart} if there is
$a>0$ such that \ref{N1}--\ref{N2} hold, and a \emph{normalised
positive-excess chart} if \ref{N3} holds as well.
\begin{enumerate}[label=\normalfont(N\arabic*)]
\item\label{N1} \emph{Geometry and labels:} for all $\omega,\omega'\in K$,
\[
 a\le |\phi_i(\omega)|\le a^{-1},\qquad
 |\phi_i(\omega)-\phi_l(\omega')|\ge a\quad(i\ne l),
\]
and the image of each single label has diameter at most $a/10$.
\item\label{N2} \emph{Effective parametrisation:} $\Phi$ is an embedding on
a neighbourhood of $K$, every $\phi_i$ is a submersion, and every submap
$\Phi_I=(\phi_i)_{i\in I}$ has constant rank $r_I$ there.  By
Lemma~\ref{lem:measurebox}, if $\pi$ is a permutation,
$I_j=\{\pi(1),\dots,\pi(j)\}$, $0<t_1\le\cdots\le t_k\le1$ and
$\omega\in K$, then
\begin{equation}\label{eq:flagbox}
\operatorname{Leb}\bigl\{\omega'\in K:
 |\phi_{\pi(j)}(\omega')-\phi_{\pi(j)}(\omega)|\le t_j,
 \ 1\le j\le k\bigr\}
 \lesssim\prod_{j=1}^k t_j^{\,r_{I_j}-r_{I_{j-1}}},
 \qquad r_\varnothing=0,
\end{equation}
with a constant depending only on the chart and on the maps $\phi_i$ on a
fixed neighbourhood of $K$, uniform in $\omega$, in the radii $t_j$ and in
the finitely many permutations $\pi$.
\item\label{N3} \emph{Proper subconfigurations:}
\[
 \Delta_I:=r_I-q\,\# I>0
 \qquad(\varnothing\ne I\subsetneq\{1,\dots,k\}).
\]
\end{enumerate}
Since $\Phi$ is an embedding, $r_{\{1,\dots,k\}}=D$: this is the number of
nonredundant parameters of a placed copy, with translation, rotation and
dilation counted whenever they are free.  At resolution $\eps$ a
$D$-dimensional parameter set has about $\eps^{-D}$ cells, while hitting
the $k$ target balls costs a factor $\eps^{qk}$; this gives the
\emph{full excess}
\begin{equation}\label{eq:Delta}
 \Delta:=D-k(d-2)=D-qk.
\end{equation}
\end{definition}

The left-hand side of \eqref{eq:flagbox} is the volume of parameters
$\omega'$ whose first selected label lies within $t_1$ of its position at
$\omega$, whose first two selected labels satisfy their constraints at
scale $t_2$, and so on; the nested label sets $I_1\subset\cdots\subset I_k$
form a flag, whence the name \emph{flag estimate}.
Lemma~\ref{lem:measurebox} derives it from the constant-rank hypothesis on
a neighbourhood of the compact chart.

\begin{definition}[rank-regular scale atlas]\label{def:atlas}
A nonempty scale-invariant family $\mathcal F$ of unordered $k$-point
configurations
has a \emph{rank-regular scale atlas} if it is covered, after local
labelling, by countably many $C^1$ embeddings
\[
 \Psi_\nu:U_\nu\subset\R^{D_\nu}\longrightarrow
 \operatorname{Conf}_k(\R^d)
\]
such that every coordinate is a submersion and every submap has constant
rank on a neighbourhood of each compact subchart.  These global charts may
pass through configurations containing the origin; every nonanchored
parameter can be refined to a normalised rank chart.  Dilation is one of the
$D_\nu$ effective parameters, not an external factor, and countability comes
from second countability and a compact exhaustion, not from quotienting out
the scale.  The atlas is \emph{fixed-rank} if all its charts have the same
$D$ and, after the corresponding permutations of local labels, the same
ranks $r_I$; the collection of inequalities over all $I$ is unchanged by
such permutations.
\end{definition}

\subsection{Ranks and resonance}\label{subsec:resonance}

The occurrence argument of Section~\ref{subsec:subcritical} counts the
parameters $\omega$ in an $\eps$-net of $K$ whose $k$ target balls
$B(\phi_i(\omega),\eps)$ are all hit.  The expected count is of order
$\eps^{-\Delta}$, but an expectation alone does not show that the count is
positive with probability bounded below, since it could be large on an
event of small probability.  The Paley--Zygmund inequality \eqref{eq:PZ}
also needs the second moment to be of order $\eps^{-2\Delta}$, that is,
the hitting events of two candidates must be only weakly correlated on
average.  By Lemma~\ref{lem:master}, this correlation is measured by the
product of the factors $g_\eps(\phi_i(\omega),\phi_i(\omega'))$: a label
whose two targets are within $\eps$ of each other costs nothing extra,
while one whose targets are at distance $\rho$ costs $(\eps/\rho)^q$.  The
flag estimate \eqref{eq:flagbox} bounds the number of net points whose
first $j$ selected labels lie within prescribed distances of those of
$\omega$, and positivity of every prefix excess $\Delta_{I_j}$ makes the
resulting sum over scales a convergent geometric series
(Lemma~\ref{lem:resonance}).  This gives a copy with positive probability
(Theorem~\ref{thm:localtrichotomy}); probability one is a separate step
that uses scale invariance (Lemma~\ref{lem:scaleergodic}).

For linear maps $A_i:\R^D\to\R^d$, write $A_I=(A_i)_{i\in I}$.  The
maximal-rank condition
\begin{equation}\label{eq:transversality}
 \operatorname{rank}(A_I)=\min(d\,\# I,D)
 \qquad(\varnothing\ne I\subseteq\{1,\dots,k\})
\end{equation}
says that every set of labels has the largest rank permitted by its domain
and target; it is stronger than what the criterion needs.

\begin{lemma}[flag boxes from constant ranks]\label{lem:measurebox}
Let $U\subset\R^D$ be open, let $K\subset U$ be compact, let
$f_1,\dots,f_k:U\to\R^d$ be $C^1$, and fix a permutation $\pi$.  Suppose
every nested map $F_j=(f_{\pi(1)},\dots,f_{\pi(j)})$ has constant rank
$\rho_j$ on a neighbourhood of $K$.  Then, uniformly in the centres $y_j$
and in $0<t_1\le\cdots\le t_k\le1$,
\[
 \operatorname{Leb}\{x\in K:|f_{\pi(j)}(x)-y_j|\le t_j,
 \ 1\le j\le k\}
 \lesssim\prod_{j=1}^k t_j^{\rho_j-\rho_{j-1}},
 \qquad \rho_0=0,
\]
with a constant depending only on $K$, on the maps $f_i$ on a fixed
neighbourhood of $K$, and on $\pi$.
\end{lemma}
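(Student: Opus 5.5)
We prove the flag estimate by induction on $j$, slicing $K$ along the level sets of the nested maps $F_j$. Because constant rank is a local property and $K$ is compact, it suffices to prove the bound on a small neighbourhood of each point of $K$. We then cover $K$ by finitely many such neighbourhoods and add the bounds; the number of pieces enters only the constant. Relabelling, we may take $\pi=\mathrm{id}$.

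\emph{Step 1: adapted coordinates.} Fix $x_0\in K$. We build local $C^1$ coordinates $u=(u^{(1)},\dots,u^{(k)},v)$ near $x_0$, with $u^{(j)}\in\R^{\rho_j-\rho_{j-1}}$, such that the map $x\mapsto(u^{(1)},\dots,u^{(j)})$ has the same level sets as $F_j$ for every $j$. The construction goes as follows.
\begin{enumerate}
\item Since $F_j$ has constant rank $\rho_j$, the constant-rank theorem makes $F_j$ locally a submersion onto a $\rho_j$-dimensional $C^1$ submanifold, followed by an embedding.
\item The level sets of $F_{j-1}$ are unions of level sets of $F_j$.
\item Inductively, pick $\rho_j-\rho_{j-1}$ scalar components of $f_j$, in local coordinates on its image, whose differentials together with $du^{(1)},\dots,du^{(j-1)}$ span the row space of $DF_j(x_0)$.
\item These rows stay independent near $x_0$, and the rank is constant, so they span the row space of $DF_j$ there.
\item Complete with coordinates $v$ by the inverse function theorem.
\end{enumerate}

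\emph{Step 2: the Lipschitz comparison.} The key quantitative fact is a bound of the form
\[
 |u^{(j)}(x)-u^{(j)}(x')|\lesssim |f_j(x)-f_j(x')|+|F_{j-1}(x)-F_{j-1}(x')|,
\]
valid on a small convex neighbourhood in the new coordinates. The two sides are related as follows. The image of $F_j$ is locally a graph over the coordinates $(u^{(1)},\dots,u^{(j)})$, and $(u^{(1)},\dots,u^{(j)})$ is a $C^1$ function of $F_j$ on that image.

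\emph{Step 3: the volume count.} The flag constraints $|f_i(x)-y_i|\le t_i$ for $i\le j$ force $F_j(x)$ into a set of diameter $\lesssim t_j$ in the $f_j$ block, using $t_1\le\dots\le t_j$. Lower blocks are constrained more tightly. Combined with Step 2, this confines $u^{(j)}$ to a ball of radius $\lesssim t_j$, because all lower blocks lie within $\lesssim t_i\le t_j$. Since the blocks are constrained successively, the admissible set in $u$-coordinates lies in a product of balls of radii $Ct_j$ in $\R^{\rho_j-\rho_{j-1}}$, times a bounded $v$-box. The Jacobian of the coordinate change is bounded near $x_0$, which gives the bound $\prod_j t_j^{\rho_j-\rho_{j-1}}$.

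The main obstacle is Step 2: showing that the coordinate block $u^{(j)}$ is controlled by $f_j$ together with the earlier maps, uniformly in the centres $y_j$. The difficulty is that the image of $F_j$ is only locally a manifold, so two nearby points of $K$ could have images on different local sheets. I would handle this by shrinking the neighbourhoods so that the constant-rank normal form $F_j=\iota\circ p$ holds on each neighbourhood with $\iota$ bi-Lipschitz onto its image. Comparisons are then made only between points within one neighbourhood, and the finite cover absorbs the rest.
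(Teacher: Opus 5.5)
Your argument is correct and is essentially the paper's proof. Both build local coordinates from rows of $Df_{\pi(j)}$ that extend a basis of the flag of row spaces of $DF_j(x_0)$, complete them via the inverse function theorem, then use the change of variables formula and a finite cover of $K$.

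The main obstacle you flag in Step 2 does not arise. Your $u^{(j)}$ consists of components of $f_j$ itself, so $|u^{(j)}(x)-u^{(j)}(x')|\le|f_j(x)-f_j(x')|\le 2t_j$ holds directly on the admissible set. The paper writes this as $|L_jf_{\pi(j)}(x)-L_jy_j|\le\|L_j\|t_j$. Consequently none of the following is needed: the level-set property, the normal form $\iota\circ p$, the local sheets, or the ordering $t_1\le\cdots\le t_k$. Constant rank enters only to make the exponents at every base point equal $\rho_j-\rho_{j-1}$.
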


\begin{proof}
Put $a_j=\rho_j-\rho_{j-1}\ge0$ and fix $x_0\in K$.  The row spaces
$\mathcal R_j:=\operatorname{row}DF_j(x_0)\subseteq(\R^D)^*$,
$\mathcal R_0=\{0\}$, form a flag
$\mathcal R_0\subseteq\mathcal R_1\subseteq\cdots\subseteq\mathcal R_k$
with $\dim\mathcal R_j=\rho_j$, and $\mathcal R_j$ is spanned by
$\mathcal R_{j-1}$ and the rows of $Df_{\pi(j)}(x_0)$.  Select $a_j$
linear combinations of the latter rows whose classes form a basis of
$\mathcal R_j/\mathcal R_{j-1}$; equivalently, choose a linear map
$L_j:\R^d\to\R^{a_j}$ such that the rows of $D(L_jf_{\pi(j)})(x_0)$
complement $\mathcal R_{j-1}$ inside $\mathcal R_j$ (this block is omitted
when $a_j=0$).  All selected rows form a basis of $\mathcal R_k$; complete
it to a basis of $(\R^D)^*$ by the rows of a linear map
$h:\R^D\to\R^{D-\rho_k}$.  Since $\sum_ja_j+(D-\rho_k)=D$, the map
\[
 G=(L_1f_{\pi(1)},\ldots,L_kf_{\pi(k)},h):U\longrightarrow\R^D
\]
has invertible derivative at $x_0$, and the inverse function theorem gives
a precompact neighbourhood $P_{x_0}$ on which $G$ is one-to-one and
$|\det DG|$ is bounded below by a positive constant.  Let
\[
 E:=\{x\in K:|f_{\pi(j)}(x)-y_j|\le t_j,\ 1\le j\le k\}.
\]
For $x\in E\cap P_{x_0}$ we have
$|L_jf_{\pi(j)}(x)-L_jy_j|\le\|L_j\|t_j$, so $G(E\cap P_{x_0})$ is
contained in $\prod_{j}B_{\R^{a_j}}(L_jy_j,\|L_j\|t_j)\times H_{x_0}$,
where $H_{x_0}$ is a fixed bounded set containing the possible
$h$-coordinates.  The change of variables formula gives
$\operatorname{Leb}(E\cap P_{x_0})\le C_{x_0}\prod_jt_j^{a_j}$, with a
constant independent of the centres (translating the balls does not change
their volumes) and of the radii.  Since the ranks are constant on a
neighbourhood of $K$, compactness provides finitely many patches $P_{x_0}$
covering $K$; summing their estimates proves the lemma.  The constant-rank
hypothesis is what produces the nondegenerate coordinate map $G$; a
Lipschitz bound alone would give no lower Jacobian control.  The ordering
of the radii is not used here; it matters in the flag decomposition below.
\end{proof}

\begin{corollary}[rank check]\label{cor:Timpliesreg}
If the full map of a $C^1$ family is an embedding, its individual maps have
rank $d$, and its submaps have constant ranks on a neighbourhood of the
compact chart, then \ref{N2}, including \eqref{eq:flagbox}, holds; if
every proper subfamily has positive excess, \ref{N3} holds as well.  For an
affine family this is a finite matrix-rank computation.  The maximal-rank
condition \eqref{eq:transversality} is a convenient way to compute the
ranks, but the excess inequalities must still be checked.
\end{corollary}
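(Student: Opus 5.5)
The corollary is essentially a bookkeeping statement, so the plan is to check \ref{N2} and \ref{N3} item by item against the hypotheses and invoke Lemma~\ref{lem:measurebox} for the flag estimate.

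\emph{Embedding and submersions.}  The first clause of \ref{N2}, that $\Phi$ is an embedding on a neighbourhood of $K$, is assumed directly.  For the submersion clause, each $\phi_i$ maps into $\R^d$, so rank $d$ at every point means the differential is surjective.  Hence each $\phi_i$ is a submersion on the same neighbourhood.

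\emph{Constant ranks and the flag estimate.}  Constant rank of every submap $\Phi_I$ is also a hypothesis.  For \eqref{eq:flagbox}, fix a permutation $\pi$.  The nested maps $F_j=\Phi_{I_j}$, with $I_j=\{\pi(1),\dots,\pi(j)\}$, are among these submaps, so $F_j$ has constant rank $\rho_j=r_{I_j}$ on a neighbourhood of $K$.  Lemma~\ref{lem:measurebox} applies with centres $y_j=\phi_{\pi(j)}(\omega)$.  Its uniformity in the centres gives uniformity in $\omega\in K$, and it gives the bound $\prod_j t_j^{\,r_{I_j}-r_{I_{j-1}}}$ for $0<t_1\le\cdots\le t_k\le1$.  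There are finitely many permutations, so taking the maximum of the constants gives a bound uniform in $\pi$.  Together these verify \ref{N2}.

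\emph{The clause \ref{N3}.}  The condition $\Delta_I>0$ for every proper nonempty $I$ is exactly what \ref{N3} requires, so this is a restatement.

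\emph{Affine families.}  If $\Phi$ is affine, its differential is a constant matrix, so each $D\Phi_I$ is a fixed submatrix of rows.  Constancy of rank is then automatic, and computing each $r_I$ is a finite linear-algebra check.  When the maximal-rank condition \eqref{eq:transversality} holds, it gives $r_I=\min(d\,\#I,D)$ directly.  The excess inequalities $r_I-q\,\#I>0$ still have to be verified separately.

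\emph{Main obstacle.}  No step is genuinely hard.  The only point needing care is matching the hypotheses exactly: the constant-rank condition must hold on a \emph{neighbourhood} of $K$, not just on $K$, because that is where Lemma~\ref{lem:measurebox} needs it.  One also has to note that the ordering hypothesis on the radii is harmless, since Lemma~\ref{lem:measurebox} does not use it.
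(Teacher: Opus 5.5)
Your proposal is correct and follows the paper's route. The paper gives no separate argument beyond invoking Lemma~\ref{lem:measurebox} for the nested submaps $\Phi_{I_j}$, with centres $\phi_{\pi(j)}(\omega)$ and finitely many permutations. As you say, rank $d$ into $\R^d$ is the submersion clause, and \ref{N3} is the excess hypothesis restated.
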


A set $\Lambda_\eps\subset K$ is a \emph{maximal $\eps$-separated net} if
its points are at mutual distance at least $\eps$ and no point of $K$ can
be added; then every point of $K$ is within $\eps$ of $\Lambda_\eps$, and
$\#\Lambda_\eps\asymp\eps^{-D}$ because $K$ is a $D$-dimensional cube.  Put
\[
 Q_\eps(\omega,\omega'):=
 \prod_{i=1}^k g_\eps\bigl(\phi_i(\omega),\phi_i(\omega')\bigr).
\]

\begin{lemma}[resonance bound]\label{lem:resonance}
Let $(K,\Phi)$ satisfy \ref{N1}--\ref{N3} and suppose $\Delta>0$.  Then,
for all sufficiently small $\eps>0$, uniformly for $\omega\in\Lambda_\eps$,
\begin{equation}\label{eq:resonancesum}
 \sum_{\omega'\in\Lambda_\eps}Q_\eps(\omega,\omega')
 \lesssim\eps^{-\Delta}.
\end{equation}
\end{lemma}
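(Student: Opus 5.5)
Fix $\omega\in\Lambda_\eps$ and split the sum according to the dyadic scale of each label separation. For $\omega'\in\Lambda_\eps$ put $\rho_i:=|\phi_i(\omega')-\phi_i(\omega)|$. By \ref{N1} every $\rho_i\le a/10$, so the $\rho_i$ range over a fixed bounded interval. We bound $g_\eps(\phi_i(\omega),\phi_i(\omega'))\le\min\bigl(1,(\eps/\rho_i)^q\bigr)$. Then we choose a permutation $\pi$ with $\rho_{\pi(1)}\le\cdots\le\rho_{\pi(k)}$; there are at most $k!$ choices, so it suffices to sum over each $\pi$ separately. For a fixed $\pi$, set $t_j:=\max(\eps,\rho_{\pi(j)})$ and round each $t_j$ up to a dyadic value $2^{m_j}\eps$ with $0\le m_1\le\cdots\le m_k\le M:=\log_2(1/\eps)+O(1)$. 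With these choices $Q_\eps(\omega,\omega')\lesssim\prod_j 2^{-qm_j}$.

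The next step counts net points in a single dyadic cell. Fix $\pi$ and $m_1\le\cdots\le m_k$. Any $\omega'$ in this cell satisfies $|\phi_{\pi(j)}(\omega')-\phi_{\pi(j)}(\omega)|\le 2^{m_j}\eps$ for every $j$. The points of $\Lambda_\eps$ are $\eps$-separated, so the balls $B(\omega',\eps/2)$ are disjoint. Since $\phi_i$ is Lipschitz near $K$, each such ball lies in the enlarged flag box with radii $C2^{m_j}\eps$, intersected with a slight neighbourhood of $K$. Lemma~\ref{lem:measurebox}, applied on a compact neighbourhood of $K$ where the ranks are still constant, then bounds the number of net points by
\[
\eps^{-D}\prod_{j=1}^k(2^{m_j}\eps)^{r_{I_j}-r_{I_{j-1}}}
=\prod_{j=1}^k 2^{m_j(r_{I_j}-r_{I_{j-1}})},
\]
using $r_{I_k}=D$. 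Combining this with the weight, the cell contributes at most $\prod_j 2^{m_j(a_j-q)}$, where $a_j:=r_{I_j}-r_{I_{j-1}}$.

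It remains to sum over $0\le m_1\le\cdots\le m_k\le M$, and this is where \ref{N3} is used. Substitute $n_j:=m_j-m_{j-1}\ge0$, with $m_0=0$, so that $m_j=n_1+\cdots+n_j$. Abel summation rewrites the exponent as
\[
\sum_j m_j(a_j-q)=\sum_{l=1}^k n_l\sum_{j\ge l}(a_j-q)
=\sum_{l=1}^k n_l\bigl(\Delta-\Delta_{I_{l-1}}\bigr),
\]
where $\Delta_{I_0}:=0$. For $l=1$ the coefficient is $\Delta>0$. For $l\ge2$ the coefficient is $\Delta-\Delta_{I_{l-1}}$, and \ref{N3} gives $\Delta_{I_{l-1}}>0$, so this coefficient is strictly smaller than $\Delta$. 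We need $\sum_l n_l\le M$, and we maximise the sum subject to that constraint. The dominant term sets $n_1$ near $M$, which contributes $2^{\Delta M}\asymp\eps^{-\Delta}$. Every other direction loses a factor $2^{-\Delta_{I_{l-1}}n_l}$ relative to putting that mass into $n_1$.

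Concretely, the sum is bounded by
\[
\sum_{n_2,\dots,n_k\ge0}2^{\Delta M}\prod_{l\ge2}2^{-\Delta_{I_{l-1}}n_l}
\cdot\sum_{n_1\le M}2^{\Delta(n_1-M)},
\]
a product of convergent geometric series, which is $\lesssim\eps^{-\Delta}$ uniformly in $\omega$. The main thing to watch is that every correction coefficient really is the strictly positive prefix excess $\Delta_{I_{l-1}}$. That holds because each $I_{l-1}$ with $2\le l\le k$ is a nonempty proper subset, and this is exactly where hypothesis \ref{N3} is needed. The secondary technical point is the net-counting step, which requires Lemma~\ref{lem:measurebox} on a slightly larger compact set than $K$.
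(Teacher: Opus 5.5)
Your proof is correct and follows essentially the same route as the paper: a dyadic decomposition of the same-label distances, labels ordered from closest to farthest, net points counted through the flag estimate of Lemma~\ref{lem:measurebox} on a slightly larger compact set, Abel summation, and a product of geometric series whose ratios are the prefix excesses. The only difference is in the indexing of the Abel summation. You sum over increments starting from the closest label, keep the cap $\sum_l n_l\le M$, and extract $\eps^{-\Delta}$ from the $n_1$-direction. The paper instead uses gaps $g_j$ measured from the top, pulls out $\eps^{qk-D}$ first, and drops the cap, so the full excess $\Delta$ appears as the coefficient of the last gap rather than the first increment.
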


\begin{proof}
Choose a compact neighbourhood $K^+\subset U$ of $K$ on which all submap
ranks remain constant; for small $\eps$ every $C\eps$-thickening used
below lies in $K^+$, and Lemma~\ref{lem:measurebox} applied to $K^+$ gives
the flag estimate for the thickened cells.  An $\eps$-separated net has at
most $C\eps^{-D}\operatorname{Leb}(A^{C\eps})$ points in a set $A$, since
the balls of radius $\eps/3$ about those points are disjoint and lie in
$A^{\eps/3}$; thickening a dyadic cell enlarges each flag tolerance by only
$O(\eps)$, which is absorbed at the lowest dyadic scale.

Decompose dyadically the $k$ quantities
$|\phi_i(\omega')-\phi_i(\omega)|+\eps$, and choose $L$ with
$2^{-L}\asymp\eps$.  On each dyadic cell, order the labels by a permutation
$\pi$ so that
\[
 |\phi_{\pi(j)}(\omega')-\phi_{\pi(j)}(\omega)|+\eps
 \asymp 2^{-n_j},
 \qquad L\ge n_1\ge\cdots\ge n_k\ge0 ;
\]
for small $\eps$, \ref{N1} keeps these same-label distances below $1$, so
the radii lie in the range of \eqref{eq:flagbox}.  Put
$I_j=\{\pi(1),\ldots,\pi(j)\}$, $a_j=r_{I_j}-r_{I_{j-1}}$, $n_{k+1}=0$
and $g_j=n_j-n_{j+1}\ge0$.  The net count contributes $\eps^{-D}$, the
kernels contribute $\eps^{qk}2^{q\sum_j n_j}$, and \eqref{eq:flagbox}
contributes $2^{-\sum_j n_ja_j}$.  Abel summation gives
\[
 \sum_{j=1}^k n_j(a_j-q)
 =\sum_{j=1}^k g_j\sum_{h=1}^j(a_h-q)
 =\sum_{j=1}^k g_j\Delta_{I_j},
\]
so, for a fixed permutation, the sum over dyadic cells is at most
\[
 \eps^{qk-D}
 \sum_{g_1,\dots,g_k\ge0}
 2^{-\sum_{j=1}^k g_j\Delta_{I_j}},
\]
where extending the gaps to all nonnegative integers only enlarges the sum.
Every exponent is positive, by \ref{N3} for a proper prefix and by
$\Delta>0$ for the last one, so the product of geometric series is bounded,
and $\eps^{qk-D}=\eps^{-\Delta}$.  Summing over the finitely many
permutations proves \eqref{eq:resonancesum}.
\end{proof}

\subsection{Positive excess: the second-moment argument}
\label{subsec:subcritical}

For $\omega\in\Lambda_\eps$, let
\[
 H_\omega:=\bigcap_{i=1}^k
 \{B\cap B(\phi_i(\omega),\eps)\ne\varnothing\},
 \qquad
 X_\eps:=\sum_{\omega\in\Lambda_\eps}\mathbf1_{H_\omega}.
\]
Lemma~\ref{lem:master} and \ref{N1} give
\begin{equation}\label{eq:PHom}
 \mathbb P(H_\omega)\asymp\eps^{qk}
\end{equation}
and, for two parameters in the same labelled chart,
\begin{equation}\label{eq:pairom}
 \mathbb P(H_\omega\cap H_{\omega'})
 \lesssim \mathbb P(H_\omega)Q_\eps(\omega,\omega').
\end{equation}
Since $\#\Lambda_\eps\asymp\eps^{-D}$ and
$\mathbb EX_\eps=\sum_{\omega}\mathbb P(H_\omega)$,
\begin{equation}\label{eq:firstmomsub}
 \mathbb E(X_\eps)\asymp\eps^{-\Delta},
\end{equation}
and if $\Delta>0$, Lemma~\ref{lem:resonance} gives
\begin{equation}\label{eq:secondmomsub}
 \begin{split}
 \mathbb E(X_\eps^2)
 &=\sum_{\omega,\omega'\in\Lambda_\eps}
   \mathbb P(H_\omega\cap H_{\omega'})
 \lesssim\sum_{\omega\in\Lambda_\eps}\mathbb P(H_\omega)
   \sum_{\omega'\in\Lambda_\eps}Q_\eps(\omega,\omega')\\
 &\lesssim \mathbb E(X_\eps)\eps^{-\Delta}
 \asymp \mathbb E(X_\eps)^2.
 \end{split}
\end{equation}

For a compact parameter cube $K$, define the decreasing events
\[
 E_r(K):=\left\{\exists\omega\in K:
 \operatorname{dist}(\phi_i(\omega),B)\le r
 \text{ for every }i\right\}.
\]
They are measurable: on the probability-one event on which $B$ is closed,
transience and compactness of $\Phi(K)$ give
\[
 E_r(K)=\bigcup_{T\in\N}
 \left\{
 \min_{\substack{\omega\in K\\t_1,\ldots,t_k\in[0,T]}}
 \max_{1\le i\le k}|W(t_i)-\phi_i(\omega)|\le r
 \right\},
\]
a countable union of Borel events (the minimum is over a compact set), and
completion takes care of the null event.  Moreover, almost surely,
\begin{equation}\label{eq:nestedexact}
 \bigcap_{n\ge1}E_{2^{-n}}(K)
 =\{B\text{ contains }\Phi(\omega)\text{ for some }\omega\in K\}:
\end{equation}
if $\omega_n\in K$ witnesses $E_{2^{-n}}(K)$, a subsequence converges to
some $\omega\in K$, and points $b_{i,n}\in B$ within $2^{-n}$ of
$\phi_i(\omega_n)$ converge along it to $\phi_i(\omega)$, which lies in $B$
because $B$ is closed; the reverse inclusion is immediate.

\begin{theorem}[local occurrence]\label{thm:localtrichotomy}
If a normalised positive-excess chart satisfies $\Delta>0$, then the trace
contains a copy with parameter in $K$ with probability bounded below by a
positive constant.
\end{theorem}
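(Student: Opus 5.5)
The plan is to combine the first and second moment bounds already established with the Paley--Zygmund inequality \eqref{eq:PZ}, and then to pass from approximate copies at resolution $\eps$ to an exact copy using the nested-event identity \eqref{eq:nestedexact}.

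First, for each small $\eps$ apply \eqref{eq:PZ} to $Z=X_\eps$. By \eqref{eq:firstmomsub}, $\mathbb E X_\eps\asymp\eps^{-\Delta}>0$. By \eqref{eq:secondmomsub}, which uses Lemma~\ref{lem:resonance} and hence $\Delta>0$ together with \ref{N3}, $\mathbb E X_\eps^2\lesssim(\mathbb E X_\eps)^2$. The second moment is finite because $X_\eps\le\#\Lambda_\eps$. Hence
\[
 \mathbb P(X_\eps>0)\ge c>0
\]
with $c$ independent of $\eps$. The constants in \eqref{eq:PHom} and \eqref{eq:pairom} come from Lemma~\ref{lem:master} applied with $V_i$ the $\eps$-independent bounded neighbourhoods of $\phi_i(K)$. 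Condition \ref{N1} guarantees the separation hypotheses of that lemma once $\eps<a/10$.

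Second, observe that $\{X_\eps>0\}\subseteq E_\eps(K)$. Indeed, if $H_\omega$ occurs for some $\omega\in\Lambda_\eps\subset K$, then each ball $B(\phi_i(\omega),\eps)$ meets $B$, so $\operatorname{dist}(\phi_i(\omega),B)<\eps$ for every $i$. Therefore $\mathbb P(E_{2^{-n}}(K))\ge c$ for all large $n$. The events $E_{2^{-n}}(K)$ decrease in $n$, so continuity from above gives
\[
 \mathbb P\Bigl(\bigcap_n E_{2^{-n}}(K)\Bigr)\ge c.
\]
By \eqref{eq:nestedexact}, this intersection agrees almost surely with the event that $B$ contains $\Phi(\omega)$ for some $\omega\in K$. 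This is the claim.

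The only delicate point is the uniformity of the constants in $\eps$, notably in the second moment. This is exactly what Lemma~\ref{lem:resonance} supplies, for $\eps$ below a chart-dependent threshold. I would also check that the pair bound \eqref{eq:pairom} is used for all pairs, including the diagonal $\omega=\omega'$, where $Q_\eps\asymp 1$. The diagonal is covered, since \eqref{eq:pairedhit} holds for arbitrary $y_i\in V_i$.
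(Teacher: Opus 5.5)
Your proposal is correct and follows essentially the same route as the paper: Paley--Zygmund applied with \eqref{eq:firstmomsub} and \eqref{eq:secondmomsub} gives a uniform lower bound on $\mathbb P(X_\eps>0)$, then the inclusion $\{X_\eps>0\}\subseteq E_\eps(K)$, continuity from above, and \eqref{eq:nestedexact} finish the proof. Your additional checks (finiteness of the second moment, the diagonal pairs with $Q_\eps=1$, and the choice of the sets $V_i$ in Lemma~\ref{lem:master}) are details the paper leaves implicit.
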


\begin{proof}
By \eqref{eq:PZ}, \eqref{eq:firstmomsub} and \eqref{eq:secondmomsub},
$\mathbb P(X_\eps>0)\gtrsim1$ uniformly in $\eps$.  Since
$\{X_\eps>0\}\subseteq E_\eps(K)$, all the events $E_{2^{-n}}(K)$ have
probability bounded below by the same positive constant, and continuity of
probability from above gives
$\mathbb P\bigl(\bigcap_{n}E_{2^{-n}}(K)\bigr)
=\lim_{n}\mathbb P(E_{2^{-n}}(K))>0$.  Now apply \eqref{eq:nestedexact}.
The nets $\Lambda_\eps$ need not be nested; they only provide the uniform
lower bound.
\end{proof}

The first moment alone gives a one-sided dimension bound; it does not use
the resonance estimate and is not an exact dimension statement.  For a
nonempty bounded set $A$ let $N(A,r)$ be the least number of balls of
radius $r$ needed to cover $A$, and let
\[
 \overline{\dim}_M A
 :=\limsup_{r\downarrow0}\frac{\log N(A,r)}{-\log r}
\]
be its upper Minkowski dimension, with $\overline{\dim}_M\varnothing=0$.

\begin{proposition}[upper dimension of realised parameters]
\label{prop:realisedupper}
For a normalised rank chart with $\Delta\ge0$, let
\[
 \mathcal R_K:=\{\omega\in K:\phi_i(\omega)\in B
                       \text{ for every }1\le i\le k\}.
\]
Almost surely, $\dim_H\mathcal R_K\le\overline{\dim}_{M}\mathcal R_K\le\Delta$.
\end{proposition}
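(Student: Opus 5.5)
The plan is a first-moment covering argument at dyadic scales, followed by Borel--Cantelli. The first inequality $\dim_H\le\overline{\dim}_M$ holds for every bounded set, so only the Minkowski bound needs proof. For $\eps=2^{-n}$ let $\Lambda_\eps$ be a maximal $\eps$-separated net of $K$. Let $L$ be a Lipschitz constant of $\Phi$ on a neighbourhood of $K$, and set $\eps'=(L+1)\eps$. If $\omega\in\mathcal R_K$ and $\omega_0\in\Lambda_\eps$ satisfies $|\omega-\omega_0|\le\eps$, then each $\phi_i(\omega)\in B$ lies within $L\eps<\eps'$ of $\phi_i(\omega_0)$. Hence every ball $B(\phi_i(\omega_0),\eps')$ is hit. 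Consequently
\[
 N(\mathcal R_K,\eps)\le Y_\eps:=\#\{\omega_0\in\Lambda_\eps:\ B\cap B(\phi_i(\omega_0),\eps')\ne\varnothing\ \text{for all } i\}.
\]

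Next I bound the expectation of $Y_\eps$. Condition \ref{N1} gives the separation hypotheses of Lemma~\ref{lem:master} with the sets $V_i=\phi_i(K)$. The radius $\eps'$ is a fixed multiple of $\eps$, and the upper bound in \eqref{eq:onecopyhit} only rescales constants. So for small $\eps$,
\[
 \mathbb P\bigl(B\cap B(\phi_i(\omega_0),\eps')\ne\varnothing\ \forall i\bigr)\lesssim\eps^{qk}.
\]
Combining this with $\#\Lambda_\eps\asymp\eps^{-D}$ gives $\mathbb E\,Y_\eps\lesssim\eps^{qk-D}=\eps^{-\Delta}$. Note that no resonance or second-moment estimate is needed here, and \ref{N3} is not used.

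Finally I convert the expectation bound into an almost sure bound. Fix $\eta>0$. Markov's inequality gives
\[
 \mathbb P\bigl(Y_{2^{-n}}>2^{n(\Delta+\eta)}\bigr)\lesssim 2^{-n\eta}.
\]
This is summable in $n$, so by Borel--Cantelli, almost surely $N(\mathcal R_K,2^{-n})\le 2^{n(\Delta+\eta)}$ for all large $n$. Passing from dyadic radii to all $r$ changes $N$ only by a bounded factor, so $\overline{\dim}_M\mathcal R_K\le\Delta+\eta$. Intersecting over rational $\eta>0$ finishes the proof. If $\mathcal R_K$ is empty, the convention $\overline{\dim}_M\varnothing=0\le\Delta$ handles it.

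The main point to check is the use of Lemma~\ref{lem:master} with the enlarged radius $\eps'$. That lemma requires $\eps<a/10$, so I apply it with $\eps'$ in place of $\eps$, which is allowed once $\eps$ is small. The factor $(\eps')^{qk}\asymp\eps^{qk}$ then costs only a constant. The hypothesis $\Delta\ge0$ plays no role in the argument; it only makes the conclusion meaningful.
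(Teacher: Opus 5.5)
Your proposal is correct and follows the paper's proof essentially step for step: a maximal $\eps$-net, the Lipschitz enlargement of the target radius to $(L+1)\eps$, the upper bound in \eqref{eq:onecopyhit} giving $\mathbb E\,Y_\eps\lesssim\eps^{-\Delta}$, then Markov's inequality and Borel--Cantelli at dyadic scales and an intersection over rational $\eta>0$. One small correction to your closing remark: $\Delta\ge0$ is actually used, at the point where you invoke $\overline{\dim}_M\varnothing=0\le\Delta$; for $\Delta<0$ the same argument shows $\mathcal R_K=\varnothing$ almost surely, and the stated inequality would then be false under that convention.
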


\begin{proof}
Let $\Lambda_\eps$ be a maximal $\eps$-separated net of $K$, let $C$ be a
common Lipschitz constant of the $\phi_i$ on $K$, and put $C_0=C+1$.
Almost surely $\mathcal R_K$ is compact, because $K$ and $B$ are closed.
If $\omega\in\mathcal R_K$ and $\omega'\in\Lambda_\eps$ is within $\eps$ of
$\omega$, then $B$ meets every ball $B(\phi_i(\omega'),C_0\eps)$.  Let
$Y_\eps$ be the number of net points with this property; the corresponding
balls cover $\mathcal R_K$, and \eqref{eq:onecopyhit} with $C_0\eps$ in
place of $\eps$ gives
$\mathbb EY_\eps\lesssim\eps^{-D}\eps^{qk}=\eps^{-\Delta}$.  For
$\eta>0$, Markov's inequality gives
$\mathbb P\{Y_{2^{-n}}>2^{n(\Delta+\eta)}\}\lesssim2^{-n\eta}$, which is
summable, so by Borel--Cantelli, almost surely $\mathcal R_K$ is covered by
at most $2^{n(\Delta+\eta)}$ parameter balls of radius $O(2^{-n})$ for all
large $n$.  Intersecting over rational $\eta>0$ proves the Minkowski
bound, and the Hausdorff bound follows.
\end{proof}

The upgrade from positive probability to probability one uses the
ergodicity of Brownian scaling, in the standard
Lamperti--Ornstein--Uhlenbeck formulation; compare \cite[p.~3]{CKM}.  We
include the short proof because we use the precise action on one-sided
Wiener space.  A probability-preserving map $T$ is \emph{ergodic} if every
event with $T^{-1}A=A$ has probability zero or one, and \emph{mixing} if
$\mathbb P(A\cap T^{-n}C)\to\mathbb P(A)\mathbb P(C)$ for all events
$A,C$; mixing implies ergodicity \cite[Chapter~1]{Walters}.

\begin{lemma}[ergodicity of Brownian scaling]\label{lem:scaleergodic}
For every $\lambda>1$, the map
\[
 (S_\lambda W)(t):=\lambda^{-1}W(\lambda^2t)
\]
is measure preserving and ergodic on one-sided Wiener space.  Hence every
event determined by the trace through a property invariant under spatial
dilation has probability $0$ or $1$.
\end{lemma}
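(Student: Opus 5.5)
We plan to pass to the Lamperti transformation. Measure preservation is immediate from Brownian scaling: $S_\lambda W$ is again a Brownian motion started at the origin, because $\lambda^{-1}W(\lambda^2\cdot)$ has continuous paths, independent Gaussian increments and covariance $\lambda^{-2}\cdot\lambda^2(t-s)I_d$. For ergodicity we will prove the stronger mixing property, $\mathbb P(A\cap S_\lambda^{-n}C)\to\mathbb P(A)\mathbb P(C)$.

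\emph{Reduction to cylinder events.} Since $S_\lambda$ preserves $\mathbb P$, the quantity $|\mathbb P(A\cap S_\lambda^{-n}C)-\mathbb P(A)\mathbb P(C)|$ changes by at most $2\mathbb P(A\triangle A')+2\mathbb P(C\triangle C')$ when $A,C$ are replaced by approximants $A',C'$. Every event in $\mathcal F$ can be approximated in $\mathbb P$-measure by events depending on $W$ restricted to a compact time interval $[\delta,T]$ with $0<\delta<T$. Indeed, $\mathcal F$ is generated by finite-dimensional cylinders, and $W(0)=0$ is deterministic. It therefore suffices to take
\[
 A\in\sigma\bigl(W(t):t\le T\bigr),\qquad C\in\sigma\bigl(W(t):\delta\le t\le T\bigr).
\]

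\emph{Asymptotic independence.} The event $S_\lambda^{-n}C$ is determined by the rescaled path $\lambda^{-n}W(\lambda^{2n}t)$, $t\in[\delta,T]$. This path equals $\lambda^{-n}\bigl(W(\lambda^{2n}t)-W(T)\bigr)+\lambda^{-n}W(T)$. Once $\lambda^{2n}\delta>T$, the first term is independent of $\sigma(W(s):s\le T)$ by the Markov property, and it has the law of $(\lambda^{-n}\widetilde W(\lambda^{2n}t-T))_{t\in[\delta,T]}$. That process converges in total variation to $(\widetilde W(t))_{t\in[\delta,T]}$, since the time shift $\lambda^{-2n}T\to0$ and the perturbation $\lambda^{-n}W(T)\to0$.

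Making this rigorous is the main obstacle, because total variation can fail under small perturbations of a path. We would handle it by restricting to cylinder events $C=\{(W(t_1),\dots,W(t_m))\in G\}$ with $\delta\le t_1<\dots<t_m$. For these, the relevant densities are Gaussian, and their joint law conditional on $\mathcal F_T$ is an explicit Gaussian with mean $\lambda^{-n}W(T)$ and covariance $\min(t_i,t_j)-\lambda^{-2n}T$. As $n\to\infty$ this converges in total variation, almost surely, to the unconditional law. Dominated convergence then gives $\mathbb P(A\cap S_\lambda^{-n}C)\to\mathbb P(A)\mathbb P(C)$. Approximation by cylinder events extends the result to all events, and mixing implies ergodicity \cite[Chapter~1]{Walters}.

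\emph{The final assertion.} Suppose $A$ is defined by a dilation-invariant property of the trace. The trace of $S_\lambda W$ is $\lambda^{-1}B$, so $A$ satisfies $S_\lambda^{-1}A=A$, and ergodicity forces $\mathbb P(A)\in\{0,1\}$. In the applications, Theorem~\ref{thm:localtrichotomy} will show that the scale-invariant event ``$B$ contains a member of $\mathcal F$'' has positive probability, and this lemma then gives probability one.
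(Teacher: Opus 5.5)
Your argument is correct, but it is not the paper's route. Despite your opening sentence, you never actually use the Lamperti transformation.

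\textbf{The paper's route.} It conjugates $S_\lambda$, via $Y(s)=e^{-s/2}W(e^s)$, to translation by $2\log\lambda$ for a stationary Gaussian process with covariance $e^{-|t-s|/2}I_d$. It proves mixing for pairs of cylinder events by letting the cross-covariances decay, so that the joint Gaussian densities converge in $L^1$ to the product density. Density of the cylinder algebra then extends the identity to all events.

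\textbf{Your route.} You stay in the original time variable and use the Markov property at the fixed time $T$. Once $\lambda^{2n}\delta>T$, you compute the conditional law of the rescaled future given $\mathcal F_T$ exactly: a Gaussian with mean $\lambda^{-n}W(T)$ and covariance $\min(t_i,t_j)-\lambda^{-2n}T$. This converges almost surely in total variation to the unconditional law, which is nondegenerate because $t_1\ge\delta>0$. Dominated convergence then finishes the proof.

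\textbf{Comparison.} Your version is more elementary. It avoids verifying that the Lamperti map is a measurable shift-equivariant isomorphism, and only $C$ needs to be approximated, while $A$ may be an arbitrary $\mathcal F_T$-event. The paper's version gives the structural picture that scaling is a shift of a stationary Gaussian process, and it treats both events symmetrically.

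\textbf{A side remark.} The obstacle you flag does not arise here. Conditionally on $\mathcal F_T$, the rescaled path on $[\delta,T]$ is its value at time $\delta$, distributed as $N(\lambda^{-n}W(T),(\delta-\lambda^{-2n}T)I_d)$, plus increments that are independent of that value and have exactly the law of $(W(t)-W(\delta))_{t\in[\delta,T]}$. Its total-variation distance from the law of $W|_{[\delta,T]}$ is therefore bounded by that between two Gaussians on $\R^d$. So your restriction to finite-dimensional cylinders is harmless but unnecessary.
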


\begin{proof}
$S_\lambda$ is measure preserving by Brownian scaling \cite[Lem.~1.7]{PM}.
The Lamperti transform $Y(s)=e^{-s/2}W(e^s)$, $s\in\R$ (see also
\cite[Rem.~1.10]{PM}), is a measurable shift-equivariant isomorphism onto
its image, with inverse $W(t)=t^{1/2}Y(\log t)$ for $t>0$ and $W(0)=0$; it
conjugates $S_\lambda$ to translation by $2\log\lambda$.  The process $Y$
is stationary Gaussian with covariance
$\mathbb E(Y(s)Y(t)^*)=e^{-|t-s|/2}I_d$.  For two finite vectors of sampled
values of $Y$ whose sampling times are separated by $h$, every
cross-covariance tends to zero as $h\to\infty$; after repeated time
coordinates are removed, the covariance matrices converge to a
block-diagonal positive-definite matrix, and the Gaussian densities converge
in $L^1$ to the product density.  Thus cylinder events are mixing.  The
cylinder algebra generates the Borel sigma-field of path space and is dense
for the metric $d(A,C)=\mathbb P(A\mathbin{\triangle}C)$; since
translation is measure preserving, approximating either event changes the
mixing identity uniformly in the shift, so the identity extends to all
measurable events.  Hence the shift, and with it $S_\lambda$, is mixing and
therefore ergodic.  Finally, the trace of $S_\lambda W$ is
$\lambda^{-1}B$, so a dilation-invariant property of the trace defines an
$S_\lambda$-invariant event.
\end{proof}

Only one dilation is used, so the probability-one upgrade remains valid for
families invariant merely under the discrete group $\{\lambda^n:n\in\Z\}$
for some fixed $\lambda>1$; full scale invariance is assumed in the main
statement because it is natural for the families considered here.

\subsection{Product capacity: negative and critical excess}
\label{subsec:nonpositive}

Nonoccurrence, including the critical case, is handled by classical
multiparameter potential theory: a product capacity detects whether $s$
independent Brownian paths can realise a point of a configuration set,
each path at its own time, and its energy test remains decisive at the
critical dimension.

Let $\zeta$ be an exponential time of mean one, independent of the path,
and let
\[
 u_1(z):=\int_0^\infty e^{-t}(2\pi t)^{-d/2}
             e^{-|z|^2/(2t)}\,dt
\]
be the $1$-resolvent density of Brownian motion: for every nonnegative Borel
function $f$, Tonelli's theorem and the Gaussian transition density give
\[
 \mathbb E_x\int_0^\zeta f(W(t))\,dt
 =\int_{\R^d}u_1(y-x)f(y)\,dy ,
\]
so $u_1$ is the Green density of Brownian motion killed at time $\zeta$.
Near the origin $u_1(z)$ is comparable to $|z|^{2-d}$, and only this local
comparison is used.  For $x,y\in(\R^d)^s$ set
\[
 \mathcal K_s(x,y):=\prod_{i=1}^s u_1(x_i-y_i),
 \qquad
 \operatorname{Cap}_s(E):=
 \left[
 \inf_{\mu\in\mathcal P(E)}
 \iint\mathcal K_s(x,y)\,d\mu(x)d\mu(y)
 \right]^{-1},
\]
with $\infty^{-1}=0$, where $\mathcal P(E)$ is the set of Borel probability
measures supported on $E$.  Thus $\operatorname{Cap}_s(E)=0$ means that
every probability measure on $E$ has infinite $\mathcal K_s$-energy.

\begin{lemma}[product Brownian capacity]\label{lem:productcapacity}
Let $E\subset(\R^d)^s$ be Borel.  Let the initial vector
$(W^1(0),\dots,W^s(0))$ have a law absolutely continuous with respect to
Lebesgue measure and, conditionally on it, let
$W^i(t)=W^i(0)+\widetilde W^i(t)$, where $\widetilde W^1,\dots,\widetilde W^s$
are independent standard Brownian motions, independent of the initial
vector.  If $\operatorname{Cap}_s(E)=0$, then
\[
 \mathbb P\{(W^1(t_1),\dots,W^s(t_s))\in E
 \text{ for some }t_1,\dots,t_s>0\}=0.
\]
\end{lemma}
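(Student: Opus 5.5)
We prove the contrapositive by a first-moment argument on the killed occupation measure. Suppose the event
\[
 A:=\{(W^1(t_1),\dots,W^s(t_s))\in E\text{ for some }t_1,\dots,t_s>0\}
\]
has positive probability. We will then construct a probability measure on $E$ with finite $\mathcal K_s$-energy.

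\emph{Reductions.} By countable additivity it suffices to treat the event with times $t_i\in(\delta,\infty)$ for a fixed $\delta>0$. By the Markov property at time $\delta$, the positions $W^i(\delta)$ have a joint law with a bounded density (the convolution of the initial law with Gaussian kernels), so we may take the initial vector to have a bounded density. Let $\zeta_1,\dots,\zeta_s$ be independent mean-one exponential killing times. On $A$ there is positive probability that the realising times satisfy $t_i<\zeta_i$; indeed, the event that $t_i<\zeta_i$ for all $i$ has positive conditional probability given the paths. We may also restrict $E$ to a compact $E'\subset E$ with positive hitting probability by capacitability: the hitting probability of the analytic set $E$ is the supremum over compact subsets, via a Choquet-capacity argument.

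\emph{Construction of the measure.} For $\eta>0$, consider the occupation measure of the $\eta$-neighbourhood $E'_\eta$:
\[
 Z_\eta=\int_0^{\zeta_1}\!\!\cdots\int_0^{\zeta_s}\mathbf 1_{E'_\eta}\bigl(W^1(t_1),\dots,W^s(t_s)\bigr)\,dt .
\]
The standard approach is to find $\mu_\eta$, the normalised expectation of the occupation measure of $E'_\eta$ conditioned on hitting. This requires a lower bound $\mathbb E Z_\eta\gtrsim \mathbb P(\text{hit})\cdot\inf(\ldots)$, which is delicate. Instead we use the classical energy-via-hitting-distribution method (Khoshnevisan's multiparameter theory, Fitzsimmons--Salisbury). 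First, define the stopping point as the lexicographically first realising time, using the multiparameter strong Markov property on the product of the $s$ killed processes. Then let $\mu$ be the law of the hitting point conditioned on hitting; $\mu$ is supported on $E'$. Next, compute
\[
 \mathbb E\bigl[\mu\text{-occupation}\bigr]\ \text{through the Green kernel }\mathcal K_s ,
\]
using the fact that each coordinate process killed at $\zeta_i$ has Green density $u_1$. This shows that $\int\mathcal K_s(x,y)\,d\mu(y)$ is bounded for $\mu$-almost every $x$, up to the constant $1/\mathbb P(A)$ and the bound on the initial density. In the one-parameter case this is the identity that the hitting distribution is the equilibrium measure, with potential at most $1$. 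In the multiparameter case the first hitting point is not a stopping point for the partial order, so the standard replacement is Khoshnevisan's Cairoli-maximal-inequality argument. One takes the $s$-parameter martingale $M_t=\mathbb E[\int\mathcal K_s(\cdot)\,\ldots\mid\mathcal F_t]$ and applies Cairoli's $L^2$ inequality, which gives
\[
 \mathbb P(A)\lesssim \operatorname{Cap}_s(E'),
\]
with constants depending on $2^s$ and the density bound.

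The main obstacle is exactly this multiparameter step, since the product filtration is commuting (it satisfies Cairoli--Walsh condition F4) but has no natural first hitting point. I would try the Khoshnevisan route: for a measure $\mu$ on $E'_\eta$, bound the second moment of $Z_\eta$ from above by $\mathcal K_s$-energy via the independence of the processes and the resolvent identity, apply Cairoli's maximal inequality to $M_t$, and use the fact that on $A$, $\sup_t M_t$ is at least a constant times the conditional hitting mass. Letting $\eta\to0$ and using weak compactness and lower semicontinuity of the energy of the positive kernel $\mathcal K_s$ produces a measure on $E'$ of finite energy. This contradicts $\operatorname{Cap}_s(E)=0$.
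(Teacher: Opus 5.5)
Your reductions are fine: restarting at time $\delta$ gives a bounded initial density, and killing costs only a factor $e^{-sT}$. The range over a compact time box is a random compact set, so its hitting functional is a Choquet capacity and you may pass to a compact $E'$. Re-deriving the Fitzsimmons--Salisbury bound through Cairoli's maximal inequality is also a legitimate alternative to quoting it. The gap is that the step producing a finite-energy measure on $E'$ is only asserted, and the version you actually write down does not work.

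Your first construction fails as stated. The lexicographically first realising time is not a stopping point for the product filtration, so there is no strong Markov property there. The one-parameter fact that the hitting law has bounded potential has no multiparameter analogue, so nothing bounds $\int\mathcal K_s(x,y)\,d\mu(y)$ pointwise.

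In the Cairoli paragraph, the only martingale you define comes from the unweighted occupation time $Z_\eta$ of $E'_\eta$. Write $X(t)=(W^1(t_1),\dots,W^s(t_s))$, let $\lambda_\eta$ be normalised Lebesgue measure on $E'_\eta$, put $\mathcal U g(x)=\int\mathcal K_s(x,y)g(y)\,dy$ and $\mathcal E(\nu)=\iint\mathcal K_s\,d\nu\,d\nu$. Then $M_t\ge\mathbf 1_{\{t\prec\zeta\}}\,\mathcal U\lambda_\eta(X(t))$, so on $A$ you only get $\sup_tM_t\ge\inf_{E'}\mathcal U\lambda_\eta$. Cairoli then yields only $\mathbb P(A)\lesssim\mathcal E(\lambda_\eta)/(\inf_{E'}\mathcal U\lambda_\eta)^2$. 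This is not an energy bound for any measure, and it need not tend to zero when $\operatorname{Cap}_s(E')=0$. For example, take a critical $qs$-dimensional patch together with an isolated point whose coordinate projections avoid those of the patch. Then $\mathcal E(\lambda_\eta)\to\infty$, while $\mathcal U\lambda_\eta$ stays bounded at the point. Weighting by an unspecified measure on $E'_\eta$ has the same defect. The sentence ``on $A$, $\sup_tM_t$ is at least a constant times the conditional hitting mass'' is precisely the missing inequality, not an available fact.

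The hitting law is the right object, but only as a weight:
\begin{itemize}
\item Let $\tau$ be the lexicographically least realising time in a compact time box; it is attained because $E'$ is compact and paths are continuous.
\item Let $\mu$ be the conditional law of $X(\tau)$ given $A$, and let $f_\eta$ be the density of $\mu*\phi_\eta$, where $\phi_\eta$ is a product of normalised ball indicators.
\item Put $Z=\int_{\{t\prec\zeta\}}f_\eta(X(t))\,dt$. The coordinatewise Markov property and memorylessness of the killing give $M_t\ge\mathbf 1_{\{t\prec\zeta\}}\,\mathcal Uf_\eta(X(t))$ simultaneously for all rational $t$.
\item Since $\mathcal Uf_\eta$ is continuous, $\sup_{t\in\mathbb Q^s}M_t\ge\mathcal Uf_\eta(X(\tau))$ on $A$. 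Cauchy--Schwarz, Cairoli and the $2^s$-orthant second-moment bound then give $\mathbb P(A)\int\mathcal Uf_\eta\,d\mu\lesssim\bigl(\mathbb P(A)\,\mathcal E(\mu*\phi_\eta)\bigr)^{1/2}$.
\item You still need the kernel comparison $\mathcal E(\mu*\phi_\eta)\lesssim\int\mathcal Uf_\eta\,d\mu$. It reduces to $u_1*\phi*\phi\lesssim u_1*\phi$ on bounded sets, with $\phi$ the $\R^d$ factor of $\phi_\eta$, and holds because $u_1$ is radially decreasing with $u_1(z)\asymp|z|^{2-d}$ there.
\item Only then is $\mathcal E(\mu*\phi_\eta)\lesssim\mathbb P(A)^{-1}$ uniformly in $\eta$. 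Lower semicontinuity gives $\mathcal E(\mu)<\infty$, contradicting $\operatorname{Cap}_s(E)=0$.
\end{itemize}

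The paper needs none of this. It quotes the Fitzsimmons--Salisbury inequality \eqref{eq:FShitting} for initial measures with $\nu_iU^i\le\operatorname{Leb}$. It notes that Lebesgue measure on half-open unit cubes is admissible because $\int u_1=1$, and concludes $h=0$ almost everywhere. It then integrates against the initial density and removes the killing with the factor $e^{-sT}$.
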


\begin{proof}
Kill the $s$ Brownian motions at independent exponential times of mean one,
and let $h(x_1,\ldots,x_s)$ be the probability, starting from
$(x_1,\ldots,x_s)$, that the killed paths satisfy
$(W^1(t_1),\ldots,W^s(t_s))\in E$ for some $t_1,\ldots,t_s>0$.
Exponentially killed Brownian motion is a transient symmetric Borel right
process, self-dual with respect to Lebesgue measure, and satisfies the
standing hypotheses of Fitzsimmons and Salisbury \cite{FS}.  Let
$U^if(x):=\mathbb E_x\int_0^{\zeta_i}f(W^i(t))\,dt$ and let $\nu_iU^i$ be
the measure $A\mapsto\int U^i\mathbf1_A\,d\nu_i$.  If $\nu_1,\ldots,\nu_s$
are probability measures with $\nu_iU^i\le\operatorname{Leb}$ in the order
of measures, that is, $\nu_iU^i$ has a Lebesgue density at most one, then
\cite[Lem.~3.2(a), Thm.~4.16(i) and proof of Thm.~5.1]{FS} give
\begin{equation}\label{eq:FShitting}
 \int h(x_1,\ldots,x_s)\,d\nu_1(x_1)\cdots d\nu_s(x_s)
 \le2^{3s}\operatorname{Cap}_s(E);
\end{equation}
indeed, the symmetrised kernels in \cite[(4.3)]{FS} all equal
$\mathcal K_s$ because killed Brownian motion is symmetric, so their energy
is the one defining $\operatorname{Cap}_s$.  This is the only input from
multiparameter Markov-process theory.

Let $Q_i$ be half-open unit cubes and let $\nu_i$ be Lebesgue measure
restricted to $Q_i$.  The Green-density identity gives
\[
 \frac{d(\nu_iU^i)}{d\operatorname{Leb}}(y)
 =\int_{Q_i}u_1(x-y)\,dx
 \le\int_{\R^d}u_1(x-y)\,dx=1,
\]
so the $\nu_i$ are admissible in \eqref{eq:FShitting}, and
$\operatorname{Cap}_s(E)=0$ gives $\int_{Q_1\times\cdots\times Q_s}h=0$.
Products of half-open integer unit cubes partition $(\R^d)^s$, so $h=0$
Lebesgue-almost everywhere; integrating against the density of the initial
vector proves the assertion for the killed processes.  Finally, for
$T<\infty$ the killing times all exceed $T$ with probability $e^{-sT}$,
independently of the paths, so a positive hitting probability for the
unkilled processes with $(t_1,\ldots,t_s)\in(0,T]^s$ would give one for the
killed processes.  Take the union over $T\in\N$.
\end{proof}

\begin{lemma}[zero capacity of critical manifolds]\label{lem:polar}
Let $M\subset(\R^d)^s$ be a countable union of compact sets $M_n$, each
contained in a precompact coordinate patch of a $C^1$ embedded manifold of
dimension at most $qs$.  Then $\operatorname{Cap}_s(M)=0$; consequently, $M$
is not hit under any initial law covered by
Lemma~\ref{lem:productcapacity}.
\end{lemma}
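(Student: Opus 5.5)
We need to show that the product capacity $\operatorname{Cap}_s(M)=0$ when $M$ is a countable union of compact pieces $M_n$ of $C^1$ embedded manifolds of dimension $m\le qs$. The plan is to argue piece by piece and then pass to the union.

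\emph{Reduction to one piece.} If $\mu$ is a probability measure on $M=\bigcup_nM_n$ with finite $\mathcal K_s$-energy, then some $M_n$ has $\mu(M_n)>0$. The normalised restriction $\mu|_{M_n}/\mu(M_n)$ has energy at most $\mu(M_n)^{-2}$ times the energy of $\mu$, since the kernel is nonnegative. So it suffices to show that every probability measure $\mu$ on a single compact $M_n$ has infinite energy; then $\operatorname{Cap}_s(M)=0$. The final clause of the statement then follows directly from Lemma~\ref{lem:productcapacity}.

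\emph{Kernel lower bound.} Near the diagonal, $u_1(z)\gtrsim|z|^{-q}$ for $|z|\le1$. Since $\mu$ lives on a compact set, we may split $M_n$ into finitely many pieces of diameter below $1$ and reduce again to one of them. On such a piece,
\[
 \mathcal K_s(x,y)\gtrsim\prod_{i=1}^s|x_i-y_i|^{-q}\ge|x-y|^{-qs},
\]
using $|x_i-y_i|\le|x-y|$. Hence the $\mathcal K_s$-energy dominates the Riesz energy of order $qs$ in $\R^{ds}$. This coordinatewise bound is the key observation: the product kernel is at least the isotropic Riesz kernel of the full dimension $qs$.

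\emph{Riesz energy on an $m$-dimensional $C^1$ patch, $m\le qs$.} Pull $\mu$ back through the bilipschitz chart $\psi:P\subset\R^m\to M$ given by the compact coordinate patch. The problem becomes showing that every probability measure $\nu$ on a compact subset of $\R^m$ has $\iint|u-v|^{-qs}\,d\nu\,d\nu=\infty$ when $qs\ge m$.

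When $qs>m$, this follows from the standard fact that finite $t$-energy forces $\dim_H\ge t$ (see \cite{Falconer}), which would exceed $m$. The critical case $qs=m$ is the main obstacle, and it is handled by a direct computation. Fix $u$; for each dyadic $j\ge0$, put $A_j(u)=\{v:2^{-j-1}<|u-v|\le2^{-j}\}$, so that
\[
 \int|u-v|^{-m}\,d\nu(v)\ge\sum_j2^{jm}\nu(A_j(u)).
\]
There are two cases.
\begin{itemize}
\item If $\nu$ has an atom, the energy is trivially infinite.
\item Otherwise, integrate in $u$ and use $\iint\mathbf 1_{|u-v|\le r}\,d\nu\,d\nu\le Cr^m\cdot(\ldots)$. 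The cleaner route is Fourier: $\int|u-v|^{-m}$ energy on $\R^m$ equals $c\int|\hat\nu(\xi)|^2|\xi|^{0}\cdot\log$-divergent$\ldots$
\end{itemize}
The Fourier route is awkward, so I will instead use the following elementary argument. Cover the support by the $\asymp2^{jm}$ dyadic cubes $Q$ of side $2^{-j}$. Then
\[
 \iint_{|u-v|\le2^{-j}}d\nu\,d\nu\ \ge\ \sum_Q\nu(Q)^2\ \ge\ \frac{\bigl(\sum_Q\nu(Q)\bigr)^2}{\#\{Q\}}\ \gtrsim\ 2^{-jm}
\]
by Cauchy--Schwarz, since the support is bounded. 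Summing the layer-cake identity
\[
 \iint|u-v|^{-m}\ \gtrsim\ \sum_j2^{jm}\,(\nu\otimes\nu)\{|u-v|\le2^{-j}\}
\]
then gives $\sum_j1=\infty$. This settles the critical case and, a fortiori, the case $qs>m$, so the Cauchy--Schwarz bound is the single estimate to verify carefully.
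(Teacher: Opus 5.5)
Your final argument is correct and is essentially the paper's proof. It uses the coordinatewise bound $\mathcal K_s(x,y)\gtrsim|x-y|^{-qs}$, a cover of the patch by $\lesssim\rho^{-m}$ cells of diameter $\lesssim\rho$, Cauchy--Schwarz to get $(\mu\otimes\mu)\{|x-y|\lesssim\rho\}\gtrsim\rho^{m}$, layer-cake divergence for $m\le qs$, and restriction to a piece of positive mass; the only difference is that the paper covers the patch directly by $\rho$-balls instead of pulling back to dyadic cubes in $\R^m$. Delete the abandoned atom/Fourier digression, and note that a dyadic cube of side $2^{-j}$ has diameter $\sqrt m\,2^{-j}$, which only shifts the dyadic index by a constant.
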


\begin{proof}
On a bounded set, the substitution $t=|z|^2v$ in the definition of $u_1$,
with $v$ restricted to a compact subinterval of $(0,\infty)$, gives
$u_1(z)\gtrsim|z|^{-q}$; since $|x_i-y_i|\le|x-y|$, it follows that
$\mathcal K_s(x,y)\gtrsim|x-y|^{-qs}$.  Consider one patch of dimension
$r\le qs$.  It is covered by $N\le C\rho^{-r}$ balls of radius $\rho$;
assigning every point to the first ball containing it gives a Borel
partition into $N$ sets of diameter at most $2\rho$.  For a probability
measure $\mu$ on the patch, $(\mu\times\mu)\{|x-y|\le2\rho\}$ is at least
the sum of the squared masses of the cells, hence at least
$1/N\gtrsim\rho^r$ by Cauchy--Schwarz, and the layer-cake formula gives
\[
 \iint |x-y|^{-qs}\,d\mu(x)d\mu(y)
 \gtrsim\int_0^1\rho^{r-qs-1}\,d\rho=\infty
 \qquad(r\le qs).
\]
Thus every probability measure on one patch has infinite
$\mathcal K_s$-energy.  Disjointify $M=\bigcup_nM_n$ into Borel sets
$F_n\subset M_n$; a probability measure on $M$ gives positive mass to some
$F_n$, its normalised restriction to $F_n$ has infinite energy, and
positivity of the kernel makes the energy of the original measure infinite.
Hence $\operatorname{Cap}_s(M)=0$.
\end{proof}

\begin{lemma}[polar submanifolds for one trace]\label{lem:onepathcapacity}
Let $M\subset\operatorname{Conf}_s(\R^d)$ be a countable union of compact
$C^1$ coordinate patches of dimension at most $qs$.  Almost surely there
are no distinct positive times $t_1,\dots,t_s$ such that
$(W(t_1),\dots,W(t_s))\in M$.
\end{lemma}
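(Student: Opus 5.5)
The plan is to reduce the single-trace statement to Lemma~\ref{lem:polar} and Lemma~\ref{lem:productcapacity} by cutting the path at rational times, using Brownian bridges to make the pieces conditionally independent. Since the $t_i$ are distinct, after relabelling (a permutation of coordinates preserves the form of $M$: a countable union of compact $C^1$ patches of dimension at most $qs$) we may assume $t_1<\dots<t_s$. Then there are rationals $0<u_0<u_1<\dots<u_{s}$ with $t_1\in(u_0,u_1)$ and $t_i\in(u_{i-1},u_i)$. It therefore suffices, for each fixed rational vector $u_0<u_1<\dots<u_s$ (countably many), to show that almost surely there are no times $t_i\in(u_{i-1},u_i)$ with $(W(t_1),\dots,W(t_s))\in M$. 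The separation $u_0>0$ also keeps away from time $0$.

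For fixed $u$, condition on the endpoint values $W(u_0),\dots,W(u_s)$. Given these values, the segments $(W(t))_{t\in[u_{i-1},u_i]}$ are independent Brownian bridges. A Brownian bridge on $[u_{i-1},u_i]$ from $x$ to $y$, restricted to $[u_{i-1},u_i-\delta]$, is absolutely continuous with respect to Brownian motion started at $x$ on that interval. The Radon--Nikodym density is $p_{\delta}(W(u_i-\delta),y)/p_{u_i-u_{i-1}}(x,y)$, which is bounded for fixed $\delta>0$. Hence the joint law of the $s$ segments on $[u_{i-1},u_i-\delta]$ is absolutely continuous with respect to that of $s$ independent Brownian motions started at $W(u_0),\dots,W(u_{s-1})$. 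The law of these starting points is Gaussian with a density on $(\R^d)^s$, since $u_0>0$ and the increments are nondegenerate. Taking the union over rational $\delta\downarrow0$ covers the open intervals.

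We then apply Lemma~\ref{lem:productcapacity}, with Lemma~\ref{lem:polar} giving $\operatorname{Cap}_s(M)=0$. Independent paths $W^i$ with absolutely continuous initial law almost surely have no times $t_i>0$ with $(W^1(t_1),\dots,W^s(t_s))\in M$. Time-shifting each segment to start at $0$ transfers this, through the absolute continuity above, to the event for the single trace on $\prod_i(u_{i-1},u_i-\delta]$; the value at the left endpoint $t_i=u_{i-1}$ is handled by taking $t_i>0$, or by shrinking intervals. A countable union of null events then finishes the proof.

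The main obstacle is making the bridge-to-Brownian absolute continuity precise jointly. One clean route avoids bridges altogether. By the Markov property, the segments $W(u_{i-1}+\cdot)$ for $t\le u_i-u_{i-1}$ form a chain in which segment $i$ starts at the endpoint of segment $i-1$. One compares the law of $(W(u_{i-1}+\cdot)|_{[0,u_i-u_{i-1}-\delta]})_i$ with that of independent Brownian motions whose starting points are independent with the same marginal densities. Conditioning on $W(u_i-\delta)$ and integrating the transition kernel $p_\delta$ shows that the true joint law has a bounded density with respect to this product law. I would verify this density bound carefully; it is the only place where care is needed.
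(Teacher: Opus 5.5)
Your proposal is correct and follows essentially the paper's argument. You cut the time axis at rational points, condition on the path values there so that the pieces become independent Brownian bridges, compare each bridge with Brownian motion on an interval stopping $\delta$ short of its pinned endpoint, and conclude with Lemmas~\ref{lem:productcapacity} and~\ref{lem:polar} and a countable union.

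The only real difference is how the starting vector gets a Lebesgue density. You take it from the unconditional Gaussian law of $(W(u_0),\dots,W(u_{s-1}))$, whereas the paper argues conditionally on fixed endpoint data and starts the comparison at time $a+\eta$ to produce it.

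One small correction concerns your final ``clean route''. There the density with respect to independent starts with the same Gaussian marginals is finite but not bounded: it contains factors like $p_\delta(W(u_i-\delta),W(u_i))/p_{u_i}(0,W(u_i))$, which blow up as $|W(u_i)|\to\infty$. This is harmless, because only absolute continuity is used.
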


\begin{proof}
Distinct spatial coordinates force the hitting times to be distinct.  Cover
the tuples of distinct positive times by countably many products
$J_1\times\cdots\times J_s$ of open intervals with rational endpoints and
pairwise disjoint closures, cover each $J_i$ by countably many compact
intervals $[a_i,b_i]\subset J_i$ whose interiors cover $J_i$, and exhaust
each $(a_i,b_i)$ by the intervals $[a_i+\delta,b_i-\delta]$; it suffices to
treat one resulting product.

A \emph{Brownian bridge} from $x$ at time $a$ to $y$ at time $b$ is the
conditional law of the path on $[a,b]$ given these endpoint values; with
$T=b-a$ and $\widehat W$ a standard Brownian motion started at the origin,
it is the law of
\[
 X(t)=x+\frac{t-a}{T}(y-x)+\widehat W(t-a)
      -\frac{t-a}{T}\widehat W(T),
 \qquad a\le t\le b,
\]
see \cite[Ex.~1.5(a),(b) and Ex.~2.11]{PM}.  By the Markov property and
the independence of increments over disjoint intervals, conditionally on
all values $W(a_i)$ and $W(b_i)$ the path pieces on the intervals
$[a_i,b_i]$ are independent Brownian bridges.  Write
$p_t(x,y)=(2\pi t)^{-d/2}\exp\{-|x-y|^2/(2t)\}$ and choose
$0<\eta<\delta<T/2$.  The value $Z$ of the bridge at time $a+\eta$ has
density
\[
 z\longmapsto\frac{p_\eta(x,z)\,p_{T-\eta}(z,y)}{p_T(x,y)},
\]
and, conditionally on $Z=z$, the law of the bridge on $[a+\eta,b-\delta]$
is absolutely continuous with respect to that of Brownian motion started
at $z$, with Radon--Nikodym derivative
\[
 \frac{p_\delta(W(b-\delta),y)}{p_{T-\eta}(z,y)}
\]
on the corresponding sigma-field.  All denominators are strictly positive.
The interior interval is needed because the bridge is pinned at time $b$,
where it is singular with respect to Brownian motion, while the initial
time $a+\eta$ gives the starting vector a Lebesgue density, as
Lemma~\ref{lem:productcapacity} requires; every time tested in
$[a+\delta,b-\delta]$ is at least $\delta-\eta>0$ after the comparison
Brownian motion starts, so hitting is tested at positive times.  Thus, on
the smaller compact intervals, the joint
starting vector has a Lebesgue density and, conditionally on it, the
independent bridge pieces are absolutely continuous with respect to
independent Brownian motions at strictly positive times, the joint
Radon--Nikodym derivative being the product of the displayed finite positive
factors.  Every null event for independent Brownian motions is therefore
null for the bridge pieces, and Lemmas~\ref{lem:productcapacity} and
\ref{lem:polar} make the hitting probability zero.  Integrating over the
endpoint data, exhausting the intervals from within and taking the countable
union proves the claim.
\end{proof}

The argument includes the critical case $r=qs$, where the energy diverges
logarithmically; this is where a first-moment argument alone would not
suffice.

\subsection{The global trichotomy and subconfigurations}\label{subsec:global}

All occurrence events below are measurable: for a compact parameter set $K$
in one chart and $T<\infty$, the event that a copy is realised by times in
$[0,T]$ is the zero set of
\[
 W\longmapsto
 \min_{\substack{\omega\in K\\t_1,\ldots,t_k\in[0,T]}}
 \max_{1\le i\le k}|W(t_i)-\psi_i(\omega)|,
\]
a continuous function on path space with the topology of uniform
convergence on $[0,T]$ (the minimum is over a compact set), and countable
exhaustions of each chart, of the time axis and of the atlas make the full
occurrence event Borel.

\begin{proposition}[subconfiguration obstruction]\label{prop:obstruction}
Let $\mathcal F$ be covered by a countable rank-regular $C^1$ atlas as in
Definition~\ref{def:atlas}, without any assumption of scale invariance.  If
on one of its charts a nonempty submap $\Psi_I$ has rank
\[
 r_I\le(d-2)\,\# I,
\]
then almost surely the trace contains no configuration from that chart.  In
particular, if every chart has such a subconfiguration, almost surely the
trace contains no member of $\mathcal F$.
\end{proposition}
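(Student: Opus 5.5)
The plan is to reduce the statement to Lemma~\ref{lem:onepathcapacity}, applied to the set of possible placements of the labels in $I$. Fix the chart $\Psi_\nu:U_\nu\to\operatorname{Conf}_k(\R^d)$ and the set $I$ with $s:=\#I$ and $r_I\le qs$. Every configuration $\Psi_\nu(\omega)$ from this chart that lies on the trace restricts, via its labels in $I$, to a point $\Psi_I(\omega)\in\operatorname{Conf}_s(\R^d)$ whose coordinates are all in $B$. It therefore suffices to show that almost surely no point of $\Psi_I(U_\nu)$ has all its coordinates in $B$.

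First I would dispose of configurations with a point at the origin. The trace visits $0$ only at time $0$ (polarity of points), so at most one label of $I$ can sit at the origin, and it is hit at $t=0$. If $s=1$, then $r_I=d>q$ because each coordinate is a submersion, so $r_I\le qs$ already forces $s\ge2$. The case of one label at the origin is then handled by the same argument applied to the remaining $s-1$ labels on the locus where that label vanishes. There the rank drops by at least $d$, and $r_I-d\le q(s-1)$. Alternatively, and more simply, observe that the set of $\omega$ with some $\psi_i(\omega)=0$ is a $C^1$ submanifold, because $\psi_i$ is a submersion. Restricting $\Psi_{I\setminus\{i\}}$ to it gives constant-rank images of dimension at most $r_I-d$, and these fall under the same lemma with $s-1$ points.

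The main case is that all points of $\Psi_I(\omega)$ are nonzero, hence hit at distinct positive times, since they are distinct points. By the constant-rank theorem, every $\omega_0\in U_\nu$ has a neighbourhood whose image under $\Psi_I$ is an $r_I$-dimensional embedded $C^1$ submanifold of $\operatorname{Conf}_s(\R^d)$. Second countability of $U_\nu$ lets me write $\Psi_I(U_\nu)$ as a countable union of compact sets, each lying in a precompact coordinate patch of such a submanifold. The dimension of each patch is $r_I\le qs$. Lemma~\ref{lem:onepathcapacity} then says that almost surely there are no distinct positive times $t_1,\dots,t_s$ with $(W(t_1),\dots,W(t_s))$ in this set, and intersecting these events gives the first claim. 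For the ``in particular'' statement I would take the countable union over the charts of the atlas.

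The only delicate step is the constant-rank reduction: I have to produce patches that are genuinely embedded rather than merely immersed images. I would shrink each neighbourhood until the local normal form makes $\Psi_I$ a projection followed by a graph map, which yields an embedded slice. The measurability of the exceptional events is covered by the earlier discussion of occurrence events.
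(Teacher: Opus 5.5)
Your proposal is correct and follows the paper's proof essentially step by step: the constant-rank theorem and Lemma~\ref{lem:onepathcapacity} dispose of nonanchored placements, and the anchored case restricts $\Psi_{I\setminus\{j\}}$ to the submanifold $Z_j=\psi_j^{-1}(0)$, whose image has dimension $r_I-d<q(\#I-1)$. You only assert that this restriction has constant rank $r_I-d$. The paper justifies it with the identity $\operatorname{rank}(A,C)=\operatorname{rank}A+\operatorname{rank}(C|_{\ker A})$, where $A=D\psi_j$ is surjective and $\ker A=TZ_j$, and this constancy is what the constant-rank theorem needs.
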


\begin{proof}
By the constant-rank theorem, the image of $\Psi_I$ is locally a countable
union of $r_I$-dimensional $C^1$ submanifolds.  On the part where none of
its coordinates is the origin, Lemma~\ref{lem:onepathcapacity} rules out a
hit.

It remains to treat configurations containing the starting point.  Since
points are polar, the trace visits the origin only at time $0$, and by
distinctness at most one label $j\in I$ can be at the origin.  Note that
$\# I\ge2$, since $r_{\{i\}}=d>d-2$.  As $\psi_j$ is a submersion,
$Z_j=\{\omega:\psi_j(\omega)=0\}$ is a $C^1$ submanifold.  With
$A=D\psi_j$, which is onto, and $C=D\Psi_{I\setminus\{j\}}$, the identity
\[
 \operatorname{rank}(A,C)
 =\operatorname{rank}A+\operatorname{rank}(C|_{\ker A}),
\]
together with $\ker A=TZ_j$, gives
\[
 \operatorname{rank}
 D\!\left(\Psi_{I\setminus\{j\}}\big|_{Z_j}\right)=r_I-d
\]
at every point of $Z_j$.  By the constant-rank theorem and a compact
exhaustion, the image of $\Psi_{I\setminus\{j\}}|_{Z_j}$ is a countable
union of compact $C^1$ coordinate patches of dimension
\[
 r_I-d\le q\,\# I-d=q(\# I-1)-2<q(\# I-1),
\]
so the remaining positive-time coordinates are polar by
Lemma~\ref{lem:onepathcapacity}.  If the anchored label does not belong to
$I$, the first paragraph applies to $\Psi_I$ directly.  A countable atlas
and compact exhaustion complete the proof.
\end{proof}

The absence argument is local and does not use scale invariance, which
enters only when local positive probability is upgraded to probability one.

\begin{theorem}[pattern trichotomy]\label{thm:trichotomy}
Let $\mathcal F$ be a scale-invariant family in $\R^d$, $d\ge3$, with a
rank-regular fixed-rank scale atlas of effective dimension $D$, and put
$\Delta_I=r_I-(d-2)\,\# I$ and $\Delta=D-k(d-2)$.
\begin{enumerate}[label=\normalfont(\alph*)]
\item If $\Delta>0$ and every proper nonempty subconfiguration has positive
excess, almost surely the trace of Brownian motion contains a member of
$\mathcal F$.
\item If $\Delta<0$, almost surely it contains no member of $\mathcal F$.
\item If $\Delta=0$, almost surely it contains no member of $\mathcal F$.
\end{enumerate}
\end{theorem}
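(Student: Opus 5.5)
Parts (b) and (c) both follow from Proposition~\ref{prop:obstruction} applied to the full label set $I=\{1,\dots,k\}$. On every chart $\Psi_\nu$ is an embedding, so $r_{\{1,\dots,k\}}=D$. Hence $\Delta\le0$ is exactly the inequality $r_I\le(d-2)\,\#I$ for this $I$. The fixed-rank hypothesis makes $D$ the same on all charts, so every chart carries such a subconfiguration, and the proposition gives almost-sure absence. This uses neither scale invariance nor \ref{N3}. The critical case $\Delta=0$ is already covered, since Lemma~\ref{lem:polar} allows dimension equal to $qs$. No separate argument is needed for (c) beyond noting that the capacity test is decisive at equality.

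For (a), I first produce a single normalised positive-excess chart. Since $\mathcal F$ is nonempty, pick a configuration in it. By scale invariance and the fact that at most one point can be the origin, I expect to find a nonanchored member; if every member of some chart contained the origin, I would move to a nearby parameter, using that each coordinate is a submersion, so $\psi_j^{-1}(0)$ has empty interior. By Definition~\ref{def:atlas}, this nonanchored parameter refines to a normalised rank chart $(K,\Phi)$: I shrink $K$ to a small cube so that the labels are separated, bounded away from $0$, and each label has image of diameter at most $a/10$, which gives \ref{N1}. \ref{N2} comes from Corollary~\ref{cor:Timpliesreg}. The fixed-rank hypothesis transfers the assumed positivity of $\Delta_I$ for proper $I$ to this chart after relabelling, giving \ref{N3}. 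Theorem~\ref{thm:localtrichotomy} then gives $\mathbb P(A)>0$, where $A$ is the event that $B$ contains a member of $\mathcal F$, because $A$ contains the event that a copy occurs with parameter in $K$.

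The final step is the 0--1 upgrade. The event $A$ depends only on the trace, and the property ``$B$ contains a member of $\mathcal F$'' is invariant under replacing $B$ by $\lambda^{-1}B$, by scale invariance of $\mathcal F$. Measurability of $A$ follows from the countable exhaustion described at the start of Section~\ref{subsec:global}. Lemma~\ref{lem:scaleergodic} then shows that $A$ is $S_\lambda$-invariant, hence has probability $0$ or $1$, and positivity forces probability one.

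I expect the main obstacle to be the chart-refinement step. I must check carefully that positivity of the excesses is preserved, which holds because ranks are constant on the chart, and that the labelled-box conditions \ref{N1} can be arranged near an arbitrary nonanchored point. This should work because the points of a configuration are distinct and $\Phi$ is continuous.
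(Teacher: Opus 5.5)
Your proposal is correct and follows the paper's proof essentially step for step: (b) and (c) come from Proposition~\ref{prop:obstruction} applied to the full label set, with $r_{\{1,\dots,k\}}=D$ on every chart, and (a) comes from a nonanchored normalised positive-excess chart, Theorem~\ref{thm:localtrichotomy}, and the scaling 0--1 law of Lemma~\ref{lem:scaleergodic}. The appeal to scale invariance when looking for a nonanchored parameter is superfluous, since dilation fixes the origin; your fallback argument (each zero fibre $\psi_j^{-1}(0)$ is closed with empty interior, so finitely many of them cannot cover an open chart) is the one the paper actually uses.
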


\begin{proof}
If $\Delta>0$, choose a nonanchored parameter and a sufficiently small
normalised positive-excess compact chart around it; nonanchored parameters
exist because every coordinate map is a submersion, so each zero fibre
$\psi_i^{-1}(0)$ is a codimension-$d$ submanifold of Lebesgue measure zero,
and finitely many of them cannot cover a nonempty open chart.
Theorem~\ref{thm:localtrichotomy} gives a copy with positive probability.
The event that the trace contains some member of $\mathcal F$ is invariant
under spatial dilation, since the scaled path has trace $\lambda^{-1}B$ and
$\mathcal F$ is scale invariant; by Lemma~\ref{lem:scaleergodic} its
probability is zero or one, hence one.

If $\Delta\le0$, the full-label submap has nonpositive excess on every
chart.  Apply Proposition~\ref{prop:obstruction} to each chart, which
covers the strict and the critical inequality alike as well as
configurations through the starting point, and use countability of the
atlas.
\end{proof}

\begin{corollary}[sharp subconfiguration criterion]\label{cor:subcriterion}
For a scale-invariant family with a rank-regular fixed-rank atlas, a copy
occurs almost surely if and only if
\[
 \min_{\varnothing\ne I\subseteq\{1,\dots,k\}}
 \bigl(r_I-(d-2)\,\# I\bigr)>0 ;
\]
if the minimum is nonpositive, almost surely no copy occurs.
\end{corollary}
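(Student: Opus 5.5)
The corollary is a repackaging of Theorem~\ref{thm:trichotomy} together with Proposition~\ref{prop:obstruction}. I would split according to the sign of the minimum
\[
 m:=\min_{\varnothing\ne I\subseteq\{1,\dots,k\}}\Delta_I .
\]
By the fixed-rank hypothesis the ranks $r_I$, and hence $m$, are the same on every chart after relabelling. The collection of inequalities is invariant under permutations of labels, so $m$ is a well-defined quantity attached to $\mathcal F$.

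\emph{Positive minimum.} If $m>0$, then in particular $\Delta=\Delta_{\{1,\dots,k\}}>0$ and every proper nonempty $I$ has $\Delta_I>0$. These are exactly the hypotheses of Theorem~\ref{thm:trichotomy}(a), so the trace contains a member of $\mathcal F$ almost surely. The one point to check is that the normalised positive-excess chart used there exists. Condition \ref{N3} is needed on the refined compact chart, and this is automatic because ranks are inherited by subcharts: $r_I$ is constant on a neighbourhood of each compact subchart, so the excesses do not change under refinement.

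\emph{Nonpositive minimum.} Suppose $m\le0$ and choose $I$ with $r_I\le(d-2)\,\#I$. By the fixed-rank property this same inequality, for the correspondingly permuted label set, holds on every chart of the atlas. Proposition~\ref{prop:obstruction}, which needs neither scale invariance nor positivity elsewhere, then shows that almost surely no configuration from any given chart lies on the trace. This covers the strict and critical cases, as well as anchored configurations through the origin. A countable union over the atlas finishes this case.

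\emph{Equivalence and main point.} Combining the two cases gives the ``if and only if'': occurrence has probability one exactly when $m>0$, and probability zero otherwise. So occurrence with positive probability already forces $m>0$. The only step requiring care, and the one I expect to be the real obstacle, is the transfer of the bad label set $I$ across charts. Each chart carries its own local labelling, so one must use the permutation built into Definition~\ref{def:atlas} to say that every chart has some subconfiguration with nonpositive excess. That is precisely the hypothesis of the ``in particular'' clause of Proposition~\ref{prop:obstruction}.
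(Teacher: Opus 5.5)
Your proposal is correct and matches the paper's proof: Theorem~\ref{thm:trichotomy}(a) handles the all-positive case. In the nonpositive case you use the fixed-rank property, after the chartwise permutation of labels, to get a nonpositive subconfiguration on every chart, then apply Proposition~\ref{prop:obstruction} chart by chart and use countability of the atlas.
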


\begin{proof}
If every excess is positive, Theorem~\ref{thm:trichotomy}(a) applies.  If
some excess is nonpositive, the fixed-rank property gives, after the
chartwise permutation of labels, a nonpositive subconfiguration on every
chart; Proposition~\ref{prop:obstruction} excludes each chart, and
countability of the atlas excludes the whole family.
\end{proof}

\begin{corollary}[chartwise criterion]\label{cor:chartwise}
Let a scale-invariant family have a rank-regular scale atlas whose ranks and
parameter dimensions may vary between charts.  If one chart has positive
excess for every nonempty subconfiguration, then a copy occurs almost
surely.  If every chart has a nonpositive subconfiguration, then almost
surely no copy occurs.
\end{corollary}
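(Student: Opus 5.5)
The corollary splits into its two assertions, and each follows from results already proved once the atlas is reduced to the right chart.

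\emph{Occurrence.}  Let $\Psi_\nu$ be a chart all of whose nonempty subconfigurations have positive excess, that is, $r_I>(d-2)\#I$ for every nonempty $I$ on that chart.
\begin{enumerate}
\item Choose a nonanchored parameter in $U_\nu$.  This is possible as in the proof of Theorem~\ref{thm:trichotomy}(a): each coordinate is a submersion, so each zero fibre has codimension $d$ and Lebesgue measure zero, and finitely many of them cannot cover an open set.
\item Refine around this parameter to a normalised rank chart $(K,\Phi)$, as Definition~\ref{def:atlas} allows.  Shrinking $K$ so that the labels lie in disjoint boxes away from the origin gives \ref{N1}.
\item Obtain \ref{N2}, including the flag estimate \eqref{eq:flagbox}, from Lemma~\ref{lem:measurebox} via Corollary~\ref{cor:Timpliesreg}.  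The embedding property, the submersions and the constant ranks are inherited by the restriction of $\Psi_\nu$.
\item Check that the ranks $r_I$ on the refinement are still those of $\Psi_\nu$.  They are, because the constant rank holds on a neighbourhood of each compact subchart.  Hence \ref{N3} holds, and so does $\Delta=r_{\{1,\dots,k\}}-qk>0$.
\item Apply Theorem~\ref{thm:localtrichotomy}: a copy with parameter in $K$ occurs with positive probability.
\end{enumerate}
Finally, the event that the trace contains some member of $\mathcal F$ is invariant under spatial dilation, because $\mathcal F$ is scale invariant and $S_\lambda W$ has trace $\lambda^{-1}B$.  By Lemma~\ref{lem:scaleergodic} this event has probability zero or one, and since its probability is positive it is one.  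No fixed-rank assumption is used here, only the single good chart.

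\emph{Nonoccurrence.}  Suppose every chart $\Psi_\nu$ has some nonempty $I_\nu$ with $r_{I_\nu}\le(d-2)\#I_\nu$.
\begin{enumerate}
\item Apply Proposition~\ref{prop:obstruction} to each chart separately.  It needs no scale invariance and allows the ranks to vary from chart to chart.  It gives a probability-one event $\Omega_\nu$ on which no configuration from $\Psi_\nu$ lies in the trace.
\item Intersect the events $\Omega_\nu$.  The atlas is countable, so the intersection still has probability one.
\item Every member of $\mathcal F$ is in the image of some chart after local labelling, so on this intersection the trace contains no member of $\mathcal F$.
\end{enumerate}

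\emph{Main obstacle.}  The only real point is the bookkeeping in step~4 of the occurrence part.  The normalised refinement must keep the chart's exact ranks, and it must contain a nonanchored parameter with the labels separated.  I expect to handle this exactly as in the proof of Theorem~\ref{thm:trichotomy}(a): shrink to a small compact cube inside the neighbourhood where all submap ranks are constant.
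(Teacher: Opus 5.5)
Your proposal is correct and follows the paper's own proof. For occurrence, you get local positive probability from Theorem~\ref{thm:localtrichotomy} on a normalised refinement of the good chart and upgrade it to probability one with Lemma~\ref{lem:scaleergodic}; for absence, you apply Proposition~\ref{prop:obstruction} chartwise and use countability of the atlas. Your only addition is to spell out the refinement bookkeeping (the nonanchored parameter and \ref{N1}--\ref{N3}) that the paper leaves implicit.
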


\begin{proof}
An all-positive chart gives local positive probability by
Theorem~\ref{thm:localtrichotomy}, and scaling ergodicity upgrades the
occurrence event for the full family to probability one.  In the other
direction, Proposition~\ref{prop:obstruction} gives a null occurrence event
for each chart, and the atlas is countable.
\end{proof}

\section{Arithmetic progressions and other patterns}
\label{section:consequences}

For the families below, all of which have rank-regular fixed-rank scale
atlases, Corollary~\ref{cor:subcriterion} reduces occurrence to a finite
rank computation: the family occurs almost surely exactly when
$\Delta_I=r_I-(d-2)\,\# I>0$ for every nonempty $I$, and is almost surely
absent as soon as one $\Delta_I$ is nonpositive.  Every finite pattern $F$
is deterministic, so the exceptional null set may depend on $F$.  The
following lemma supplies the atlas in each case.

\begin{lemma}[atlases from global charts]\label{lem:atlasfromchart}
Let $\mathcal F$ be a nonempty scale-invariant family of $k$-point
configurations covered, after local labelling, by countably many $C^1$
embeddings $\Psi_\nu:U_\nu\to\operatorname{Conf}_k(\R^d)$ with $U_\nu$
open, such that every coordinate of $\Psi_\nu$ is a submersion and every
submap $\Psi_{\nu,I}$ has constant rank $r_I$ on $U_\nu$, with the same
$D_\nu=D$ and the same ranks $r_I$ for all $\nu$ after relabelling.  Then
$\mathcal F$ has a rank-regular fixed-rank scale atlas.
\end{lemma}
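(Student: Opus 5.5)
The plan is to check the requirements of Definition~\ref{def:atlas} one at a time, since the hypotheses of the lemma are essentially that definition with the compact-subchart conditions strengthened to global ones on each $U_\nu$.

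First, the countable family $\{\Psi_\nu\}$ is taken as the atlas. Each $\Psi_\nu$ is a $C^1$ embedding and every coordinate is a submersion. Each submap $\Psi_{\nu,I}$ has constant rank $r_I$ on all of the open set $U_\nu$. Hence, for any compact subchart $K\subset U_\nu$, the open set $U_\nu$ itself is a neighbourhood of $K$ on which all ranks are constant. This is exactly the rank-regularity requirement. The fixed-rank requirement (common $D$ and common ranks $r_I$ after relabelling) is assumed verbatim.

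Second, the remaining clause to verify is that every nonanchored parameter can be refined to a normalised rank chart, that is, (N1)--(N2) on a small compact cube. Fix $\omega_0\in U_\nu$ with all $\psi_i(\omega_0)\neq0$. Put $a_0:=\min\bigl(\min_i|\psi_i(\omega_0)|,\ \min_{i\ne l}|\psi_i(\omega_0)-\psi_l(\omega_0)|\bigr)$; this is positive by distinctness and nonanchoring. Also put $a:=\min(a_0/2,\,(2\max_i|\psi_i(\omega_0)|)^{-1})$. By continuity, on a sufficiently small closed cube $K\ni\omega_0$ each label moves by at most $a/20$. This gives $a\le|\psi_i|\le a^{-1}$, the separation $|\psi_i(\omega)-\psi_l(\omega')|\ge a$ for $i\neq l$, and label images of diameter at most $a/10$; so (N1) holds. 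For (N2), the embedding, submersion and constant-rank properties hold on $U_\nu$, which is a neighbourhood of $K$. The flag estimate \eqref{eq:flagbox} then follows from Lemma~\ref{lem:measurebox} applied for each of the finitely many permutations, as recorded in Corollary~\ref{cor:Timpliesreg}.

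Third, scale invariance and nonemptiness of $\mathcal F$ are hypotheses, and the charts are allowed to pass through anchored configurations. So nothing further is needed.

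The only step with any content is the choice of $a$ in (N1), which uses continuity and compactness. This step is routine, so I expect no real obstacle: the lemma is bookkeeping that packages Lemma~\ref{lem:measurebox} and the constant-rank hypotheses into Definition~\ref{def:atlas}.
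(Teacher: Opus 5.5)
Your proposal is correct and follows the paper's proof. The given charts already form the countable rank-regular fixed-rank atlas; the paper phrases the compact-subchart condition through a compact exhaustion of each second-countable $U_\nu$, whereas you simply note that $U_\nu$ itself is a neighbourhood of every compact subchart. A nonanchored parameter is then shrunk by continuity to a small cube satisfying (N1), with (N2) supplied by Corollary~\ref{cor:Timpliesreg}, and your explicit choice of $a$ makes the paper's ``suitable $a>0$'' concrete.
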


\begin{proof}
Each $U_\nu$ is second countable, hence exhausted by countably many
compact cubes, and the ranks are constant on a neighbourhood of each of
them; so the $\Psi_\nu$ form a countable rank-regular atlas, fixed-rank by
assumption.  It remains to refine a nonanchored parameter
$\omega_0\in U_\nu$ to a normalised rank chart.  The $k$ points of
$\Psi_\nu(\omega_0)$ are distinct and nonzero, so by continuity a
sufficiently small compact cube $K\ni\omega_0$ has labelled images lying
in pairwise disjoint balls away from the origin, each image of diameter at
most $a/10$ for a suitable $a>0$; this is \ref{N1}, and
Corollary~\ref{cor:Timpliesreg} gives \ref{N2}.
\end{proof}

\subsection{Arithmetic progressions in every dimension}\label{subsec:APall}

For $k\ge2$, the ordered $k$-term arithmetic progressions in $\R^d$ form
the scale-invariant family
\[
 \Phi_k(a,g)=\bigl(a+(i-1)g\bigr)_{i=1}^k,
 \qquad
 (a,g)\in\R^d\times(\R^d\setminus\{0\}),
\]
with open parameter domain of dimension $D=2d$; the length of the gap is a
genuine parameter, not a normalisation.

\begin{lemma}[ranks for progressions]\label{lem:APnondeg}
The progression family has a rank-regular fixed-rank scale atlas, with
\[
 r_I=
 \begin{cases}
 d,&\# I=1,\\
 2d,&\# I\ge2,
 \end{cases}
 \qquad
 \Delta_I=
 \begin{cases}
 2,&\# I=1,\\
 2d-(d-2)\,\# I,&\# I\ge2.
 \end{cases}
\]
\end{lemma}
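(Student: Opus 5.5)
The plan is to verify the hypotheses of Lemma~\ref{lem:atlasfromchart} for the single global chart $\Phi_k$ on $U=\R^d\times(\R^d\setminus\{0\})$, restricted to the open set where the $k$ points are distinct (automatic since $g\ne0$). Scale invariance holds because $\lambda\Phi_k(a,g)=\Phi_k(\lambda a,\lambda g)$, and the parameter domain is invariant under this dilation. Every unordered $k$-term progression with distinct terms is the image of some $(a,g)$, so after local labelling the family is covered by this one chart (together with its reversal, which is another chart of the same form with the same ranks).

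The key computation is linear algebra. The coordinate maps are $\phi_i(a,g)=a+(i-1)g$, with constant differential $D\phi_i=(I_d,\,(i-1)I_d)$. This has rank $d$, so each coordinate is a submersion and $r_{\{i\}}=d$. For $I$ with two distinct labels $i\ne j$, the $2d\times 2d$ block matrix with rows $(I_d,(i-1)I_d)$ and $(I_d,(j-1)I_d)$ has determinant $(j-i)^d\ne0$. Hence $r_I=2d$ for every $I$ with $\#I\ge2$, since it contains such a pair and cannot exceed $D=2d$. All ranks are constant because the map is affine. The formula for $\Delta_I$ then follows from the definition $\Delta_I=r_I-(d-2)\#I$; singletons give $d-(d-2)=2$.

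Next, $\Phi_k$ must be an embedding. Injectivity follows because $\phi_1$ and $\phi_2$ determine $a$ and $g=\phi_2-\phi_1$. The differential is injective since $r_{\{1,\dots,k\}}=2d=D$. The inverse on the image is the restriction of the continuous linear map $(x_i)\mapsto(x_1,x_2-x_1)$, so $\Phi_k$ is a homeomorphism onto its image. Lemma~\ref{lem:atlasfromchart} then gives the rank-regular fixed-rank scale atlas.

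There is no serious obstacle. The only point needing care is the labelling issue: an unordered progression admits two orderings, the original and its reverse. Both come from charts of the same form with identical rank data, so the fixed-rank condition holds after relabelling.
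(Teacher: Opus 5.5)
Your proposal is correct and follows the paper's proof essentially step by step. The ingredients match: the constant differentials $u+(i-1)v$ are surjective; any two-term block is invertible (your determinant $(j-i)^d$ is the paper's ``difference of components'' argument); the first two coordinates recover $(a,g)$ continuously; dilation acts by $(a,g)\mapsto(\lambda a,\lambda g)$; Lemma~\ref{lem:atlasfromchart} then applies, and the reversal labelling is the only, harmless, redundancy.
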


\begin{proof}
The differential of the term with index $i$ is $A_i(u,v)=u+(i-1)v$, which
is surjective.  For $i_1\ne i_2$, the map
$(u,v)\mapsto\bigl(u+(i_1-1)v,u+(i_2-1)v\bigr)$ is invertible, since the
difference of its components gives $(i_2-i_1)v$ and then either component
gives $u$; so every submap with at least two terms has rank $2d$.  The full
ordered map is an embedding, because its first two coordinates recover
$(a,g)$ continuously, each coordinate is a submersion, because translation
is free, and dilation acts on parameters by $(a,g)\mapsto(\lambda a,\lambda g)$.
Lemma~\ref{lem:atlasfromchart} gives the atlas.  An unordered progression
has the two local labellings $(a,g)$ and $(a+(k-1)g,-g)$; this finite
redundancy changes neither dimensions nor ranks.
\end{proof}

For $\# I\ge2$ the excess decreases with $\# I$, and singleton excess is
$2$, so all subconfiguration excesses are positive exactly when the full
excess $\Delta=2d-k(d-2)$ is.  For $d\ge3$,
Corollary~\ref{cor:subcriterion} therefore says that a $k$-term progression
occurs almost surely exactly when $k<2d/(d-2)$, and is absent at and beyond
equality.

\begin{corollary}[longest progression in the spatial trace]
\label{cor:APall}
For $d\ge3$, the trace of Brownian motion in $\R^d$ almost surely contains
an arithmetic progression of length $\lceil2d/(d-2)\rceil-1$ and almost
surely contains none of length $\lceil2d/(d-2)\rceil$.  The maximal length
is thus
\[
 5\quad(d=3),\qquad
 3\quad(d=4,5),\qquad
 2\quad(d\ge6).
\]
\end{corollary}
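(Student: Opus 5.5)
The corollary follows by combining Lemma~\ref{lem:APnondeg} with Corollary~\ref{cor:subcriterion}, plus an elementary integer computation. By Lemma~\ref{lem:APnondeg}, the $k$-term progression family has a rank-regular fixed-rank scale atlas. Its excesses are $\Delta_{\{i\}}=2$ and $\Delta_I=2d-(d-2)\#I$ for $\#I\ge2$. Since $d\ge3$, the latter is strictly decreasing in $\#I$, so
\[
 \min_{\varnothing\ne I}\Delta_I=\min\bigl(2,\,2d-(d-2)k\bigr).
\]
This minimum is positive exactly when $k<2d/(d-2)$. Corollary~\ref{cor:subcriterion} then gives the dichotomy for each fixed $k\ge2$. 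If $k<2d/(d-2)$, a $k$-term progression occurs almost surely. If $k\ge2d/(d-2)$, including equality, which is handled by the critical case of Proposition~\ref{prop:obstruction}, almost surely none occurs.

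Next I would identify $K:=\lceil2d/(d-2)\rceil$ and check two facts.
\begin{itemize}
\item $K-1<2d/(d-2)\le K$, which holds by the definition of the ceiling.
\item $K-1\ge2$, so that Lemma~\ref{lem:APnondeg}, stated for $k\ge2$, applies. For $d\ge3$ we have $2d/(d-2)>2$, hence $K\ge3$.
\end{itemize}
Applying the dichotomy with $k=K-1$ gives occurrence, and with $k=K$ gives absence. Any progression of length $k'>K$ contains a $K$-term subprogression, namely its first $K$ terms. So absence of $K$-term progressions implies absence of all longer ones almost surely, and $K-1$ is the maximal length.

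Finally, the table is arithmetic:
\begin{itemize}
\item $d=3$: $2d/(d-2)=6$, so $K=6$ and the maximal length is $5$.
\item $d=4$: $2d/(d-2)=4$, so $K=4$ and the maximal length is $3$.
\item $d=5$: $2d/(d-2)=10/3$, so $K=4$ and the maximal length is $3$.
\item $d\ge6$: $2<2d/(d-2)\le3$, so $K=3$ and the maximal length is $2$.
\end{itemize}

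Everything substantive is contained in the earlier results, so there is no real obstacle. The one point needing care is the integer case $d\in\{3,4\}$, where $2d/(d-2)$ is an integer and the full excess is exactly $0$ at $k=K$. There the absence relies on the critical case of Theorem~\ref{thm:trichotomy}(c), that is, on the capacity argument of Lemma~\ref{lem:polar} with $r=qs$. This is exactly why the ceiling appears rather than the floor plus one.
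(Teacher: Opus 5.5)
Your proposal is correct and follows the paper's proof. You apply Corollary~\ref{cor:subcriterion} with the ranks of Lemma~\ref{lem:APnondeg} to get a progression of length $\lceil2d/(d-2)\rceil-1$ (the largest integer strictly below $2d/(d-2)$), and Proposition~\ref{prop:obstruction} to exclude length $\lceil2d/(d-2)\rceil$ even at zero excess; longer progressions contain an excluded one, and substitution gives the table. One small slip in your closing remark: $2d/(d-2)$ is also an integer for $d=6$, where it equals $3$, so the critical zero-excess case also occurs for $3$-term progressions in $\R^6$. Your table handles this correctly, but the commentary should list $d\in\{3,4,6\}$, not only $d\in\{3,4\}$.
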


\begin{proof}
The largest integer strictly smaller than $2d/(d-2)$ is
$\lceil2d/(d-2)\rceil-1$.  Corollary~\ref{cor:subcriterion} gives a
progression of this length, and Proposition~\ref{prop:obstruction} excludes
the next length, also when its excess is exactly zero; a longer progression
contains one of the excluded length.  Substituting $d=3,4,5$ and $d\ge6$
gives the displayed values.
\end{proof}

Together with Corollary~\ref{cor:planarAP}, this gives the progression
threshold in every dimension.  The critical equality $k(d-2)=2d$ occurs
only when $d-2$ divides $4$, that is, for a $6$-term progression in
$\R^3$, a $4$-term progression in $\R^4$ and a $3$-term progression in
$\R^6$; these are the progression cases that need the critical capacity
argument of Section~\ref{subsec:nonpositive}.

\subsection{Homothetic copies}\label{subsec:homothetic}

Fix $d\ge3$ and distinct points $F=\{f_1,\dots,f_k\}\subset\R^d$, $k\ge2$.
Positive homothetic copies of $F$ are parametrised by
\[
 \Phi_F(a,t)=\bigl(a+tf_i\bigr)_{i=1}^k,
 \qquad (a,t)\in\R^d\times(0,\infty),
\]
so $D=d+1$.  The differential of the $i$-th coordinate,
$A_i(u,s)=u+sf_i$, has rank $d$, and for $\# I\ge2$ the combined map has
rank $d+1$: if $u+sf_i=u+sf_l=0$ with $i\ne l$, then $s(f_i-f_l)=0$, so
$s=0$ and $u=0$.  Hence singleton excesses equal $2$ and
$\Delta_I=d+1-(d-2)\,\# I$ for $\# I\ge2$, so all subconfiguration excesses
are positive exactly when the full excess is.

\begin{corollary}[homothetic copies]\label{cor:homothetic}
For $d\ge3$ and every fixed set $F\subset\R^d$ of $k\ge2$ distinct points,
the trace of Brownian motion almost surely contains a positive homothetic
copy of $F$ if
\[
 k<\frac{d+1}{d-2},
\]
and almost surely contains none if $k\ge(d+1)/(d-2)$.
\end{corollary}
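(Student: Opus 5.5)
The plan is to apply Corollary~\ref{cor:subcriterion} to the positive homothety family of $F$. Its hypotheses need a rank-regular fixed-rank scale atlas, and the rank data have already been computed just before the statement.

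\emph{Atlas.} Take the global chart $\Phi_F$ on $U=\R^d\times(0,\infty)$, with labels fixed by the enumeration of $F$.
\begin{itemize}
\item The family is scale invariant: dilation by $\lambda$ acts on parameters by $(a,t)\mapsto(\lambda a,\lambda t)$.
\item $\Phi_F$ is injective. Two distinct labels $f_i\ne f_l$ recover $t=|\phi_i-\phi_l|/|f_i-f_l|$ and then $a=\phi_i-tf_i$, continuously. So $\Phi_F$ is a homeomorphism onto its image.
\item The differential is injective, since the rank for $\#I\ge2$ is $d+1=D$.
\item Each coordinate is a submersion, because $A_i(u,s)=u+sf_i$ is onto.
\item Every submap has constant rank on all of $U$: rank $d$ for singletons and $d+1$ otherwise, by the computation preceding the statement.
\item The images lie in $\operatorname{Conf}_k(\R^d)$ because $t>0$ and the $f_i$ are distinct.
\end{itemize}
Lemma~\ref{lem:atlasfromchart} then gives a rank-regular fixed-rank scale atlas consisting of a single chart. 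If unordered copies are meant, the finitely many relabellings of $F$ that coincide with a homothetic image give finitely many further charts with the same ranks, exactly as for progressions. So I would note this and otherwise work with the ordered family.

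\emph{Excesses.} Singletons have $\Delta_{\{i\}}=d-(d-2)=2>0$. For $\#I=n\ge2$ we have $\Delta_I=d+1-(d-2)n$, which is nonincreasing in $n$ because $d\ge3$. Hence the minimum over nonempty $I$ of $\Delta_I$ equals $\min\bigl(2,\,d+1-(d-2)k\bigr)$. This minimum is positive exactly when $k<(d+1)/(d-2)$.

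\emph{Conclusion.} Corollary~\ref{cor:subcriterion} now gives both directions.
\begin{itemize}
\item If $k<(d+1)/(d-2)$, a copy occurs almost surely. The occurrence event is dilation invariant, which is where Lemma~\ref{lem:scaleergodic} enters through Theorem~\ref{thm:trichotomy}(a).
\item If $k\ge(d+1)/(d-2)$, the full label set has $\Delta\le0$, and Proposition~\ref{prop:obstruction} excludes copies almost surely. This covers the critical equality, e.g.\ $k=2$ with $d=5$, where $D=6=3\cdot2$. It also covers copies through the origin.
\end{itemize}

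I expect no real obstacle. The only point to check is that the embedding and the constant-rank hypotheses hold on the whole open domain, and not merely at a point, so that Lemma~\ref{lem:atlasfromchart} applies. This holds because the rank computation is independent of $(a,t)$, the maps being affine.
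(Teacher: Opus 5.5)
Your proposal is correct and follows essentially the same route as the paper. Both arguments check that the single global chart $\Phi_F$ is an embedding with submersive coordinates and constant submap ranks, invoke Lemma~\ref{lem:atlasfromchart}, and conclude via Corollary~\ref{cor:subcriterion} from the excess computation $\Delta_I=d+1-(d-2)\,\#I$. The only cosmetic difference is that you recover $t$ as a norm ratio using $t>0$, whereas the paper uses the inner-product formula $t=\langle x_i-x_j,f_i-f_j\rangle/|f_i-f_j|^2$.
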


\begin{proof}
The full ordered map is an embedding: two labelled coordinates
$x_i=a+tf_i$ and $x_j=a+tf_j$ give
\[
 t=\frac{\langle x_i-x_j,f_i-f_j\rangle}{|f_i-f_j|^2},
 \qquad a=x_i-tf_i,
\]
where $\langle\cdot,\cdot\rangle$ is the Euclidean inner product.  Each
coordinate is a submersion because translation is free, the submap ranks
computed above are constant, and dilation acts on parameters by
$(a,t)\mapsto(\lambda a,\lambda t)$.  Lemma~\ref{lem:atlasfromchart} gives
a rank-regular fixed-rank scale atlas, and Corollary~\ref{cor:subcriterion}
gives the conclusion.
\end{proof}

Explicitly, for each prescribed $F$: in $\R^3$ a positive homothetic copy
occurs almost surely exactly when $2\le k\le3$, four points being critical
since $D=4$; in $\R^4$ exactly when $k=2$; and for $d\ge5$ never.
Progressions have the larger parameter dimension $2d$ because their
direction is also free.  Rotations are not part of a homothety, so the
absence of prescribed two-point homothetic copies for $d\ge5$ does not
conflict with the following remark, in which the direction of the pair is
free.

\begin{remark}[distance set]\label{rem:distance}
Almost surely the distance set $\mathcal D(B):=\{|x-y|:x,y\in B\}$ equals
$[0,\infty)$.  Indeed, $W(0)=0$, $t\mapsto|W(t)|$ is continuous, and
$|W(t)|\to\infty$ by transience \cite[Thm.~3.20]{PM}, so the intermediate
value theorem gives $\{|W(t)-W(0)|:t\ge0\}=[0,\infty)$.  This says nothing
about prescribed gap vectors.
\end{remark}

\subsection{Similar copies}\label{subsec:similar}

Let $F=\{f_1,\dots,f_k\}\subset\R^d$ consist of distinct points and let
$m=\dim\operatorname{aff}F\ge1$.  Orientation-preserving similar copies
have the form
\[
 \Phi_F(a,t,Q)=\bigl(a+tQf_i\bigr)_{i=1}^k,
 \qquad
 (a,t,Q)\in\R^d\times(0,\infty)\times SO(d),
\]
where $O(d)=\{Q:Q^TQ=\mathrm{Id}\}$ is the orthogonal group and
$SO(d)=\{Q\in O(d):\det Q=1\}$.  Translate $F$ so that $f_1=0$ and put
$V=\operatorname{span}F$.  Two parameter triples give the same ordered
configuration if and only if their translations and scales agree and their
rotations agree on $V$: the coordinate labelled by $f_1=0$ gives $a=a'$;
for $f_i\ne0$, $|tQf_i|=|t'Q'f_i|$ and positivity of the scales give
$t=t'$; and the remaining equalities say $Qv=Q'v$ for every $v\in V$.  Thus
the rotations fixing all labelled points form the pointwise stabiliser
\[
 \{(0,1,Q):Q|_V=\mathrm{Id}_V\}\cong SO(d-m),
\]
the rotation group of $V^\perp$ (trivial when $d-m\le1$), of dimension
$\binom{d-m}{2}$.  Taking local coordinates on $SO(d)$ modulo this closed
subgroup removes the redundant rotations, and the similarity family has
dimension
\begin{equation}\label{eq:similarD}
 D_m=d+1+\binom d2-\binom{d-m}{2}
 =d+1+\tfrac12m(2d-m-1).
\end{equation}
Allowing $Q\in O(d)$ adds at most one further orbit, with the same ranks
and the same conclusion; passing to unordered configurations adds only
finite labelling redundancy.

Put $p=d+1+\binom d2$.  At $t=1$ and $Q=\mathrm{Id}$, the unreduced
differential of the $i$-th coordinate is
\[
 \widetilde A_i(u,s,X)=u+sf_i+Xf_i,
 \qquad
 u\in\R^d,\quad s\in\R,\quad X\in\mathfrak{so}(d),
\]
where $\mathfrak{so}(d)=\{X\in\R^{d\times d}:X^T=-X\}$ is the tangent
space of $SO(d)$ at the identity.

\begin{lemma}[rank of a similarity subfamily]\label{lem:simrank}
For a singleton, $r_{\{i\}}=d$.  If $\# I\ge2$ and
$e_I=\dim\operatorname{aff}\{f_i:i\in I\}$, then the corresponding submap
of the nonredundant similarity family has rank
\[
 r_I=p-\binom{d-e_I}{2}
 =d+1+\tfrac12e_I(2d-e_I-1).
\]
In particular, $r_{\{1,\dots,k\}}=D_m$.
\end{lemma}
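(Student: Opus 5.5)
We compute the rank of the unreduced differential first and then pass to the nonredundant family. The rank is the same at every parameter: writing $a+tQf_i$, left multiplication by $Q$ and rescaling by $t>0$ conjugate the differential at $(a,t,Q)$ to the one at $(0,1,\mathrm{Id})$. It therefore suffices to compute at $t=1$, $Q=\mathrm{Id}$ with the maps $\widetilde A_i$ displayed above.

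Since quotienting $SO(d)$ by the closed stabiliser of all labelled points only removes directions in the common kernel of every $\widetilde A_i$, the rank of any submap is the same for the unreduced and the reduced family. The reason is that the quotient map is a submersion and the submaps factor through it. Hence $r_I=p-\dim\ker\widetilde A_I$, where $\widetilde A_I=(\widetilde A_i)_{i\in I}$.

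\emph{Singletons.} Here $\widetilde A_i$ is onto $\R^d$ through the $u$-component, so $r_{\{i\}}=d$.

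\emph{Sets with $\#I\ge2$.} We may translate $F$ so that some $f_{i_0}$ with $i_0\in I$ is the origin. This changes $u$ by $sf_{i_0}+Xf_{i_0}$, a linear change of variables that does not affect the rank. Then $(u,s,X)\in\ker\widetilde A_I$ forces $u=0$ from the label $i_0$. The remaining conditions read $(s+X)v=0$ for every $v\in V_I:=\operatorname{span}\{f_i-f_{i_0}:i\in I\}$, a space of dimension $e_I\ge1$.

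Take $v\in V_I$ with $v\ne0$. Skew-symmetry gives $\langle Xv,v\rangle=0$, so $0=\langle(s+X)v,v\rangle=s|v|^2$ and therefore $s=0$. The kernel is thus $\{X\in\mathfrak{so}(d):X|_{V_I}=0\}$. For such $X$ and $v\in V_I$, skew-symmetry gives $\langle Xw,v\rangle=-\langle w,Xv\rangle=0$, so $X$ maps $V_I^\perp$ into $V_I^\perp$. Hence this kernel is $\mathfrak{so}(V_I^\perp)$, of dimension $\binom{d-e_I}{2}$.

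This gives $r_I=p-\binom{d-e_I}{2}$. The identity $\binom d2-\binom{d-e}{2}=\tfrac12e(2d-e-1)$ is a direct expansion. Taking $I=\{1,\dots,k\}$ gives $e_I=m$ and $r_{\{1,\dots,k\}}=D_m$.

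The only delicate point is the passage to the nonredundant family. Before invoking it, I would check explicitly that the kernel for the full label set, $\mathfrak{so}(V^\perp)$, is exactly the tangent space of the pointwise stabiliser, and that this subspace is contained in the kernel of every submap. Both hold because $V_I\subseteq V$. With these checks in place, rank is preserved for every $I$.
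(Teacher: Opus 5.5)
Your proposal is correct and follows essentially the same route as the paper. Both arguments use skew-symmetry to show that $\ker\widetilde A_I$ consists of $s=0$ and $X$ vanishing on the direction space, giving dimension $\binom{d-e_I}{2}$, and both then note that the redundant rotations (whose tangent space $K_F$ lies in every $K_I$) can be removed without changing the rank. The differences are cosmetic: you translate a point of $I$ to the origin to get $u=0$ where the paper solves $u=-Xf_i$, and you phrase the reduction as factoring through the quotient submersion where the paper restricts to a linear complement $T$ of $K_F$.
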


\begin{proof}
The kernel of $\widetilde A_I$ consists of the triples with
$(s\,\mathrm{Id}+X)(f_i-f_l)=0$ for all $i,l\in I$.  For a nonzero vector
$v$ in the $e_I$-dimensional direction space of the affine span,
$0=\langle(s\,\mathrm{Id}+X)v,v\rangle=s|v|^2$ because $X$ is
antisymmetric, so $s=0$.  Then $X$ vanishes on the direction space, whose
orthogonal complement is invariant by skew-symmetry, and the free block
there is an arbitrary element of $\mathfrak{so}(d-e_I)$; finally
$u=-Xf_i$ for any $i\in I$, independently of $i$ because $X$ kills all
differences $f_i-f_l$.  Hence $\dim\ker\widetilde A_I=\binom{d-e_I}{2}$.
Let $K_F\subseteq K_I$ be the kernels for the full configuration and for
$I$, and let $T$ be a linear complement of $K_F$ in the unreduced tangent
space; $T$ supplies the nonredundant parameter directions.  Since
$K_I=K_F\oplus(K_I\cap T)$,
\begin{align*}
 \operatorname{rank}(D\Phi_I|_T)
 &=\dim T-\dim(K_I\cap T)\\
 &=(p-\dim K_F)-(\dim K_I-\dim K_F)
 =p-\binom{d-e_I}{2}.
\end{align*}
A singleton has rank $d$ because translation is free.
\end{proof}

These local coordinates, with the redundant rotations removed, give
countably many embeddings covering the nonredundant similarity family.  At
a general parameter $(a,t,Q)$, applying $Q^{-1}$ to all target coordinates
and rescaling the tangent variables by $t$ reduces every rank computation
to the one at $(0,1,\mathrm{Id})$, so all submap ranks are constant, and
the free translation makes every coordinate a submersion; dilation sends
$(a,t,Q)$ to $(\lambda a,\lambda t,Q)$.  Lemma~\ref{lem:atlasfromchart}
therefore gives a rank-regular fixed-rank scale atlas.

Every non-singleton subconfiguration has excess at least the full excess.
Let $I$ contain $n\ge2$ points and have affine dimension $e=e_I$.  Adding
the remaining $k-n$ points raises the affine dimension by at most $k-n$,
so $n\le k-(m-e)$.  With $q=d-2$, $\Delta_F=D_m-qk$ and
$r_e=d+1+\frac12e(2d-e-1)$, Lemma~\ref{lem:simrank} gives
$r_e-D_m=-\frac12(m-e)(2d-m-e-1)$, hence
\begin{align*}
 \Delta_I=r_I-qn
 &\ge r_e-q\bigl(k-(m-e)\bigr)
 =\Delta_F+\tfrac12(m-e)(m+e-3)
 \ge\Delta_F ,
\end{align*}
the last term being zero if $e=m$ and nonnegative if $e<m$, since then
$m\ge2$ and $e\ge1$.  Singleton excesses equal $d-(d-2)=2$.

\begin{theorem}[threshold for similar copies]\label{thm:similar}
Fix $d\ge3$ and a finite set $F\subset\R^d$ of $k\ge2$ distinct points with
affine dimension $m\ge1$, and put $D_m=d+1+\frac12m(2d-m-1)$.  The trace
of Brownian motion almost surely contains a similar copy of $F$ if
\[
 k<\frac{D_m}{d-2},
\]
and almost surely contains none if $k\ge D_m/(d-2)$.
\end{theorem}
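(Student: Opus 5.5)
The theorem follows from Corollary~\ref{cor:subcriterion} once we have an atlas and the sign of every excess; both were prepared just above. I would first record that the similarity family of $F$ (orientation-preserving copies, or all of $O(d)$ with the extra orbit) is scale invariant. The dilation $(a,t,Q)\mapsto(\lambda a,\lambda t,Q)$ maps copies to copies. The paragraph after Lemma~\ref{lem:simrank} shows, via Lemma~\ref{lem:atlasfromchart}, that the local coordinates on $\R^d\times(0,\infty)\times SO(d)/SO(d-m)$ give a rank-regular fixed-rank scale atlas. Its effective dimension is $D=D_m$ and its ranks are $r_I=d+1+\frac12e_I(2d-e_I-1)$ for $\#I\ge2$ and $r_{\{i\}}=d$. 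The one point to check is that the quotient coordinates are embeddings. Injectivity holds because two triples give the same ordered configuration only when they differ by the pointwise stabiliser of $V$. Injectivity of the differential on the complement $T$ of $K_F$ is exactly the identity $r_{\{1,\dots,k\}}=D_m$ from Lemma~\ref{lem:simrank}. Reflections contribute at most one further orbit with the same ranks, which is covered by countably many more charts. So the conclusion applies to similar copies with or without reflections.

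Next comes the excess comparison. Singletons have $\Delta_{\{i\}}=2>0$. For $\#I=n\ge2$ with affine dimension $e$, the inequality derived before the theorem gives $\Delta_I\ge\Delta_F+\frac12(m-e)(m+e-3)\ge\Delta_F$, where $\Delta_F=D_m-(d-2)k$. Hence $\min_I\Delta_I>0$ if and only if $\Delta_F>0$, because the full set $I=\{1,\dots,k\}$ is itself one of the $I$ and attains $\Delta_F$ (and $\Delta_F<2$ whenever $\Delta_F\le 0$). In other words, the minimum over nonempty $I$ is positive exactly when $k<D_m/(d-2)$. When $k\ge D_m/(d-2)$ the full configuration is a nonpositive subconfiguration on every chart.

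Corollary~\ref{cor:subcriterion} then gives almost-sure occurrence in the first case and almost-sure absence in the second. The critical case $k=D_m/(d-2)$ is included in the absence statement through Proposition~\ref{prop:obstruction} and the capacity lemma. I expect the main obstacle to be the careful verification that the quotient charts satisfy the hypotheses of Lemma~\ref{lem:atlasfromchart}, namely that every coordinate is a submersion with constant submap ranks on open sets. I would handle this with the equivariance reduction already described: translation by $Q^{-1}$ and rescaling by $t$ move every parameter to $(0,1,\mathrm{Id})$. This makes all ranks constant and equal to those computed in Lemma~\ref{lem:simrank}, and translation freedom gives surjectivity of each $D\phi_i$.
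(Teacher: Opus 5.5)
Your proposal is correct and follows the paper's own proof. The atlas comes from Lemma~\ref{lem:atlasfromchart}, via the equivariant reduction to $(0,1,\mathrm{Id})$ and Lemma~\ref{lem:simrank}; the bound $\Delta_I\ge\Delta_F+\tfrac12(m-e)(m+e-3)\ge\Delta_F$, together with singleton excess $2$, reduces every excess condition to the sign of $\Delta_F=D_m-(d-2)k$; and then Corollary~\ref{cor:subcriterion} gives occurrence while Proposition~\ref{prop:obstruction} gives absence, including the critical case $k=D_m/(d-2)$. Your extra checks, that the quotient charts are embeddings and that the reflected orbit has the same ranks, only make explicit what the paper states in the paragraphs before the theorem.
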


\begin{proof}
If the full excess $D_m-(d-2)k$ is positive, every subconfiguration has
positive excess by the computation above, and
Corollary~\ref{cor:subcriterion} gives occurrence.  If it is nonpositive,
Proposition~\ref{prop:obstruction} gives absence, including at equality.
\end{proof}

Put
\[
 k_{\max}(d,m)=\left\lceil\frac{D_m}{d-2}\right\rceil-1 .
\]
Then each prescribed configuration of affine dimension $m$ with
$m+1\le k\le k_{\max}(d,m)$ almost surely has a similar copy on the trace,
and each prescribed one with $k>k_{\max}(d,m)$ almost surely has none; if
$k_{\max}(d,m)<m+1$, no prescribed configuration of affine dimension $m$
occurs.  The quantifier is for each fixed $F$, with an exceptional null
set depending on $F$; nothing is asserted about all configurations with
more than $k_{\max}(d,m)$ points simultaneously.  The formula contains the
progression threshold: for $m=1$, $D_1=2d$, and the similarity family of a
$k$-term progression is the progression family up to the finite reversal
of its labelling, so a $k$-term progression occurs exactly when
$k<2d/(d-2)$.  In $\R^3$, $D_1=6$ and $D_2=D_3=7$: each prescribed
collinear set of at most five points and each prescribed noncollinear set
of at most six points has a similar copy on the trace, while six collinear
points and seven noncollinear points are the two critical cases.

The maximal-rank condition \eqref{eq:transversality} is stronger than
occurrence.  In $\R^4$, take five affinely independent points: $m=4$,
$D_m=11$, and any three points span a plane, so their submap has rank
\[
 r_2=5+\tfrac12\cdot2(8-2-1)=10<11=\min(3d,D_m)
\]
and \eqref{eq:transversality} fails.  All excesses are nevertheless
positive, the full excess being $11-2\cdot5=1$, so every fixed affinely
independent five-point set in $\R^4$ has a similar copy on the trace almost
surely.

\begin{remark}[corners]\label{rem:corners}
In $\R^3$, the right isosceles corner $\{0,e_1,e_2\}$ has affine dimension
$2$, full rank $7$ and excess $7-3=4$, and every pair has rank $6$ and
excess $6-2=4$; Theorem~\ref{thm:similar} therefore gives a translated,
rotated and uniformly scaled copy on the trace.  The same computation
applies to every fixed noncollinear triangle: an aperture and a side-length
ratio may be prescribed in advance, and the theorem supplies some
orientation of that shape.  The quantifier is for each fixed shape; it does
not assert simultaneous realisation of an uncountable family of shapes.
Prescribing absolute side lengths removes dilation and is not covered by
this scale-invariant theorem.
\end{remark}

\section*{Acknowledgments}
A.~Y. was supported in part by the Natural Sciences and Engineering
Research Council of Canada, NSERC (GR030571 and GR030540).

The authors used ChatGPT to assist with the organisation and exposition of
the manuscript.  All mathematical statements, proofs, references and final
text were reviewed and verified by the authors, who take full
responsibility for the content.

\end{document}